\theoremstyle{plain}
\newtheorem{theorem}{Theorem}
\newtheorem{corollary}[theorem]{Corollary}
\newtheorem{lemma}[theorem]{Lemma}
\newtheorem{proposition}[theorem]{Proposition}
\theoremstyle{definition}
\numberwithin{equation}{section}
\numberwithin{theorem}{section}
\author{Xiaojun Chang \footnote{changxj100@nenu.edu.cn}} \affil{School of Mathematics and Statistics \& Center for Mathematics and Interdisciplinary Sciences,
 Northeast Normal University, Changchun 130024, Jilin,
PR China}
\author{Manting Liu \footnote{liumt679@nenu.edu.cn}} \affil{School of Mathematics and Statistics,
 Northeast Normal University, Changchun 130024, Jilin, PR China}
\author{Duokui Yan \footnote{duokuiyan@buaa.edu.cn}} \affil{School of Mathematical Sciences,
Beihang University, Beijing 100191, PR China }
\title{Positive normalized solutions of Schr\"{o}dinger equations with Sobolev critical growth in bounded domains}
\date{}
\begin{document}

\maketitle

\begin{abstract}
\noindent This paper investigates the existence of positive normalized solutions to the Sobolev critical Schr\"{o}dinger equation:
\begin{equation*}
\left\{
\begin{aligned}
&-\Delta u +\lambda u =|u|^{2^*-2}u \quad &\mbox{in}& \ \Omega,\\
&\int_{\Omega}|u|^{2}dx=c, \quad  u=0 \quad &\mbox{on}& \ \partial\Omega,
\end{aligned}
\right.
\end{equation*}
where $\Omega\subset\mathbb{R}^{N}$ ($N\geq3$) is a bounded smooth domain, $2^*=\frac{2N}{N-2}$, $\lambda\in \mathbb{R}$ is a Lagrange multiplier, and $c>0$ is a prescribed constant. By introducing a novel blow-up analysis for Sobolev subcritical approximation solutions with uniformly bounded Morse index and fixed mass, we establish the existence of mountain pass type positive normalized solutions for
 $N\ge 3$. This resolves an open problem posed in  [Pierotti, Verzini and Yu, SIAM J. Math. Anal. 2025].
\end{abstract}

\medskip

{\small \noindent \text{Key Words:}  Normalized solutions; Sobolev critical growth;
Bounded domains; Blow-up analysis; Variational methods.\\
\text{Mathematics Subject Classification:} 35B33, 35J20, 35J60, 35Q55}

\medskip

\section{Introduction and main results}\label{intro}

In this paper, we investigate the following nonlinear Schr\"{o}dinger equation
\begin{equation}\label{1.1-Lapla}
\left\{
\begin{aligned}
&-\Delta u+ \lambda u=|u|^{2^*-2}u\quad  &\mbox{in}& \ \Omega,\\
& \int_{\Omega}|u|^{2}dx=c, \ u=0 \quad &\mbox{on}& \ \partial\Omega,
\end{aligned}
\right.
\end{equation}
where $\Omega\subset\mathbb{R}^{N}(N\geq3)$ is a bounded smooth domain, $c>0$ is a prescribed constant, and $\lambda\in\mathbb{R}$ is an unknown parameter that acts as a Lagrange multiplier. A function $u\in H_0^1(\Omega)$ that satisfies (\ref{1.1-Lapla}) for some $\lambda\in \mathbb{R}$ is referred to as a normalized solution. These solutions are of particular interest due to their significant physical relevance in various fields, including nonlinear optics and Bose-Einstein condensation. In these applications, the $L^2$-norm of the solution remains invariant under the evolution,  which is of fundamental importance as it reflects the principle of mass conservation.  Moreover, the variational characterization of normalized solutions often provides a powerful tool for analyzing their orbital stability and instability properties. For further details on bounded domains, we refer to \cite{Ag2013,BP2014,FM2001,NTV2015,NTV2019}. The case $\Omega=\mathbb{R}^N$ is addressed in \cite{BC2013,Caze82,JJLV2022,S1,S2}. In addition, normalized solutions also exhibit a deep connection with Mean Field Games systems, see \cite{CCV2024, PPVV2021}.

We introduce the energy functional $J:H^{1}_{0}(\Omega)\to \mathbb{R}$ as follows:
\begin{equation*}
J(u)
:=\frac{1}{2}\int_{\Omega}|\nabla u|^2 dx-\frac{1}{2^*}\int_{\Omega}|u|^{2^*}dx.
\end{equation*}
Then $J$ is of $C^1$, and the critical points of $J$ constrained to the $L^2$-sphere
$$\mathcal{S}_{c}:=\left\{u\in H^{1}_{0}(\Omega): \int_{\Omega}|u|^{2}dx=c \right\}$$
correspond to normalized solutions of (\ref{1.1-Lapla}).

Significant progress has been made in the study of normalized solutions when $\Omega =\mathbb{R}^N$, following Jeanjean's breakthrough work \cite{J}.
 In that work, he pioneered the use of dilations and a mountain pass argument applied to a scaled functional to explore normalized solutions for nonlinear Schr\"odinger equations (NLS) exhibiting $L^2$-supercritical growth. A key innovation in his proof was the identification of a special Palais-Smale (PS) sequence closely linked to the Pohozaev identity. Further developments by Bartsch and Soave \cite{BS1} introduced a natural constraint approach to investigate normalized solutions, showing that
the intersection of $\mathcal{S}_c$ with the Pohozaev set serves as a natural constraint. Building upon these insights, various
variational methods utilizing the Pohozaev identity have been developed to establish the existence of normalized solutions, as detailed in \cite{BM,CLY2023,IT2019,JL,S1} and related references.

In \cite{S2}, Soave employed the Pohozaev manifold decomposition to explore normalized solutions to the NLS in $\mathbb{R}^N$ with mixed nonlinearities of the form $\mu |u|^{q-2}u+|u|^{2^*-2}u$, where $\mu>0$ and $q\in (2, 2^*)$. Specifically, \cite{S2} established the existence of local minimizers for $q\in (2, 2+\frac{4}{N})$ and derived mountain pass type critical points for $q\in [2+\frac{4}{N}, 2^*)$. This work provides a normalized solutions counterpart to the classical Brezis-Nirenberg problem in $\mathbb{R}^N$, with the added challenge of addressing Sobolev critical nonlinearity under an $L^2$-constraint. 

Note that for the $L^2$-subcritical range $q\in (2, 2+\frac{4}{N})$, the geometry of the associated energy functional suggests the existence of a second positive solution at the mountain pass level, a conjecture formally raised by Soave in \cite{S2}. To prove this assertion, Jeanjean and Le \cite{JL2022} developed a non-radial energy estimate to establish a mountain pass type positive normalized solution for $N\ge 4$. Meanwhile,  Wei and Wu \cite{WW2022} employed radial test functions inspired by the Aubin-Talenti extremals to derive new estimates for
$N\ge3$. For additional contributions to this line, we refer the reader to 
\cite{AJM2022,CT2024,JJLV2022,L2021,MS2022} and their references.

The study of normalized solutions within bounded domains is less explored and presents distinct challenges compared to the whole space $\mathbb{R}^N$.  While tools such as dilations and the Pohozaev identity are crucial in the unbounded setting, their application is significantly altered and often unavailable in bounded domains. 
In \cite{NTV2019}, Noris, Tavares, and Verzini utilized Ekeland's variational principle to establish the existence of local minimizers for small mass in the Sobolev critical case. In particular, they proved the following result:
\begin{theorem}[\cite{NTV2019}]\label{Thm-loc-mini}
For $N\geq 3$, there exists $c^{*}>0$ such that for any $0<c<c^{*},$ the problem (\ref{1.1-Lapla}) admits a positive solution that is a local minimizer.
\end{theorem}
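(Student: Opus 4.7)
The plan is to produce $u$ as a constrained local minimizer of $J$ on $\mathcal{S}_c$, trapped in a small $H^1_0$-ball, and to obtain strong compactness of the associated Palais--Smale sequence by keeping the energy below the Brezis--Nirenberg threshold $\tfrac{1}{N}S^{N/2}$, where $S$ denotes the best Sobolev constant. From Sobolev's inequality one has on $\mathcal{S}_c$
\[
J(u)\;\ge\; g(\|\nabla u\|_2^2),\qquad g(t):=\tfrac{1}{2}t-\tfrac{1}{2^*}S^{-2^*/2}t^{2^*/2},
\]
and a direct computation shows that $g$ increases on $[0,\rho_0]$ with $\rho_0:=S^{N/2}$ and $g(\rho_0)=\tfrac{1}{N}S^{N/2}$. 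Introduce
\[
V_c:=\{u\in\mathcal{S}_c:\|\nabla u\|_2^2<\rho_0\},\qquad m(c):=\inf_{V_c} J.
\]

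First I would establish the local-minimum geometry. Testing with $u_c:=\sqrt{c}\,e_1/\|e_1\|_2$, where $e_1$ is the first Dirichlet eigenfunction of $-\Delta$ on $\Omega$, yields $\|\nabla u_c\|_2^2=\lambda_1(\Omega)c$ and
\[
J(u_c)=\tfrac{\lambda_1(\Omega)}{2}c - O(c^{2^*/2})\longrightarrow 0^+\quad\text{as }c\to 0^+,
\]
so $m(c)\to 0$. Choose $c^*>0$ so small that for every $c\in(0,c^*)$ one has $u_c\in V_c$ and $m(c)<\tfrac{1}{N}S^{N/2}\le \inf_{\partial V_c}J$. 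Hence $V_c$ genuinely traps a local minimum level of $J|_{\mathcal{S}_c}$, and Ekeland's variational principle on the $C^1$-Hilbert manifold $\mathcal{S}_c$ delivers a bounded Palais--Smale sequence $(u_n)\subset V_c$ at the level $m(c)$, with Lagrange multipliers $\lambda_n$. Testing the perturbed Euler equation $-\Delta u_n+\lambda_n u_n=|u_n|^{2^*-2}u_n+o(1)$ against $u_n$ and using the uniform bounds on $\|\nabla u_n\|_2$ and $\|u_n\|_{2^*}$ shows $(\lambda_n)$ is bounded, so $\lambda_n\to\lambda\in\mathbb{R}$ along a subsequence.

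Next I would prove strong compactness. With $u_n\rightharpoonup u$, the Struwe-type profile decomposition for Sobolev-critical PS sequences asserts that $u_n-u$ splits into a finite number of rescaled Aubin--Talenti bubbles, each of which carries energy at least $\tfrac{1}{N}S^{N/2}$. Since $m(c)<\tfrac{1}{N}S^{N/2}$, no such bubble can detach, and therefore $u_n\to u$ strongly in $H_0^1(\Omega)$. Consequently $u\in V_c$ solves (\ref{1.1-Lapla}) with multiplier $\lambda$ and $J(u)=m(c)$. For positivity, note that $|u|\in\mathcal{S}_c\cap V_c$ with $J(|u|)=J(u)$, so one may replace $u$ by $|u|\ge 0$, still a local minimizer and hence a weak solution; elliptic regularity gives $u\in C^2(\Omega)\cap C(\overline{\Omega})$, and the strong maximum principle applied to $-\Delta u=(|u|^{2^*-2}-\lambda)u$ forces $u>0$ in $\Omega$.

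The main obstacle is the non-compactness of the embedding $H_0^1(\Omega)\hookrightarrow L^{2^*}(\Omega)$: everything else is routine once one can keep the energy strictly below $\tfrac{1}{N}S^{N/2}$. This is precisely what forces $c$ to be small: the soft bound $m(c)\to 0$ coming from the first-eigenfunction test is the only ingredient that provides an a priori barrier against bubbling, and it is only available for $c<c^*$.
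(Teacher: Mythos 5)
Your proposal is correct and reproduces the approach of the cited reference \cite{NTV2019}, which the paper itself does not reprove (Theorem~\ref{Thm-loc-mini} is quoted as a citation). The closest in-paper analogue is Lemma~\ref{Mini}, which also minimizes $J$ over a ball in $\mathcal{S}_c$ and defers the compactness argument to ``standard arguments as in \cite{NTV2019}.'' The one structural difference is the choice of ball: you minimize over the fixed-radius set $V_c=\{u\in\mathcal{S}_c:\|\nabla u\|_2^2<S^{N/2}\}$, which has the pleasant consequence that $J\ge 0$ on $\overline{V_c}$ (so the weak limit cannot contribute negative energy, and a single bubble already forces $m(c)\ge\tfrac{1}{N}S^{N/2}$); the paper instead uses the mass-dependent ball $\mathcal{D}_{8\lambda_1(\Omega)}=\{u\in\mathcal{S}_c:\|\nabla u\|_2^2\le 8c\lambda_1(\Omega)\}$. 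The fixed radius suffices for bare existence and positivity, which is all Theorem~\ref{Thm-loc-mini} asserts; the mass-dependent ball is what lets Lemma~\ref{Mini} extract the quantitative bound $J(u_1^*)\le c\lambda_1(\Omega)$, which is the point actually used later in the energy-comparison step that distinguishes $\hat u$ from $u_1^*$.
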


Our objective is to investigate the existence of mountain pass type positive normalized solutions of problem (\ref{1.1-Lapla}). However, pursuing such solutions in bounded domains faces inherent obstacles: the failure of dilation techniques, the non-compactness of $H_{0}^{1}(\Omega)\hookrightarrow L^{2^{*}}(\Omega)$, and the complications arising from boundary terms in the Pohozaev identity. 
For star-shaped domains, recent work by Pierotti, Verzini and Yu \cite{PVY}, as well as Song and Zou \cite{SZ2024}, has circumvented these issues by combining Pohozaev-based analysis of bounded (PS) sequences with small mass regimes to identify a second positive normalized solution.

For general bounded domains, Pierotti et al.\cite{PVY} employed the Struwe-Jeanjean monotonicity \cite{J1999, Str1-1988, Str2-1988}, with mass $c$ as a parameter, to construct mountain pass solutions for $N\ge 3$, where $c$ lies in a positive measure subset of the interval $(0, \hat{c}^{*})$ for some threshold $\hat{c}^{*}>0$. Drawing from an idea in \cite{Str3-1988}, they established sharp estimates on the derivative of the mountain pass level. This estimate ensured the $H_0^1$-boundedness and strong convergence of the (PS) sequence $\{u_n\}$. Moreover, a recent result on sign-changing normalized solutions for (\ref{1.1-Lapla}) in general bounded domains was obtained using a descending flow invariant set method with a suitable linking structure. For related work on normalized solutions of NLS in bounded domains within the Sobolev subcritical regime, see \cite{CRZ2025, NTV, PPVV2021, PV} and references therein. 

In Remark 1.8 of \cite{PVY}, it was noted as an open problem that problem (\ref{1.1-Lapla}) in general bounded domains admits a second positive solution at the mountain pass level for $N\ge 3$, provided the mass $c$ lies within the entire small mass interval. The goal of this paper is to resolve this problem.
Our main result is the following.

\begin{theorem}\label{crit-theo-Lapla}
For $N\geq 3$, there exists $c^{**}>0$ such that for any $0<c<c^{**},$ problem (\ref{1.1-Lapla}) admits a second positive normalized solution, which is of mountain pass type.
\end{theorem}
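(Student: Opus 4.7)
The plan is to approximate the Sobolev critical problem (\ref{1.1-Lapla}) by a family of $L^{2}$-supercritical but Sobolev-subcritical problems and then extract a mountain pass normalized solution in the limit via a careful blow-up analysis. For $p\in(2+\tfrac{4}{N},2^{*})$, consider the auxiliary problem
\begin{equation*}
-\Delta u+\lambda u=|u|^{p-2}u\text{ in }\Omega,\quad u=0\text{ on }\partial\Omega,\quad \int_{\Omega}u^{2}\,dx=c.
\end{equation*}
Because $p$ is Sobolev subcritical, the mountain pass machinery of Jeanjean (adapted to bounded domains along the lines of \cite{NTV,PPVV2021}) produces a positive normalized solution $u_{p}$ with Lagrange multiplier $\lambda_{p}$, at a mountain pass level $m_{p}(c)$, and with Morse index at most one on $\mathcal{S}_{c}$.

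The first step is to verify that the levels $m_{p}(c)$ admit uniform bounds, independent of $p$ close to $2^{*}$ and of $c\in(0,c^{**})$, that are strictly below the Br\'ezis-Nirenberg threshold $\tfrac{1}{N}S^{N/2}$, where $S$ is the best Sobolev constant. To do this, I would anchor one endpoint of the mountain pass path at the local minimizer provided by Theorem \ref{Thm-loc-mini}, and build a competitor path by truncating and rescaling an Aubin-Talenti bubble, checking that the mass constraint can be preserved along the path for $c$ small. Sharp expansions in the spirit of \cite{S2,WW2022,JL2022} then yield the desired strict upper bound.

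The core of the argument is the blow-up analysis as $p\nearrow 2^{*}$. Since $\|u_{p}\|_{2}^{2}=c$ is fixed and $J(u_{p})\le m_{p}(c)$ is uniformly bounded, one obtains $H_{0}^{1}$-boundedness of $\{u_{p}\}$, and passing to the limit gives a nonnegative weak solution $u$ of (\ref{1.1-Lapla}) for some limit $\lambda$. Either $u_{p}\to u$ strongly (and we are done) or concentration occurs. By standard rescaling, each concentration point produces a bubble profile solving $-\Delta U=U^{2^{*}-1}$ on $\mathbb{R}^{N}$ or on a half-space; positive solutions on a half-space do not exist, so boundary concentration is excluded. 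The novelty is then to combine the Morse-index-at-most-one property with the fixed mass: by a Devillanova-Solimini/Bahri-Lions style eigenfunction pairing adapted to the constrained setting, the number of interior bubbles is at most one, and the total energy loss is at least $\tfrac{1}{N}S^{N/2}$. Together with the uniform strict energy bound from the first step, this forces strong convergence. The limit $u$ is a positive critical point of $J|_{\mathcal{S}_{c}}$ at the mountain pass level, and it is distinct from the local minimizer since its energy exceeds that of the minimizer.

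The main obstacle I anticipate is precisely the blow-up analysis for the constrained problem. In contrast to the unconstrained Br\'ezis-Nirenberg setting, one must simultaneously track energy, mass, and the multiplier $\lambda_{p}$, and must rule out the scenario in which a bubble with vanishing $L^{2}$-mass nonetheless absorbs enough energy to block compactness. Controlling this requires pointwise decay estimates on $u_{p}$ away from the concentration set and a careful use of the Pohozaev-type identity on $\Omega$ (with boundary terms), plus a delicate passage to the limit in $\lambda_{p}$ to ensure $\lambda>0$. Making the Morse index bound quantitatively interact with the mass constraint, rather than only with the energy, is the genuinely new ingredient on which the whole argument rests.
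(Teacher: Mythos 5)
You have the right overall framework—Sobolev-subcritical approximation, Morse index control, blow-up analysis—and you correctly identify that the essential new ingredient must make the Morse index bound interact with the \emph{mass} constraint rather than only with the energy. However, two of your central steps would fail, and this is precisely why the paper's mechanism is different. First, there is no known estimate placing the constrained mountain pass level strictly below the Br\'ezis--Nirenberg threshold $\tfrac{1}{N}S^{N/2}$ on a general bounded domain; anchoring one endpoint of the path at the local minimizer (whose energy is only of order $c\lambda_1(\Omega)$) gives no gap of the right size, and the paper never establishes nor uses such a comparison. The open problem in \cite{PVY} is open precisely because the threshold argument is unavailable. Second, you propose a Pohozaev identity on $\Omega$ with boundary terms to control $\lambda_p$ and rule out vanishing-mass bubbles; the boundary term $\int_{\partial\Omega}|\partial_\nu u|^2(x\cdot\nu)\,d\sigma$ has a favorable sign only on star-shaped domains (the route of \cite{PVY,SZ2024}), so on a general bounded domain this step collapses, and indeed the paper explicitly abandons it.

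What the paper does instead: it introduces a weighted quantity
\[
\Theta_n^k(x)=\big(d_n^k(x)\big)^{\frac{2}{p_n-2}(2-\frac{1}{N})}\,\hat\epsilon_n^{-\frac{2}{p_n-2}(1-\frac{1}{N})}\,u_{p_n}(x),
\]
with $d_n^k$ the distance to the at most two concentration points (the Morse index bound is $m(u_{p_n})\le 2$, not $\le 1$ as you wrote; the index two is what allows two peaks and forces the careful double-bubble analysis), and proves $\Theta_n^k\le C$ by converting each putative extra peak into an extra negative direction for $Q_{\lambda_n,u_{p_n}}$. The exponents are calibrated so that the resulting pointwise decay of $u_{p_n}$, integrated against the fixed mass (concretely, $\|u_{p_n}\|_m^m\ge c^{m/2}|\Omega|^{1-m/2}$ with $m=2^*(1-\tfrac{3}{8N})$), contradicts $\lambda_n\to+\infty$. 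This is how the mass constraint, rather than an energy threshold, excludes the unbounded-multiplier scenario; strong convergence then follows from profile decomposition combined with the resulting uniform $L^\infty$ bound, again without any Br\'ezis--Nirenberg comparison. Your ``Devillanova--Solimini pairing'' instinct is the correct high-level idea, but the quantitative implementation via $\Theta_n^k$ and the $L^m$ mass estimate is the missing step on which everything hinges.
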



Our strategy combines a Sobolev subcritical approximation scheme with a novel blow-up analysis tailored to bounded domains. We begin by considering a sequence of approximation solutions with uniformly bounded Morse indices on the $L^2$-sphere for the Sobolev subcritical problems. This sequence is derived using an abstract parameterized minimax principle from \cite{BCJS1}, which incorporates Morse index information for constraint functionals, along with a corresponding blow-up analysis for the solution sequence of the approximation problem, as discussed in \cite{EP,PVY}. This methodology has recently been successfully applied to study the existence of mountain pass type normalized solutions for $L^2$-supercritical but Sobolev subcritical NLS or Kirchhoff equations in bounded domains \cite{BQZ,CRZ2025,PVY, WC2024}, and for $L^2$-supercritical NLS on metric graphs \cite{BCJS,CJS2024}. However, it cannot be directly extended to the Sobolev critical problem, both in terms of recovering compactness for the bounded (PS) sequence of the associated parameterized problem and performing blow-up analysis for the solution sequence of the approximation problem.

To address the Sobolev critical issue, we instead focus on the sequence of approximation solutions for the Sobolev subcritical problems. The primary challenge lies in the blow-up analysis of this sequence. Traditional blow-up analyses fail in this case due to the energy concentration phenomena arising from the critical exponent $2^*$. To overcome this, we introduce a new blow-up function to analyze the subcritical approximation solutions, thereby facilitating the establishment of uniform boundedness. Furthermore, we employ a refined profile decomposition framework from \cite{GLW, T2007} to prove strong convergence in $H_0^1(\Omega)$. By comparing the mountain pass energy with that of the local minimizer from Theorem \ref{Thm-loc-mini}, we show that the limiting function constitutes the second positive normalized solution of mountain pass type.

More precisely, for any $p\in (2, 2^*)$, we consider the energy functional $J_p:H^{1}_{0}(\Omega)\to \mathbb{R}$ defined by
\begin{equation*}
J_p(u)
:=\frac{1}{2}\int_{\Omega}|\nabla u|^2 dx-\frac{1}{p}\int_{\Omega}|u|^{p}dx.
\end{equation*}
 According to Proposition \ref{subcrit-theo-Lapla}, there exists a sequence $\{u_{p_n}\}\subset\mathcal{S}_{c}$  with $p_n\to 2^*$ satisfying (\ref{1.1-Lapla})
for some $\{\lambda_n\}\subset\mathbb{R}$.  A notable feature of $\{u_{p_n}\}$ is that it possesses a uniformly bounded Morse index, i.e., $m(u_{p_n})\le 2$, and a fixed mass, i.e., $\|u_{p_n}\|_2^2=c$. A direct computation yields
\begin{equation*}
\|\nabla u_{p_{n}}\|_{2}^{2}=\frac{2p_{n}}{p_{n}-2}\left(c_{1,p_{n}}+\frac{\lambda_{n}}{p_{n}}c\right),
\end{equation*}
where $c_{1,p_n}=J_{p_n}(u_{p_{n}})$.
Since $\Omega$ is a general bounded domain, we cannot establish an upper bound estimate for $\{c_{1,p_{n}}\}$ by means of the Pohozaev identity as in \cite{L2021}. Nevertheless, by restricting the range of the mass $c$, we observe that as $p_n\to 2^*$, the functionals $J_{p_n}$ and $J$ share a common mountain pass path set. This allows us to apply the Lebesgue dominated convergence theorem to derive the desired upper bound estimate, as discussed in Section 2.
Consequently, to prove that $\{u_{p_{n}}\}$ is bounded in $H_0^1(\Omega)$, it suffices to find an upper bound for $\{\lambda_n\}$.

The difficulty in bounding $\{\lambda_n\}$ through blow-up analysis arises from obtaining uniform decay estimates for solution sequences away from their local maxima. While comparison principles (\cite{BCJS, CJS2024, PV, WC2024}) generally suffice for subcritical cases, they fail in the Sobolev critical regime. To overcome this, inspired by \cite{P2013}, we introduce a specialized blow-up function $\Theta_n^k(x)$ designed for the Sobolev critical exponent.  Unlike classical blow-up techniques \cite{EP,NTV}, our scaled sequences do not achieve their maxima at the origin but maintain uniform boundedness in arbitrary domains, with the origin emerging as a local maximum in the limiting functions. 
By careful selecting test functions and their supports, we establish uniform Morse index bounds for the limit functions, thereby deriving global decay estimates for $\Theta_n^k(x)$. These estimates eventually provide the desired upper bound for $\{\lambda_n\}$, see Section 3.

Once the boundedness of $\{u_n\}$ is ascertained, we proceed to show that $\{u_{p_n}\}$ strongly converges to some $\hat u\in H_0^1(\Omega)$. To restore compactness in the Sobolev critical setting, we refine the profile decomposition from \cite{GLW} and combine it with our uniform boundedness result. This avoids relying on local Pohozaev identities and dimensional constraints as in \cite{GLW}, allowing us instead to use direct estimates on the term $\int_{D_n}u_{p_n}^2\phi_n dx$ (with $D_n$ and $\phi_n$ defined in Section 3) to derive a contradiction. This ultimately leads to the existence of positive solutions to (\ref{1.1-Lapla}), which is distinguished from the local minimizer of the functional $J$ through energy comparison. The energy gap then validates the existence of mountain pass type positive normalized solutions.

The paper is organized as follows. Section 2 establishes some preliminary results, focusing particularly on energy estimates for Sobolev subcritical approximation solutions and the local minimum solution. Section 3 offers a blow-up analysis and demonstrates that the Lagrangian multiplier sequence is bounded. In Section 4, we provide the proof of Theorem \ref{crit-theo-Lapla}.

Throughout this paper, we adopt the following notations.
\begin{itemize}
  \item The symbols $C,$ $C_{i}$ denote various positive constants that may vary from line to line.
    \item For $N\in \mathbb{N}$, we define $\mathbb{R}^{N}_{+}:=\{x=(x_{1},\cdots, x_{N})\in \mathbb{R}^N: x_{N}>0\}$ and  $\mathbb{R}^{N}_{-}:=\mathbb{R}^{N}\setminus\mathbb{R}^{N}_{+}$.
  \item For any $r>0$ and $z\in\mathbb{R}^{N},$ $B_{r}(z)$ denotes the open ball of radius $r$ centered at $z$. For simplicity, we write $B_{r}:= B_{r}(0)$. 
   \item For any $r>0$ and $z\in\mathbb{R}^{N},$  we define $B_r^+(z):=B_r(z)\cap \mathbb{R}^N_+$. For simplicity, we write $B_{r}^+:= B_{r}^{+}(0)$.

  \item For $1\leq p\leq \infty$, the $L^p$-norm of a function $u\in L^{p}(\Omega)$ is denoted by $\|u\|_{p}.$   
\end{itemize}

\section{Preliminary}

In this section, we present several preliminary results.  We begin by recalling the Gagliardo-Nirenberg inequality in bounded domains (see \cite{N-1966}):
For every $N\geq 3$ and $p\in(2,2^*)$, there exists a constant $C_{N,r,\Omega}$ depending on $N$, $r$ and $\Omega$ such that
\begin{eqnarray*}
\|u\|_r\leq C_{N,r,\Omega}\|u\|_2^{1-\gamma_r}\|\nabla u\|_{2}^{\gamma_r},~~\forall u\in H_0^1(\Omega),
\end{eqnarray*}
where $\gamma_r:=\frac{N(r-2)}{2r}$. Moreover, as shown in \cite{NTV2019}, this inequality holds for any bounded domain $\Omega$ with the same constant $C_{N,p}$, where $C_{N,p}$ denotes the sharp constant in the Galiardo-Nirenberg inequality within $\mathbb{R}^N$.

We now proceed with the following result concerning Sobolev subcritical problems.

\begin{proposition}\label{subcrit-theo-Lapla}
Assume that $p\in(2+\frac{4}{N},2^{*})$ and $\Omega\subset\mathbb{R}^N(N\ge3)$ is a bounded smooth domain.Then there exists a constant  $c^*_p>0$ such that for any $c\in (0, c^*_p)$, the following problem
\begin{equation}\label{4.1-Lapla}
\left\{
\begin{aligned}
 & -\Delta u+\lambda u=u^{p-1} \quad &\mbox{in}& \ \Omega,\\
 & \int_{\Omega}|u|^{2}dx=c,\ \  u=0 \quad &\mbox{on}& \ \partial\Omega
\end{aligned}
\right.
\end{equation}
has a positive normalized solution $u\in H_0^1(\Omega)$ for some $\lambda\in \mathbb{R}$, which is of mountain pass type.
Moreover, the Morse index of $u$, denoted by $m(u)$, satisfies $m(u)\leq 2,$ where $m(u)$ is defined as
\begin{equation*}
m(u):=\sup\{dim W: W\subset H_{0}^{1}(\Omega),\forall \phi\in W\setminus \{0\},Q_{\lambda,u}(\phi)<0\},
\end{equation*}
and the quadratic form $Q_{\lambda,u}$ is given by
\begin{equation*}
Q_{\lambda,u}(\phi):=\int_{\Omega}|\nabla \phi|^{2}dx+\int_{\Omega}(\lambda-(p-1)u^{p-2})\phi^{2}dx.
\end{equation*}
\end{proposition}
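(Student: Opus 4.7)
The plan is to apply the parameterized minimax principle from \cite{BCJS1}, which produces Palais--Smale sequences with bounded Morse index for constrained variational problems. I would work on $\mathcal{S}_c$ with the functional $J_p$, and exploit the Sobolev subcriticality $p<2^*$ to recover strong compactness once boundedness is in hand.

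\textbf{Mountain pass geometry.} First I would establish that for small $c>0$ the restriction $J_p|_{\mathcal{S}_c}$ has a strict mountain pass geometry. Using the Gagliardo--Nirenberg inequality $\|u\|_p^p\le C_{N,p}c^{(1-\gamma_p)p/2}\|\nabla u\|_2^{p\gamma_p}$ together with $p\gamma_p>2$ (which is exactly the $L^2$-supercriticality $p>2+4/N$), one checks that the function $t\mapsto t^2/2-C c^{(1-\gamma_p)p/2}t^{p\gamma_p}/p$ has a positive local maximum for small $c$. This gives an $H^1_0$-level set $\{\|\nabla u\|_2=R_0\}\cap\mathcal{S}_c$ on which $J_p\ge\alpha_0>0$. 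The local minimizer $u_0$ from (the subcritical analog of) Theorem \ref{Thm-loc-mini} sits inside this sphere with $J_p(u_0)<\alpha_0$, while a far point $v_1\in\mathcal{S}_c$ with $J_p(v_1)<\alpha_0$ is built by taking a smooth bump $\varphi$ compactly supported in $\Omega$ with $\|\varphi\|_2^2=c$ and using the concentration $v_1(x)=t^{N/2}\varphi(tx-x_0)$ for $t$ large (legal since the support shrinks into $\Omega$); since $p\gamma_p>2$, $J_p(v_1)\to-\infty$ as $t\to\infty$. This defines the mountain pass value $c_{1,p}(c):=\inf_{\gamma\in\Gamma_c}\max_{s\in[0,1]}J_p(\gamma(s))\ge\alpha_0$ on paths in $\mathcal{S}_c$ joining $u_0$ to $v_1$.

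\textbf{Parameterized minimax with Morse index.} Treating $c$ as a parameter and applying the abstract theorem of \cite{BCJS1}, I obtain, for almost every $c$ in the admissible interval, a Palais--Smale sequence $\{v_n\}\subset\mathcal{S}_c$ for $J_p|_{\mathcal{S}_c}$ at level $c_{1,p}(c)$ whose Morse index on $\mathcal{S}_c$ is at most one, which translates into the unconstrained bound $m(v_n)\le 2$ by incorporating the constraint direction. The same parameterized framework delivers, via almost-everywhere differentiability of $c\mapsto c_{1,p}(c)$, a quantitative control on the sequence of Lagrange multipliers $\lambda_n$, which plays the role of the Pohozaev identity that is not at our disposal on a general bounded domain.

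\textbf{Boundedness, strong convergence, positivity.} Combining this control with the standard energy identity $\frac{1}{2}\|\nabla v_n\|_2^2-\frac{1}{p}\|v_n\|_p^p\to c_{1,p}(c)$ and the Gagliardo--Nirenberg inequality (with $p<2^*$), I would deduce that $\{v_n\}$ is bounded in $H^1_0(\Omega)$. Because the embedding $H^1_0(\Omega)\hookrightarrow L^p(\Omega)$ is compact in the subcritical range, a subsequence converges strongly in $L^p$; passing to the limit in the equation upgrades this to strong convergence in $H^1_0(\Omega)$ and yields a normalized solution $u\in\mathcal{S}_c$ at level $c_{1,p}(c)$ with $m(u)\le 2$ by lower semicontinuity of the Morse index under strong convergence. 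Positivity is standard: $J_p(|u|)=J_p(u)$ shows $|u|$ is also a minimax critical point, and the strong maximum principle applied to $-\Delta|u|+\lambda|u|=|u|^{p-1}$ gives $|u|>0$ in $\Omega$, so we may take $u>0$. Finally, to produce a uniform constant $c^*_p$ valid for every $c$ in an interval rather than almost every $c$, one argues by monotonicity/approximation: the set of admissible $c$ obtained above is dense in $(0,c^*_p)$, and a diagonal procedure combined with the boundedness argument extends the conclusion to all such $c$.

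\textbf{Main obstacle.} The decisive step is obtaining a \emph{bounded} Palais--Smale sequence in the $L^2$-supercritical regime on a general bounded domain, where the Pohozaev identity carries unfavorable boundary terms and the scaling tricks of Jeanjean \cite{J} are unavailable. The parameterized minimax principle of \cite{BCJS1}, which extracts the needed additional identity from the derivative of $c_{1,p}(c)$ and simultaneously preserves the Morse index bound, is the key technical device that makes this step go through.
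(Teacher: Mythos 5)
Your proposal and the paper's proof diverge at the choice of parameter for the Struwe--Jeanjean/BCJS1 parameterized minimax, and this is not a cosmetic difference. You propose to treat the mass $c$ itself as the parameter, which is the approach of \cite{PVY}: it yields bounded Palais--Smale sequences and multiplier control for \emph{almost every} $c$, and you then try to recover every $c$ in an interval by a ``density/approximation/diagonal'' argument. The paper instead fixes $c$ and introduces an auxiliary coefficient $\rho\in[\tfrac12,1]$ in front of the nonlinearity, working with $J_{\rho,p}(u)=\tfrac12\|\nabla u\|_2^2-\tfrac{\rho}{p}\|u\|_p^p$. The BCJS1 theorem then gives, for almost every $\rho$, a bounded nonnegative Palais--Smale sequence on $\mathcal{S}_c$; subcritical compactness yields a mountain pass solution $u_\rho>0$ with $m(u_\rho)\le2$ for a.e.\ $\rho$, and finally one sends $\rho_n\to1^-$. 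Because $c$ is held fixed throughout, the conclusion is obtained for \emph{every} $c\in(0,c_p^*)$ with no a.e.-in-$c$ residue to remove.

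The gap in your argument is precisely the step you leave vague. The passage from a.e.\ $c$ to all $c$ (or equivalently, for the paper, the passage $\rho_n\to1^-$) requires a uniform $H^1_0\times\mathbb{R}$ bound on the solution pairs $(u_{c_n},\lambda_n)$ (resp.\ $(u_{\rho_n},\lambda_n)$) that does \emph{not} come from the a.e.\ differentiability of the minimax level, since that information is only available off a null set. The paper supplies this via the pointwise blow-up analysis of \cite{EP,PV} for Morse-index-bounded, fixed-mass sequences of subcritical solutions; that is the decisive compactness tool, and you never invoke it. Without it, your concluding ``diagonal procedure combined with the boundedness argument'' has no content: you have boundedness only for the parameter values already covered by the a.e.\ statement. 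A further, smaller inaccuracy: you take the local minimizer $u_0$ as one mountain-pass endpoint, whereas the paper uses explicit rescaled eigenfunctions $w_1,w_2$ that are simultaneously admissible for all $\rho\in[\tfrac12,1]$ and all $p$ near $2^*$; this uniformity in the endpoints (Lemma~\ref{MP-La-1}) is what makes the comparison of levels $c_{\rho,p}$ across $\rho$ (and later $c_{1,p_n}$ vs.\ $c_{1,2^*}$) go through, and it is not automatic if the endpoints depend on $p$ or on the minimizer.
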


The result described above is essentially derived through the monotonicity arguments in \cite{CJS2024} and the blow-up analysis developed in \cite{EP}. See also \cite{PV,PVY}.
For the sake of clarity and to facilitate future applications in proving Theorem \ref{crit-theo-Lapla}, we provide a brief outline of the proof below.

Fix $p\in (2+\frac{4}{N}, 2^*)$.
Consider the parameterized functional $J_{\rho,p}: H_0^1(\Omega)\to \mathbb{R}$ defined by
\begin{equation*}
J_{\rho,p}(u):=\frac{1}{2}\|\nabla u\|^{2}_{2}-\frac{\rho}{p}\int_{\Omega}|u|^{p}dx, \quad  \forall \rho\in\left[\frac{1}{2},1\right], u\in H_0^1(\Omega).
\end{equation*}
We first show that $J_{\rho,p}$ exhibits a mountain pass geometry on $\mathcal{S}_{c}$ uniformly with respect to $\rho\in[\frac{1}{2},1]$. 
Set 
\begin{eqnarray*}\label{m-1}
c^*_p:=2^{-N}\bigg(\frac{p}{C_{N,p}^p}\bigg)^{\frac{2}{p-2}}\left(\frac{1}{2\lambda_1(\Omega)}\right)^{\frac{N}{2}-\frac{2}{p-2}},
\end{eqnarray*}
where $\lambda_{1}(\Omega)$ denotes the first Dirichlet eigenvalue of $(-\Delta,H^{1}_{0}(\Omega))$.

\begin{lemma}\label{MP-La-1} For any $c\in(0,c^{*}_{p}),$ there exist $w_{1}, w_{2}\in \mathcal{S}_{c}$, independent of $\rho$ and $p$, such that
\begin{equation*}
c_{\rho,p}:=\inf_{\gamma\in\Gamma}\max_{t\in[0,1]}J_{\rho,p}\big(\gamma(t)\big)>\max\left\{J_{\rho,p}(w_{1}), J_{\rho,p}(w_{2})\right\}, \quad \forall \rho\in\left[\frac{1}{2},1\right],
\end{equation*}
where
\begin{equation*}
\Gamma:=\left\{\gamma\in C([0,1],\mathcal{S}_{c}) :\gamma(0)=w_{1}, \gamma(1)=w_{2}\right\}.
\end{equation*}
\end{lemma}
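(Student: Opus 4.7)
The plan is to extract a mountain pass structure on $\mathcal{S}_c$ from the Gagliardo-Nirenberg inequality, exploiting that $p>2+4/N$ gives $p\gamma_p>2$ with $\gamma_p=N(p-2)/(2p)$. I will exhibit two points $w_1,w_2\in\mathcal{S}_c$, of small and large gradient norm respectively, such that the intermediate gradient sphere carries a positive barrier uniformly in $\rho\in[1/2,1]$. For $u\in\mathcal{S}_c$, the Gagliardo-Nirenberg inequality with the sharp constant $C_{N,p}$ together with $\rho\le 1$ yield
\begin{equation*}
J_{\rho,p}(u)\;\ge\;\tfrac{1}{2}\|\nabla u\|_2^{2}-\tfrac{C_{N,p}^p}{p}\,c^{\,p(1-\gamma_p)/2}\,\|\nabla u\|_2^{\,p\gamma_p}\;=:\;g(\|\nabla u\|_2),
\end{equation*}
and, thanks to $p\gamma_p>2$, the function $g$ is positive for small arguments, attains a positive maximum, and tends to $-\infty$ at infinity. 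A direct algebraic check shows that the explicit value of $c_p^{\ast}$ is calibrated precisely so that, for every $c\in(0,c_p^{\ast})$, there is a radius $R>0$ (independent of $\rho$, of order $\sqrt{c\lambda_1(\Omega)}$) satisfying $R^{2}>c\lambda_1(\Omega)$ and $g(R)>\tfrac12 c\lambda_1(\Omega)$.

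For $w_1$ I take the normalized first Dirichlet eigenfunction: $w_1:=\sqrt{c}\,e_1$ with $\|e_1\|_2=1$ and $-\Delta e_1=\lambda_1(\Omega)e_1$. This lies in $\mathcal{S}_c$, is independent of $\rho,p$, has $\|\nabla w_1\|_2^{2}=c\lambda_1(\Omega)<R^{2}$, and satisfies $J_{\rho,p}(w_1)\le\tfrac12 c\lambda_1(\Omega)<g(R)$. For $w_2$ I fix once and for all a test function $u_0\in C_c^{\infty}(\Omega)\cap\mathcal{S}_c$ (independent of $\rho,p$) and use the inward dilation $u_{0,t}(x):=t^{N/2}u_0(tx)$ for $t\ge 1$: since $\supp(u_{0,t})=t^{-1}\supp(u_0)\subset\Omega$, one has $u_{0,t}\in\mathcal{S}_c$, and
\begin{equation*}
J_{\rho,p}(u_{0,t})\;=\;\tfrac{t^{2}}{2}\|\nabla u_0\|_2^{2}-\tfrac{\rho}{p}\,t^{\,p\gamma_p}\|u_0\|_p^{\,p}\;\longrightarrow\;-\infty
\end{equation*}
as $t\to\infty$, uniformly in $\rho\in[1/2,1]$ (using $\rho\ge 1/2$ and $p\gamma_p>2$). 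Choosing $T\gg 1$ such that $\|\nabla u_{0,T}\|_2>R$ and $J_{\rho,p}(u_{0,T})<0<g(R)$, and setting $w_2:=u_{0,T}$, yields the second point; by the continuity of $p\mapsto\|u_0\|_p^{\,p}$ one can in fact make this choice uniform in $p$ in a neighborhood of $2^{\ast}$.

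Finally, every $\gamma\in\Gamma$ is a continuous path in $\mathcal{S}_c$ joining $w_1$ (with $\|\nabla w_1\|_2<R$) to $w_2$ (with $\|\nabla w_2\|_2>R$), and so must cross the gradient sphere $\{u\in\mathcal{S}_c:\|\nabla u\|_2=R\}$ at some parameter $t^{\ast}$. The barrier then gives
\begin{equation*}
\max_{t\in[0,1]}J_{\rho,p}(\gamma(t))\;\ge\;J_{\rho,p}(\gamma(t^{\ast}))\;\ge\;g(R)\;>\;\max\{J_{\rho,p}(w_1),J_{\rho,p}(w_2)\},
\end{equation*}
and taking the infimum over $\gamma\in\Gamma$ yields the claimed inequality, uniformly in $\rho\in[1/2,1]$. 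The only substantive step is the barrier estimate: verifying that the threshold $c_p^{\ast}$ is indeed tight enough to force $g(R)$ strictly above the eigenfunction level $\tfrac12 c\lambda_1(\Omega)$. This is a short but carefully calibrated algebraic computation in which the explicit form of $c_p^{\ast}$ is used exactly once; everything else is a standard path-linking argument.
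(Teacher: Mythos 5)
Your proposal is correct and follows the same route as the paper: both proofs pick $w_1$ to be a small--gradient $L^2$-normalized eigenfunction, pick $w_2$ to be a large--gradient dilate supported well inside $\Omega$, use the sharp Gagliardo--Nirenberg inequality (with $p\gamma_p>2$) to produce a positive barrier on an intermediate gradient sphere, and close with the standard ``every path must cross the sphere'' argument. The only genuine difference is cosmetic: the paper uses a single one--parameter family $\phi_\alpha$ (rescalings of the zero--extension of $\varphi_1$) and takes $w_1=\phi_{2\lambda_1(\Omega)}$, $w_2=\phi_{\alpha_0}$ with $\alpha_0$ large, whereas you use $\sqrt{c}\,e_1$ directly for $w_1$ and a dilate of a generic compactly supported $u_0$ for $w_2$. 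Either choice works and gives functions independent of $\rho$ and (near $2^*$) of $p$.

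Two small points you should tighten. First, as written your dilation $u_{0,t}(x)=t^{N/2}u_0(tx)$ shrinks $\supp u_0$ toward the origin, so it stays inside $\Omega$ only if you center the dilation at an interior point $x_0$ and take $\supp u_0$ inside a small ball $B_r(x_0)\subset\Omega$; this is exactly what the paper's $\phi_\alpha$ does (scaling about a chosen $x_0\in\Omega$). Second, the barrier inequality $g(R)>\tfrac12 c\lambda_1(\Omega)$ with $R^2$ of order $c\lambda_1(\Omega)$ for all $c<c_p^*$ is precisely the computation encoded in the definition of $c_p^*$ in the paper (where the intermediate sphere is $\mathcal{B}_{8\lambda_1(\Omega)}$ and the barrier value is $2c\lambda_1(\Omega)$); you defer this as a calibration check, but it is the one step that actually uses the explicit form of $c_p^*$, so in a complete write--up it should be carried out rather than asserted.
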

\begin{proof}
Let $\varphi_1$ be the corresponding normalized eigenfunction associated with $\lambda_1(\Omega)$ such that $\|\varphi_1\|_{2}^2=1$. For $\alpha> \lambda_{1}(\Omega),$ we define the sets
\begin{equation*}\label{def-set}
  \mathcal{D}_{\alpha}:=\left\{u\in\mathcal{S}_{c}:\|\nabla u\|_{2}^{2}\leq c \alpha\right\}
\quad \mbox{and} \quad
  \mathcal{B}_{\alpha}:=\left\{u\in\mathcal{S}_{c}:\|\nabla u\|_{2}^{2}=c \alpha\right\}.
\end{equation*}
It is easily seen that $\sqrt{c}\varphi_1\in  \mathcal{D}_{\alpha}$. For $\alpha>0$, we introduce the function
\[
\phi_\alpha(x):=\sqrt{c}\left(\frac{\alpha}{\lambda_1(\Omega)}\right)^{\frac{N}{4}}\tilde{\varphi}_1\left(\sqrt{\frac{\alpha}{\lambda_1(\Omega)}}(x-x_0)\right), \forall x\in \Omega,
\]
where $x_0\in\Omega$, $\tilde{\varphi}_1$ is the zero extension of $\varphi_1$ to $\mathbb{R}^N$.
Clearly, there exists $\bar{\alpha}>\lambda_1(\Omega)$ such that $\phi_{\alpha}\in  \mathcal{B}_{\alpha}$ for all $\alpha\geq\bar{\alpha}$.

Let $w_1=\phi_{2\lambda_1(\Omega)}$. 
Then, for any $\rho\in [\frac{1}{2},1]$ and $c\in(0,c_p^*)$, we deduce
\begin{eqnarray}\label{9-24-1}
J_{\rho,p}(w_1)
\leq\frac{1}{2}\int_{\Omega}\vert\nabla w_1\vert^2dx<2c\lambda_1(\Omega)\le  \inf_{u\in B_{8\lambda_1(\Omega)}}J_{\rho,p}(u).
\end{eqnarray}
Similarly, since $p\in(2+\frac{4}{N},2^{*})$, we can choose sufficiently large $\alpha_0>0$ such that, letting $w_2=\phi_{\alpha_0},$ we have
\begin{equation}\label{9-24-2}
\|\nabla w_2\|_{2}^{2}>16c \lambda_{1}(\Omega) \quad  \mbox{and} \quad  J_{\rho,p}(w_2) \leq J_{\frac{1}{2},p}(w_2)<0, \quad \forall \rho\in\left[\frac{1}{2},1\right].
\end{equation}
By continuity, for any $\gamma\in\Gamma,$ there exists $t_{\gamma}\in[0,1]$ such that $\gamma(t_{\gamma})\in\mathcal{B}_{8\lambda_1(\Omega)},$ which implies
\begin{equation*}
\begin{aligned}
\max\limits_{t\in[0,1]}J_{\rho,p}(\gamma(t))\geq J_{\rho,p}(\gamma(t_{\gamma}))
\geq\inf_{u\in \mathcal{B}_{8\lambda_1(\Omega)}}J_{\rho,p}(u)\ge 2c\lambda_1(\Omega).
\end{aligned}
\end{equation*}
Since $\gamma\in\Gamma$ is arbitrary, by (\ref{9-24-1})-(\ref{9-24-2}), we conclude that
\begin{equation*}
c_{\rho,p}\geq
2c\lambda_1(\Omega) >\max\{J_{\rho,p}(w_1),J_{\rho,p}(w_2)\}.
\end{equation*}
\end{proof}

\begin{proof}[Proof of Proposition \ref{subcrit-theo-Lapla}]
By Lemma \ref{MP-La-1} and applying Theorem 1.5 from \cite{BCJS1}, for almost every $\rho \in \left[\frac{1}{2}, 1\right]$, we obtain a bounded nonnegative Palais-Smale sequence $\{u_{n,\rho}\}\subset H_0^1(\Omega)\times \mathbb{R}$ for $J_{\rho,p}$ constrained on $\mathcal{S}_{c}.$ Since the embedding $H^{1}_{0}(\Omega)\hookrightarrow L^{r}(\Omega)$ is compact for $r\in[1,2^*)$, it follows that $u_{n,\rho}\to u_{\rho}$ in $H^{1}_{0}(\Omega)$ as $n\to+\infty$.
By the strong maximum principle, we conclude that $u_{\rho}>0.$
Moreover, following similar arguments as in \cite{CJS2024} (see also \cite{CRZ2025}), we have $m(u_{\rho})\leq 2.$

Next, we consider a sequence $\rho_n \rightarrow 1^-$ and the corresponding sequence $\{u_{\rho_n}\}\subset H_0^1(\Omega)$ of mountain pass type critical points of $J_{\rho_n,p}$ on $\mathcal{S}_c$ at the level $c_{\rho_n, p}$, with Morse index $m(u_{\rho_n}) \leq 2$. Let $\{\lambda_n\}\subset \mathbb{R}$ denote the associated Lagrange multipliers.
Employing the blow-up analysis as in \cite{EP,PV}, we infer that $\{(u_{\rho_n}, \lambda_n)\}$ is bounded in $H_0^1(\Omega)\times \mathbb{R}$. Thus, by similar arguments as above, there exists $(u_0, \lambda_0)\in H_0^1(\Omega)\times \mathbb{R}$ that is a mountain pass type positive normalized solution to (\ref{4.1-Lapla}).
\end{proof}


Set
\begin{equation*}
c^{*}_{2^*}:=2^{-N}(2^*S^{\frac{2^*}{2}})^{\frac{2}{2^*-2}}\frac{1}{2\lambda_1(\Omega)},
\end{equation*}
where $S$ denotes the sharp constant in the Sobolev inequality within $\mathbb{R}^N$. 
By similar arguments as in Lemma \ref{MP-La-1}, we have the following result.
\begin{lemma}\label{MP-Lapla2} For any $c\in(0,c^{*}_{2^*}),$ we have
\begin{equation*}
c_{1,2^*}:=\inf_{\gamma\in\Gamma}\max_{t\in[0,1]}J\big(\gamma(t)\big)>\max\left\{J(w_{1}), J(w_{2})\right\},
\end{equation*}
where $w_{1}$ and $w_{2}$ are defined in Lemma \ref{MP-La-1}.
\end{lemma}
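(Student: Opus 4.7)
The strategy is to mimic the proof of Lemma \ref{MP-La-1} essentially verbatim, replacing the Gagliardo--Nirenberg inequality (which was used there to control the subcritical nonlinearity) with the sharp Sobolev inequality $\|u\|_{2^*}^2 \le S^{-1}\|\nabla u\|_2^2$. I will take $w_1 = \phi_{2\lambda_1(\Omega)}$ and $w_2 = \phi_{\alpha_0}$ exactly as constructed in the previous lemma, enlarging $\alpha_0$ further if necessary (this does not affect the conclusions of Lemma \ref{MP-La-1}). The plan is to show (a) $\max\{J(w_1), J(w_2)\} < 2c\lambda_1(\Omega)$, and (b) $\inf_{\mathcal{B}_{8\lambda_1(\Omega)}} J > 2c\lambda_1(\Omega)$ whenever $c<c^*_{2^*}$. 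Since $\|\nabla w_1\|_2^2 = 2c\lambda_1(\Omega) < 8c\lambda_1(\Omega) < 16c\lambda_1(\Omega) < \|\nabla w_2\|_2^2$, continuity of $t\mapsto \|\nabla\gamma(t)\|_2^2$ forces every $\gamma\in\Gamma$ to cross $\mathcal{B}_{8\lambda_1(\Omega)}$, and combining (a)--(b) will yield the claimed mountain pass gap.

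For step (a), the bound on $J(w_1)$ is immediate from the definition of $J$:
\[
J(w_1) \leq \tfrac12 \|\nabla w_1\|_2^2 = c\lambda_1(\Omega) < 2c\lambda_1(\Omega).
\]
For $w_2 = \phi_{\alpha_0}$, a direct scaling computation with $y = \sqrt{\alpha/\lambda_1(\Omega)}(x-x_0)$ yields
\[
J(\phi_\alpha) = \frac{c\alpha}{2} - \frac{c^{2^*/2}}{2^*} \left(\frac{\alpha}{\lambda_1(\Omega)}\right)^{\!N/(N-2)} \|\tilde{\varphi}_1\|_{2^*}^{2^*},
\]
and since $N/(N-2) > 1$ for $N\ge 3$, the right-hand side tends to $-\infty$ as $\alpha\to\infty$. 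Thus, by enlarging $\alpha_0$ if needed, I secure $J(w_2) < 0 < 2c\lambda_1(\Omega)$ while keeping $\|\nabla w_2\|_2^2 > 16c\lambda_1(\Omega)$.

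The core computation is step (b). For $u\in\mathcal{B}_{8\lambda_1(\Omega)}$, the sharp Sobolev inequality gives $\|u\|_{2^*}^{2^*} \le S^{-2^*/2}(8c\lambda_1(\Omega))^{2^*/2}$, whence
\[
J(u) \geq 4c\lambda_1(\Omega) - \frac{1}{2^*} S^{-2^*/2}\bigl(8c\lambda_1(\Omega)\bigr)^{2^*/2}.
\]
Using the identity $\tfrac{2^*}{2}-1 = \tfrac{2}{N-2}$ together with
$c^*_{2^*} = 2^{-N}(2^*S^{2^*/2})^{2/(2^*-2)}/(2\lambda_1(\Omega))$, a short algebraic check shows that for every $c<c^*_{2^*}$,
\[
\tfrac{1}{2^*} S^{-2^*/2}(8c\lambda_1(\Omega))^{2^*/2} < 2c\lambda_1(\Omega),
\]
so $J(u) > 2c\lambda_1(\Omega)$ uniformly on $\mathcal{B}_{8\lambda_1(\Omega)}$. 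This algebraic verification is precisely what the threshold $c^*_{2^*}$ has been calibrated to make work.

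Putting the pieces together, for any $\gamma\in\Gamma$ the intermediate value theorem produces $t_\gamma\in[0,1]$ with $\gamma(t_\gamma)\in\mathcal{B}_{8\lambda_1(\Omega)}$, so $\max_{t\in[0,1]} J(\gamma(t)) \ge J(\gamma(t_\gamma)) > 2c\lambda_1(\Omega) > \max\{J(w_1), J(w_2)\}$; taking the infimum over $\Gamma$ concludes the proof. I anticipate no substantial obstacle, since the critical exponent enters only through a single power computation on $\mathcal{B}_{8\lambda_1(\Omega)}$; the Sobolev inequality replaces Gagliardo--Nirenberg cleanly, and the rest of the structural argument (endpoint estimates, IVT crossing of an intermediate sphere) is identical to the subcritical case.
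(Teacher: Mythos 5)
Your proposal is correct and follows essentially the same route the paper gestures at (the paper only states ``By similar arguments as in Lemma \ref{MP-La-1}''): replace the Gagliardo--Nirenberg bound by the sharp Sobolev inequality on the intermediate sphere $\mathcal{B}_{8\lambda_1(\Omega)}$, and verify that $c<c^{*}_{2^*}$ is exactly the threshold making $\frac{1}{2^*}S^{-2^*/2}(8c\lambda_1(\Omega))^{2^*/2}<2c\lambda_1(\Omega)$. Your algebraic check is accurate, and the remark about possibly enlarging $\alpha_0$ (only $J(w_2)<2c\lambda_1(\Omega)$ is needed, not $J(w_2)<0$) correctly addresses the one point where the subcritical construction does not automatically transfer.
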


Now, let $p_{n}\in (2+\frac{4}{N},2^{*})$ be a sequence such that $p_{n}\to 2^{*}$ as $n\to+\infty$. According to \cite{B2002}, it follows that the sequence $\{c_{p_n}^*\}$ is bounded both above and below, with a positive lower bound. Consequently, for any $c\in(0,\inf c_{p_n}^*)$, by Proposition \ref{subcrit-theo-Lapla}, there exists a sequence of solution pair $\{(u_{p_{n}}, \lambda_n)\}\subset \mathcal{S}_{c}\times \mathbb{R}$ with $u_{p_n}>0$ such that $m(u_{p_n})\le 2$, and these pairs solve the equation:
\begin{equation}\label{C1-Lapla}
  -\Delta u_{p_{n}}+\lambda_{n} u_{p_{n}}= u_{p_{n}}^{p_{n}-1}, \quad x\in\Omega.
\end{equation}
Furthermore, we have
\begin{equation*}\label{2.1}
  c_{1,p_n}=\inf\limits_{\gamma\in\Gamma}\max\limits_{t\in[0,1]}J_{p_n}(\gamma(t))\geq 2c\lambda_1(\Omega) 
\end{equation*}
and
\begin{equation}\label{C6-Lapla}
\|\nabla u_{p_{n}}\|_{2}^{2}=\frac{2p_{n}}{p_{n}-2}\left(c_{1,p_{n}}+\frac{\lambda_{n}c}{p_{n}}\right).
\end{equation}


Set $c^{**}:=\min\{\inf c_{p_n}^*, c^{*}_{2^*}\}$.
We can show that the sequence $\{c_{1,p_n}\}$ is bounded above by $c_{1,2^{*}}$. 

\begin{lemma}\label{AB-Lapla}
For any $c\in (0, c^{**})$, we have $\limsup\limits_{n\to+\infty}c_{1,p_{n}}\leq c_{1,2^{*}}$.
\end{lemma}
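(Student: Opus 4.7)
The plan is to exploit the fact that, by Lemma \ref{MP-La-1}, the mountain pass class $\Gamma$ is the same for all exponents $p \in (2+\frac{4}{N}, 2^*]$, because the endpoints $w_1, w_2$ are chosen independently of $p$ (and of $\rho$). Given $\varepsilon > 0$, select $\gamma^* \in \Gamma$ with $\max_{t \in [0,1]} J(\gamma^*(t)) \le c_{1, 2^*} + \varepsilon$. Since $\gamma^* \in \Gamma$, one has $c_{1, p_n} \le \max_{t \in [0,1]} J_{p_n}(\gamma^*(t))$, so it suffices to prove that
\[
\limsup_{n \to \infty}\, \max_{t \in [0,1]} J_{p_n}(\gamma^*(t)) \,\le\, \max_{t \in [0,1]} J(\gamma^*(t)),
\]
after which letting $\varepsilon \to 0^+$ yields the conclusion.

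For the core step, I would pick $t_n \in [0,1]$ achieving $\max_{t} J_{p_n}(\gamma^*(t))$; passing to a subsequence, $t_n \to t^* \in [0,1]$, and continuity of $\gamma^* \colon [0,1] \to H_0^1(\Omega)$ yields $\gamma^*(t_n) \to \gamma^*(t^*)$ strongly in $H_0^1(\Omega)$, hence also in $L^{2^*}(\Omega)$ by the Sobolev embedding. I would then show $J_{p_n}(\gamma^*(t_n)) \to J(\gamma^*(t^*))$. The Dirichlet term converges by strong $H_0^1$-convergence. For the nonlinear term I would invoke Vitali's convergence theorem: extracting a further subsequence with a.e.\ convergence, joint continuity gives $|\gamma^*(t_n)|^{p_n} \to |\gamma^*(t^*)|^{2^*}$ almost everywhere; the elementary bound $|u|^{p_n} \le 1 + |u|^{2^*}$ (valid for $p_n \in (2,2^*]$) together with the uniform integrability of $\{|\gamma^*(t_n)|^{2^*}\}$, inherited from its $L^1$-convergence under strong $L^{2^*}$-convergence, produces uniform integrability of $\{|\gamma^*(t_n)|^{p_n}\}$. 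Hence $\int_\Omega |\gamma^*(t_n)|^{p_n}\, dx \to \int_\Omega |\gamma^*(t^*)|^{2^*}\, dx$, and chaining the estimates gives
\[
\limsup_{n \to \infty} c_{1, p_n} \,\le\, \limsup_{n \to \infty} J_{p_n}(\gamma^*(t_n)) \,=\, J(\gamma^*(t^*)) \,\le\, \max_{t \in [0,1]} J(\gamma^*(t)) \,\le\, c_{1, 2^*} + \varepsilon.
\]

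The only subtle point, and the main obstacle, is that the exponent $p_n$ itself varies with $n$, so the standard continuity of $u \mapsto \int_\Omega |u|^p\, dx$ under strong convergence at a fixed $p$ does not apply directly. The Vitali argument, made possible by the uniform domination $|u|^{p_n} \le 1 + |u|^{2^*}$ and by the uniform integrability transferred from $\{|\gamma^*(t_n)|^{2^*}\}$, is what resolves this; the structural ingredient that permits the comparison in the first place is the $p$-independence of the path class $\Gamma$ coming from Lemma \ref{MP-La-1}.
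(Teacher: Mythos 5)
Your proposal is correct and follows essentially the same approach as the paper: both exploit the $p$-independence of the path class $\Gamma$ (from Lemma \ref{MP-La-1}), select a near-optimal path for $J$, and show that $\max_t J_{p_n}$ and $\max_t J$ along that fixed path differ by $o_n(1)$. The only difference is technical: where the paper invokes dominated convergence (via Young's inequality $|\gamma_\epsilon(t)|^{p_n}\le |\gamma_\epsilon(t)|^2+|\gamma_\epsilon(t)|^{2^*}$) to get continuity of $p\mapsto \tfrac1p\int|\gamma_\epsilon(t)|^p$ uniformly in $t$, you instead pass to a convergent subsequence of maximizers $t_n$ and apply Vitali's theorem, which is equally valid and arguably makes the uniformity-in-$t$ step more explicit.
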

\begin{proof}
For any $\epsilon>0$, by the definition of $c_{1,2^{*}}$, there exist $\gamma_{\epsilon}\in\Gamma$ and $t_{2^{*}}\in[0,1]$ such that
\begin{equation*}\label{14-1}
c_{1,2^{*}}\geq \max\limits_{t\in[0,1]}J(\gamma_{\epsilon}(t))-\epsilon=J(\gamma_{\epsilon}(t_{2^{*}}))-\epsilon.
\end{equation*}
By applying the Young's inequality, we obtain
\begin{equation*}
|\gamma_{\epsilon}(t)|^{p_{n}}
\leq \frac{2^{*}-p_{n}}{2^{*}-2}|\gamma_{\epsilon}(t)|^{2}+\frac{p_{n}-2}{2^{*}-2}|\gamma_{\epsilon}(t)|^{2^{*}}
\leq |\gamma_{\epsilon}(t)|^{2}+|\gamma_{\epsilon}(t)|^{2^{*}}, \quad \forall t\in[0,1].
\end{equation*}
Next, using the Lebesgue dominated convergence theorem, we deduce that the function
\begin{equation*}
  \frac{1}{p}\int_{\Omega}|\gamma_{\epsilon}(t)|^{p}dx
\end{equation*}
is continuous with respect to $p\in(2+\frac{4}{N},2^{*}]$ uniformly for $t\in[0,1]$. Since $p_{n}\to 2^{*}$,  we can choose $n$ sufficiently large such that
\begin{eqnarray*}\label{8-7-1}
|J_{p_{n}}(\gamma_{\epsilon}(t))-J(\gamma_{\epsilon}(t))|
=\left|\frac{1}{2^{*}}\int_{\Omega}|\gamma_{\epsilon}(t)|^{2^{*}}dx-\frac{1}{p_{n}}\int_{\Omega}|\gamma_{\epsilon}(t)|^{p_{n}}dx\right|<\epsilon, \quad \forall t\in[0,1],
\end{eqnarray*}
which implies that
\begin{equation*}
c_{1,p_{n}}\le \max\limits_{t\in[0,1]}J_{p_{n}}(\gamma_{\epsilon}(t))
\le \max\limits_{t\in[0,1]}J(\gamma_{\epsilon}(t))+\epsilon
\le c_{1,2^{*}}+2\epsilon.
\end{equation*}
Since $\epsilon>0$ is arbitrary, we get that the conclusion holds.
\end{proof}
Together with (\ref{C6-Lapla}) and Lemma \ref{AB-Lapla}, we deduce that
\begin{corollary}\label{lower bound}
There exists a constant $C_0>0$ independent of $n$ such that $\lambda_n\ge -C_0$ for all $n$.
\end{corollary}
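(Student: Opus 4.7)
The plan is to read off the lower bound on $\lambda_n$ directly from the identity (\ref{C6-Lapla}) combined with the upper bound on the mountain-pass levels $c_{1,p_n}$ provided by Lemma \ref{AB-Lapla}. The main obstacles (existence of the approximating solutions, mountain-pass structure, the bound $c_{1,p_n}\le c_{1,2^*}+o(1)$) have already been dealt with; what remains is a short algebraic manipulation.

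First I would use the nonnegativity of $\|\nabla u_{p_n}\|_2^2$. Since $p_n\in(2+4/N,2^*)$, the prefactor $\frac{2p_n}{p_n-2}$ in (\ref{C6-Lapla}) is strictly positive, and therefore
\begin{equation*}
c_{1,p_n}+\frac{\lambda_n c}{p_n}\ge 0,
\end{equation*}
which rearranges to $\lambda_n\ge -\dfrac{p_n\,c_{1,p_n}}{c}$. This is the only place where the equation enters the argument.

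Next I would bound the right-hand side independently of $n$. Since $p_n\to 2^*$, we certainly have $p_n\le 2^*$ for all $n$ (or at worst for $n$ large, which is enough). Combined with Lemma \ref{AB-Lapla}, which yields $\limsup_{n\to\infty} c_{1,p_n}\le c_{1,2^*}<\infty$, the sequence $\{p_n c_{1,p_n}\}$ is bounded from above by some constant depending only on $N$, $\Omega$ and $c_{1,2^*}$. Consequently
\begin{equation*}
\lambda_n\ge -\frac{p_n c_{1,p_n}}{c}\ge -C_0
\end{equation*}
for a constant $C_0>0$ depending only on $c$, $N$ and $c_{1,2^*}$, and independent of $n$. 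This finishes the argument; because the statement reduces to a one-line rearrangement of (\ref{C6-Lapla}), I do not anticipate any technical obstacle beyond ensuring that $c$ lies in the regime $(0,c^{**})$ where Lemma \ref{AB-Lapla} applies.
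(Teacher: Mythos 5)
Your argument is correct and is essentially the same one the paper intends (the paper simply states that the corollary follows "together with (\ref{C6-Lapla}) and Lemma \ref{AB-Lapla}" without spelling out the rearrangement). Using the nonnegativity of $\|\nabla u_{p_n}\|_2^2$ and the positivity of $\tfrac{2p_n}{p_n-2}$ to read off $\lambda_n \ge -\tfrac{p_n c_{1,p_n}}{c}$, and then invoking $p_n < 2^*$ together with $\limsup_n c_{1,p_n} \le c_{1,2^*}$ to bound the right-hand side, is exactly the intended one-line proof.
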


To prove Theorem  \ref{crit-theo-Lapla}, it is vital to distinguish between the local minimizer of $J$ and the positive solution obtained via the subcritical approximation procedure. This distinction will be achieved by comparing the energy levels of these two solutions. The following result provides an upper bound on the energy of the local minimizer. 
\begin{lemma}\label{Mini}
For any $0<c<c_{2^*}^{*}$, the functional $J$ admits a local minimizer $u_{1}^{*}$ on $\mathcal{S}_{c}$ such that
\begin{equation}\label{D30}
J(u_{1}^{*})\leq c\lambda_1(\Omega).
\end{equation}
\end{lemma}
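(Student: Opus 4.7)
The plan is to realize $u_1^*$ as a constrained minimizer of $J$ on the closed ``inner region''
\[
\mathcal{A} := \left\{u \in \mathcal{S}_c : \|\nabla u\|_2^2 \le 8c\lambda_1(\Omega)\right\},
\]
and then to verify both that the minimizer lies strictly in the $H_0^1$-interior of $\mathcal{A}$ (so that it is a genuine local minimizer on $\mathcal{S}_c$) and that its energy is bounded above by $c\lambda_1(\Omega)$. The main obstacle is the failure of compactness for the embedding $H_0^1(\Omega)\hookrightarrow L^{2^*}(\Omega)$, which I plan to handle via a Brezis--Lieb splitting in which the smallness condition $c<c^{*}_{2^*}$ guarantees that the residual has nonnegative energy.

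First, I would carry out the Sobolev analogue of Lemma \ref{MP-La-1}. Using the sharp Sobolev inequality $S\|u\|_{2^*}^2\le \|\nabla u\|_2^2$ yields the pointwise lower bound
\[
J(u) \ge \frac{t}{2}\left(1-\frac{2}{2^*}\,S^{-2^*/2}\, t^{(2^*-2)/2}\right), \qquad t:=\|\nabla u\|_2^2.
\]
Plugging in $t\le 8c\lambda_1(\Omega)$ together with the explicit value of $c^{*}_{2^*}$ (and the identity $(2^*-2)/2 = 2/(N-2)$), a direct computation shows that the bracket is $> 1/2$ whenever $c<c^{*}_{2^*}$. This gives simultaneously the nonnegativity $J\ge 0$ on $\mathcal{A}$ and the mountain-pass style barrier $\inf_{\|\nabla u\|_2^2 = 8c\lambda_1(\Omega)} J(u) > 2c\lambda_1(\Omega)$. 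For the upper bound, the test function $w_1=\phi_{2\lambda_1(\Omega)}$ from Lemma \ref{MP-La-1} lies in $\mathcal{A}$ and satisfies $J(w_1)\le \frac{1}{2}\|\nabla w_1\|_2^2 = c\lambda_1(\Omega)$, so $\inf_{\mathcal{A}} J \le c\lambda_1(\Omega)$.

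The crucial and delicate step is to attain this infimum. I would pick a minimizing sequence $\{v_n\}\subset \mathcal{A}$, which is bounded in $H_0^1(\Omega)$, and pass to a weak limit $u_1^*$. Rellich's theorem forces $\|u_1^*\|_2^2=c$, so $u_1^*\in\mathcal{S}_c$, and weak lower semicontinuity places $u_1^*$ in $\mathcal{A}$. The $L^{2^*}$ defect is then treated via the Brezis--Lieb identity together with the orthogonality of weak convergence for gradients,
\[
J(v_n) = J(u_1^*) + J(v_n-u_1^*) + o(1), \qquad \|\nabla(v_n-u_1^*)\|_2^2 \le 8c\lambda_1(\Omega) + o(1).
\]
Applying the Sobolev lower bound above to the residual $v_n-u_1^*$ yields $J(v_n-u_1^*)\ge o(1)$, precisely because $c<c^{*}_{2^*}$, so $J(u_1^*)\le \lim_n J(v_n) = \inf_{\mathcal{A}} J$, and $u_1^*$ attains the infimum.

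Finally, combining the upper bound $J(u_1^*)\le c\lambda_1(\Omega)$ with the boundary barrier $\inf_{\|\nabla u\|_2^2=8c\lambda_1(\Omega)} J > 2c\lambda_1(\Omega)$ forces $\|\nabla u_1^*\|_2^2 < 8c\lambda_1(\Omega)$; hence $u_1^*$ sits in the $H_0^1$-interior of $\mathcal{A}$ and is therefore a local minimizer of $J$ on $\mathcal{S}_c$, establishing \eqref{D30}.
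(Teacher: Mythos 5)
Your proof is correct and follows essentially the same route as the paper's: minimize $J$ over the closed sublevel set $\mathcal{D}_{8\lambda_1(\Omega)}$, obtain the upper bound $c\lambda_1(\Omega)$ from a test function with $\|\nabla u\|_2^2=2c\lambda_1(\Omega)$, use the Sobolev inequality and the choice of $c^*_{2^*}$ to build a strict barrier on $\mathcal{B}_{8\lambda_1(\Omega)}$, and conclude the minimizer is interior. The only difference is that the paper defers the attainment step to ``standard arguments as in [NTV2019],'' whereas you spell out the Brezis--Lieb splitting explicitly --- which is exactly the mechanism those standard arguments rely on.
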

\begin{proof}
Set
$$m_{\mathcal{D}}:=\inf\limits_{u\in \mathcal{D}_{8\lambda_1(\Omega)}}J(u),~~~~
m_{\mathcal{B}}:=\inf\limits_{u\in \mathcal{B}_{8\lambda_1(\Omega)} }J(u).$$
We claim that, for $c\in(0,c_{2^*}^{*})$, 
\begin{equation}\label{D28}
m_{\mathcal{D}}<m_{\mathcal{B}}.
\end{equation}
In fact, noting $\mathcal{B}_{2\lambda_1(\Omega)}\subset\mathcal{D}_{8\lambda_1(\Omega)}$, we directly infer that
$m_{\mathcal{D}}\leq c\lambda_1(\Omega)$.
For any $u\in \mathcal{B}_{8\lambda_1(\Omega)},$ by the Sobolev inequality and the definition of $c_{2^*}^{*}$, we obtain
\begin{align*}
  J(u)
\geq 4c\lambda_1(\Omega)-\frac{1}{2^*}S^{-\frac{2^*}{2}}(8c\lambda_1(\Omega))^{\frac{^{2^{*}}}{2}}>2c\lambda_1(\Omega).
\end{align*}
Since $u\in \mathcal{B}_{8\lambda_1(\Omega)}$ is arbitrary, we get
$
m_{\mathcal{B}}\geq 2c\lambda_1(\Omega).
$
Hence, (\ref{D28}) holds. Therefore, there exists a bounded nonnegative minimizing sequence $\{u_{n}^{*}\}\subset \mathcal{D}_{8\lambda_1(\Omega)}\setminus\mathcal{B}_{8\lambda_1(\Omega)}$ such that $\lim\limits_{n\to\infty}J(u_{n}^{*})=m_{\mathcal{D}}$. By standard arguments as in \cite{NTV2019}, it follows that $u_{n}^{*}\to u_{1}^{*}$ in $H_{0}^1(\Omega)$ as $n\to+\infty.$ 
Then, we deduce that
$J$ admits a local minimizer $u_1^*\in \mathcal{D}_{8\lambda_1(\Omega)}\setminus\mathcal{B}_{8\lambda_1(\Omega)}$
and $(u_{1}^*,\lambda_{u_1^*})$ solves (\ref{1.1-Lapla}) for some $\lambda_{u_1^*}\in\mathbb{R}$. Furthermore, $J(u_{1}^{*})=m_{\mathcal{D}}\leq c\lambda_1(\Omega)$.
This completes the proof.
\end{proof}

\section{Blow-up analysis}
In this section, we conduct a blow-up analysis for the sequence 
 $\{u_{p_{n}}\}$.  We aim to establish the following theorem:
  \begin{theorem}\label{above-zero}
The sequence $\{\lambda_n\}$ is bounded from above.
\end{theorem}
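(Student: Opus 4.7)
The plan is to argue by contradiction: suppose that along a subsequence $\lambda_n\to+\infty$. By Lemma \ref{AB-Lapla} and the identity \eqref{C6-Lapla}, this forces $\|\nabla u_{p_n}\|_2\to+\infty$, so $u_{p_n}$ must concentrate at finitely many points of $\overline{\Omega}$. My goal is to turn this concentration into a contradiction with the Morse-index bound $m(u_{p_n})\le 2$ from Proposition \ref{subcrit-theo-Lapla}, through a blow-up analysis tailored to the Sobolev critical regime.

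First I would isolate concentration points $\{x_n^k\}_{k=1}^{K_n}\subset \overline{\Omega}$ with diverging heights $M_n^k:=u_{p_n}(x_n^k)\to+\infty$, set $\mu_n^k:=(M_n^k)^{-(p_n-2)/2}$, and introduce
\[
\Theta_n^k(y):=(M_n^k)^{-1}u_{p_n}\!\left(x_n^k+\mu_n^k\, y\right),\qquad \Omega_n^k:=(\mu_n^k)^{-1}(\Omega-x_n^k),
\]
which turns \eqref{C1-Lapla} into
\[
-\Delta\Theta_n^k+\lambda_n(\mu_n^k)^{2}\,\Theta_n^k=(\Theta_n^k)^{p_n-1}\quad\text{in }\Omega_n^k.
\]
The scale invariance of $Q_{\lambda_n,u_{p_n}}$ transfers the bound $m(u_{p_n})\le 2$ to a uniform Morse-index bound for $\Theta_n^k$ on every fixed ball. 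The main obstacle -- and the novel contribution of the paper -- is proving that $\Theta_n^k$ is uniformly bounded on compact sets. In the Sobolev subcritical setting of \cite{EP,NTV,PV} this is routine via elliptic comparison, but the critical exponent $2^*$ breaks comparison because of Aubin--Talenti-scale concentration. Motivated by \cite{P2013}, I would therefore \emph{not} take $x_n^k$ to be a global maximum of $u_{p_n}$; instead I would select it by a Vitali-type procedure driven by the values of $u_{p_n}$, arranged so that $\Theta_n^k$ stays uniformly bounded on every compact subset of $\mathbb{R}^N$, with the origin becoming only an asymptotic local maximum (rather than the maximum) of the limit profile $\Theta^k$.

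Granted this uniform boundedness, $\Theta_n^k$ converges in $C^1_{\mathrm{loc}}$ to a nontrivial nonnegative solution $\Theta^k$ of
\[
-\Delta\Theta^k+\beta^k\Theta^k=(\Theta^k)^{2^*-1}
\]
in either $\mathbb{R}^N$ or $\mathbb{R}^N_+$ (with zero Dirichlet condition in the latter), where $\beta^k\ge 0$ is a sub-limit of $\lambda_n(\mu_n^k)^2$; the half-space case is excluded by an Esteban--Lions-type Pohozaev identity. Using cutoff test functions with disjoint supports centered at the $x_n^k$ and inserted into $Q_{\lambda_n,u_{p_n}}$, I would bound $K_n$ by $m(u_{p_n})\le 2$ and deduce that each $\Theta^k$ has Morse index at most $2$. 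Classical finite-Morse-index theory for the critical equation then yields the global decay $\Theta_n^k(y)\le C(1+|y|)^{-(N-2)}$ uniformly in $n$ and $k$. Reinserting this decay into the mass constraint $\int_\Omega u_{p_n}^2\,dx=c$ and into the inequality $\lambda_n\le (M_n^k)^{p_n-2}$ obtained by evaluating \eqref{C1-Lapla} at the maximum of $u_{p_n}$, one should be led to an upper bound on $\lambda_n$ incompatible with $\lambda_n\to+\infty$. The delicate step throughout is the Vitali-type selection of $x_n^k$ and the verification of the compact-set bound on $\Theta_n^k$ without any comparison principle; once that is in place, the Morse-index analysis and the decay argument follow a relatively standard pattern.
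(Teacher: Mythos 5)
Your proposal captures the correct high-level architecture — contradiction from $\lambda_n\to+\infty$, blow-up at concentration points, transfer of the Morse-index bound to the rescaled profiles, bounding the number of bubbles by $m(u_{p_n})\le 2$ via disjointly supported test functions, and a final contradiction from the mass constraint combined with a uniform decay estimate. This is essentially the skeleton of the paper's proof. However, there is a real gap in the middle, and it is precisely the part you dismiss as ``following a relatively standard pattern.''

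The gap is the claim that ``classical finite-Morse-index theory for the critical equation then yields the global decay $\Theta_n^k(y)\le C(1+|y|)^{-(N-2)}$ \emph{uniformly in $n$ and $k$}.'' Finite-Morse-index classification gives this decay for the \emph{limit} profile $\Theta^k$, but that does not transfer to the approximating sequence $\Theta_n^k$ on unbounded sets: a secondary bubble may sit at a scale $\sigma_n\to\infty$ in the $y$-variable, invisible on any fixed compact, and destroy any fixed-rate uniform decay. Controlling exactly this is what the paper's Theorem~\ref{L-criti-blow1} and its two-step proof are devoted to, and it is the technically novel contribution of the section. The paper does it by working with a distance-weighted version of $u_{p_n}$ (their $\Theta_n^k(x)=d_n^k(x)^{\frac{2}{p_n-2}(2-\frac1N)}\hat\epsilon_n^{-\frac{2}{p_n-2}(1-\frac1N)}u_{p_n}(x)$, defined on $\Omega$, not in rescaled coordinates): if its supremum diverges, a maximizer is a new concentration point, a rescaling around it converges (after a careful case analysis to rule out half-space limits and the scenario $\lambda_n(\hat\epsilon_n^{(2)})^2\to\tilde\lambda>0$) to a solution of $-\Delta\hat U_2=\hat\delta\hat U_2^{2^*-1}$ with $\hat U_2(0)=1$, which produces a third negative direction for $Q_{\lambda_n,u_{p_n}}$ and breaks $m(u_{p_n})\le2$. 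That chain of reductions is not routine, and your proposal leaves it entirely implicit inside the invocation of ``classical theory.''

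Two further, more minor, discrepancies. First, the decay rate you claim, exponent $N-2$, is the optimal Aubin--Talenti rate; the paper only proves (and only needs) the strictly weaker rate $\frac{2}{p_n-2}(2-\frac1N)\to(N-2)\bigl(1-\frac{1}{2N}\bigr)$, which falls out naturally from the scaling of the weighted blow-up function. Aiming for the optimal rate uniformly in $n$ is a stronger and probably harder statement than what is needed. Second, the final contradiction is not really ``an upper bound on $\lambda_n$'': once $\lambda_n\to+\infty$ forces $\|u_{p_n}\|_\infty\to+\infty$ and hence $\hat\epsilon_n\to0$, the contradiction is between the mass lower bound $\int_\Omega|u_{p_n}|^m\,dx\ge c^{m/2}|\Omega|^{1-m/2}$ (for a well-chosen $m\in(2,2^*)$) and the upper bound produced by the decay estimate and the vanishing of $\hat\epsilon_n$; $\lambda_n$ itself never reappears in the closing estimate. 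This is a presentational point rather than an error, but your statement ``one should be led to an upper bound on $\lambda_n$'' misdescribes the mechanism.
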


To prove this, we assume by contradiction that
\begin{equation*}\label{lambda}
 \lambda_{n}\to+\infty~~\mbox{as}~~ n\to+\infty.
\end{equation*}
From (\ref{C1-Lapla}), we have  $\lambda_{n} \|u_{p_{n}}\|_{\infty}^{-(p_{n}-2)}\leq 1$. Then $\|u_{p_{n}}\|_{\infty}\to +\infty$ as $n\to+\infty$.

To derive a contradiction, we introduce a blow-up function $\Theta_n^k(x)$ and show that it satisfies a specific global decay estimate, which plays a pivotal role in our proof. 
\begin{theorem}\label{L-criti-blow1}
Let $\hat{\epsilon}_{n}:=\left(\|u_{p_{n}}\|_{\infty}\right)^{-\frac{p_{n}-2}{2}}.$
Then, there exists $k\in \{1,2\}$, and sequences of points $\{O_{n}^{i}\}$ for $i\in\{1,k\}$, such that there exists some $C>0$, independent of $n$, satisfying the following condition:
\begin{equation}\label{D22}
  \Theta_n^{k}(x):=\big(d_n^k(x)\big)^{\frac{2}{p_n-2}(2-\frac{1}{N})}(\hat{\epsilon}_n)^{-\frac{2}{p_n-2}(1-\frac{1}{N})}u_{p_n}(x)\leq C,\quad \forall x\in \Omega,~~ \forall n\in \mathbb{N},
\end{equation}
where $d_{n}^{k}(x):=\min\{|x-O_{n}^{i}|:i=1,\cdots,k\}$.
\end{theorem}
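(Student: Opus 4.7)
The plan is to prove Theorem \ref{L-criti-blow1} by a contradiction-and-iteration argument in which blow-up centers are selected one by one, with the uniform Morse index bound $m(u_{p_n})\le 2$ from Proposition \ref{subcrit-theo-Lapla} forcing the iteration to terminate after at most two steps. I would start by choosing $O_n^1$ so that $u_{p_n}(O_n^1)=\|u_{p_n}\|_\infty=\hat\epsilon_n^{-2/(p_n-2)}$. If (\ref{D22}) fails for $k=1$ at this choice, then $\sup_\Omega\Theta_n^1\to\infty$, and a doubling-type lemma applied to $\Theta_n^1$, whose distance exponent $\tfrac{2}{p_n-2}(2-\tfrac{1}{N})$ exceeds the natural scaling exponent $\tfrac{2}{p_n-2}$, would produce a new point $y_n$ and scale $\mu_n:=u_{p_n}(y_n)^{-(p_n-2)/2}$ with $\mu_n\to 0$, $\mu_n/|y_n-O_n^1|\to 0$, and such that the rescaled function
\begin{equation*}
v_n(z):=\mu_n^{2/(p_n-2)}u_{p_n}(y_n+\mu_n z)
\end{equation*}
satisfies $v_n(0)=1$ and is uniformly bounded on balls of radius $R_n\to\infty$.

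Next I would analyze the limit of $v_n$. A direct rescaling of (\ref{C1-Lapla}) gives $-\Delta v_n+\lambda_n\mu_n^2 v_n=v_n^{p_n-1}$ on an expanding domain. Selecting $y_n$ through a weighted local-maximum version of the doubling procedure ensures $\lambda_n\mu_n^2\le 1$, so combined with Corollary \ref{lower bound} we may extract $\lambda_n\mu_n^2\to\lambda_\infty\in[0,\infty)$ and $v_n\to V_\infty$ in $C^2_{\mathrm{loc}}$, where $V_\infty\ge 0$, $V_\infty(0)=1$, and $V_\infty$ solves $-\Delta V+\lambda_\infty V=V^{2^*-1}$ either on $\mathbb{R}^N$ or on a half-space $\mathbb{R}^N_+$ with zero Dirichlet trace. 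Classical Liouville-type nonexistence of positive finite-energy solutions on the half-space excludes the boundary alternative, so $V_\infty$ is a genuine entire positive solution: an Aubin-Talenti bubble when $\lambda_\infty=0$, or a positive Brezis-Nirenberg solution when $\lambda_\infty>0$.

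Finally I would invoke the Morse index. Any such $V_\infty$ admits a $\psi\in C_c^\infty$ with $\int(|\nabla\psi|^2+\lambda_\infty\psi^2-(2^*-1)V_\infty^{2^*-2}\psi^2)\,dx<0$; transplanting via $\psi_n(x):=\mu_n^{-(N-2)/2}\psi((x-y_n)/\mu_n)$ yields an element of $H_0^1(\Omega)$ that, for $n$ large, is a negative direction of $Q_{\lambda_n,u_{p_n}}$. The same construction carried out at $O_n^1$ provides a second such direction, and the separation $\mu_n/|y_n-O_n^1|\to 0$ guarantees disjoint supports, hence $m(u_{p_n})\ge 2$. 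If (\ref{D22}) also fails for $k=2$ with $O_n^2:=y_n$, iterating the procedure one more time produces a third well-separated blow-up point, a third independent negative direction, and the contradiction $m(u_{p_n})\ge 3$ with Proposition \ref{subcrit-theo-Lapla}. The iteration must therefore terminate at some $k\in\{1,2\}$, completing the argument.

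The main obstacle I anticipate is twofold. First is the rigorous exclusion of the half-space alternative, which requires a Pohozaev-type identity on expanding half-balls combined with $H^1$ control of $v_n$. Second is the quantitative verification that test functions transplanted at distinct blow-up points have genuinely disjoint supports; this depends crucially on the exponent $2-\tfrac{1}{N}$ in the definition of $\Theta_n^k$, which is precisely what enforces the scale separation $\mu_n\ll|y_n-O_n^1|$ and makes the Morse-index count consistent. A secondary subtlety is handling $\lambda_\infty=0$ in dimension $N=3$, where the slow decay of the Aubin-Talenti bubble requires additional integrability arguments to remain compatible with the fixed mass constraint $\|u_{p_n}\|_2^2=c$.
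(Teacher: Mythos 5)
Your high-level strategy is right: exhibit blow-up centers one at a time, transplant a negative direction of the limit Hessian at each center, use the disjointness of supports to sum Morse index contributions, and stop at $k\le 2$ because $m(u_{p_n})\le 2$. This matches the paper's skeleton. However, there are two substantive gaps.

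\textbf{The rescaling does not match the weight.} You propose $\mu_n = u_{p_n}(y_n)^{-(p_n-2)/2}$, the natural parabolic scale, and assert that a doubling-type lemma for $\Theta_n^1$ yields uniform boundedness of $v_n(z)=\mu_n^{2/(p_n-2)}u_{p_n}(y_n+\mu_n z)$ on expanding balls. But the exponent $\tfrac{2}{p_n-2}(2-\tfrac1N)$ in $\Theta_n^1$ is strictly larger than the natural scaling exponent $\tfrac{2}{p_n-2}$, and a Poláčik--Quittner--Souplet doubling argument with the natural scale does \emph{not} produce bounds against the heavier weight; conversely, working with the heavier weight would force a different scale. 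The paper resolves this tension by not using the natural scale at all: it sets $\hat\epsilon_n^{(2)} := A_n(O_n^2)^{-\frac{p_n-2}{2(2-1/N)}}$ (where $A_n(x)=\hat\epsilon_n^{-\frac{2}{p_n-2}(1-1/N)}u_{p_n}(x)$), a scale calibrated to the weighted maximum rather than to $u_{p_n}(O_n^2)$ itself. The inequality $\Theta_n^1(x)\le\Theta_n^1(O_n^2)$ then converts directly into the local bound used in (\ref{D5})--(\ref{c}), and the rescaled sequence $\hat U_{p_n,2}$ is uniformly bounded on compacta \emph{without} achieving its maximum at the origin (the paper stresses this point explicitly in the introduction). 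The price is that the limit equation acquires a coefficient $\hat\delta = \lim\bigl(\hat\epsilon_n^{(1)}/\hat\epsilon_n^{(2)}\bigr)^{2(1-1/N)}\in[0,1]$ that must be shown to be \emph{strictly positive}, which the paper proves by a separate maximum-principle argument. Your scheme omits this nondegeneracy issue entirely, and it is not a formality: if $\hat\delta=0$ the limit is harmonic and carries no negative direction, so the Morse index count collapses.

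\textbf{Misclassification of the limit when $\lambda_\infty>0$.} You describe the entire limit for $\lambda_\infty>0$ as ``a positive Brezis--Nirenberg solution.'' There is no such object on all of $\mathbb{R}^N$: the Pohozaev identity (together with the finite-energy information furnished by the Morse index bound, which the paper uses explicitly via the computation $m(v_*)\le m(u_{p_n})\le 2$ and the resulting $L^2/L^{2^*}$ integrability) shows the equation $-\Delta V+\lambda V=V^{2^*-1}$, $\lambda>0$, admits only the trivial solution in $\mathbb{R}^N$. The paper accordingly derives a \emph{contradiction} in this case (and in the half-space sub-case via Esteban--Lions and Farina), concluding $\lambda_n(\hat\epsilon_n^{(2)})^2\to 0$, rather than accepting the case and extracting a negative direction from a nonexistent solution. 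Relatedly, the boundary blow-up sub-case (where $O_n^2\to\partial\Omega$ at scale comparable to the blow-up radius) requires the full boundary-straightening computation with the map $\Phi$, the Taylor expansion of $\hat\gamma$, and the disjoint-support construction in flattened coordinates; you flag the half-space exclusion as an ``anticipated obstacle'' but do not supply any mechanism for it, and in this regime the doubling lemma does not automatically deliver the uniform bound on $W_{p_n}$ that the paper obtains from the pointwise estimate (\ref{Y-J-0}) together with the geometry of the boundary chart.
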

To prove Theorem \ref{L-criti-blow1}, we first analyze the local behavior of the sequence $\{u_{p_{n}}\}$ near its global maximum point.
\begin{lemma}\label{L-criti-blow2} Let $\{O_{n}\}\subset\Omega$ be such that $u_{p_n}(O_{n})=\|u_{p_{n}}\|_{\infty}$.
Then, up to a subsequence, the following properties hold:
\begin{description}
  \item[(i)] $\lambda_{n}(\hat{\epsilon}_{n})^{2}\to 0$ as $n\to+\infty.$
  \item[(ii)] We define
\begin{equation*}
  \hat{U}_{p_n,1}(y):=(\hat{\epsilon}_{n})^{\frac{2}{p_{n}-2}}u_{p_{n}}(O_{n}+\hat{\epsilon}_{n}y), \quad \forall y\in\hat{\Omega}_{p_{n}}:=\frac{\Omega-O_{n}}{\hat{\epsilon}_{n}}.
\end{equation*}
Then
$\hat{U}_{p_n,1}\rightarrow \hat{U}_{1}$ in $C^{1}$ uniformly on compact subsets of ~$\mathbb{R}^{N}$, where $\hat{U}_{1}$ satisfies $m(\hat{U}_{1})\leq 2$ and solves
\begin{equation}\label{10-3-1}
\left\{
\begin{aligned}
 & -\Delta \hat{U}_{1}=\hat{U}_{1}^{2^{*}-1} \quad \mbox{in} \ \mathbb{R}^{N},\\
 & \hat{U}_{1}(0)=\max\limits_{y\in\mathbb{R}^{N}}\hat{U}_{1}(y).
 \end{aligned}
 \right.
\end{equation}
  \item[(iii)]  There exists $\hat{\phi}_{n,1}\in C^{\infty}_{0}(\Omega)$ such that for sufficiently large $n$, $supp\hat{\phi}_{n,1}\subset B_{\hat{R}_1\hat{\epsilon}_{n}}(O_{n})$ for some $\hat{R}_1>0$ and
\begin{equation*}
Q_{\lambda_{n},u_{p_{n}}}(\hat{\phi}_{n,1})<0.
\end{equation*}
\end{description}
\end{lemma}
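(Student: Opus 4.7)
The plan is a blow-up near the global maximum $O_n$, combined with a case analysis of the limit equation on $\mathbb{R}^N$ versus a half-space. Introduce
\[
v_n(y):=\hat{\epsilon}_n^{2/(p_n-2)}u_{p_n}(O_n+\hat{\epsilon}_n y),\quad \mu_n:=\lambda_n\hat{\epsilon}_n^{2},\quad \hat{\Omega}_{p_n}:=\frac{\Omega-O_n}{\hat{\epsilon}_n},
\]
so that $v_n=\hat U_{p_n,1}$ is positive, vanishes on $\partial\hat{\Omega}_{p_n}$, and a direct rescaling gives
\[
-\Delta v_n+\mu_n v_n=v_n^{p_n-1}\text{ in }\hat{\Omega}_{p_n},\qquad v_n(0)=\|v_n\|_\infty=1.
\]
Evaluating this at the interior maximum $O_n$ together with $-\Delta u_{p_n}(O_n)\ge 0$ forces $\mu_n\in[0,1]$, so a subsequence satisfies $\mu_n\to\tilde\lambda\in[0,1]$. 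Standard interior elliptic estimates give uniform $C^{2,\alpha}_{\mathrm{loc}}$ bounds on $v_n$, and according to whether $\delta_n:=\dist(O_n,\partial\Omega)/\hat{\epsilon}_n\to+\infty$ or remains bounded, the natural limit domain is $\mathbb{R}^N$ or, after rotation and translation, a half-space with Dirichlet data.

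To prove (i), I argue by contradiction and assume $\tilde\lambda>0$. A $C^1_{\mathrm{loc}}$ subsequential limit $\hat U_1\ge 0$ produces a nontrivial bounded positive solution of $-\Delta\hat U_1+\tilde\lambda\hat U_1=\hat U_1^{2^*-1}$ on the limit domain with $\hat U_1(0)=1$. In the $\mathbb{R}^N$-case, Moser iteration applied to the equation gives $\hat U_1(x)\to 0$ as $|x|\to\infty$, so $(2^*-1)\hat U_1^{2^*-2}<\tilde\lambda/2$ for large $|x|$; comparison with an exponential barrier $Ce^{-\sqrt{\tilde\lambda}|x|/2}$ then yields exponential decay, whence $\hat U_1\in H^1(\mathbb{R}^N)$. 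Combining the Pohozaev identity with testing the equation against $\hat U_1$ forces $\tilde\lambda\|\hat U_1\|_2^2=0$, contradicting $\hat U_1(0)=1$. The half-space alternative is excluded by the Esteban--Lions type non-existence result for positive bounded solutions of $-\Delta u+\tilde\lambda u=u^{2^*-1}$ on a half-space with Dirichlet data. Hence $\tilde\lambda=0$, proving (i).

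For (ii), with $\mu_n\to 0$ now known, the same extraction yields a bounded positive $C^2$-solution $\hat U_1$ of $-\Delta\hat U_1=\hat U_1^{2^*-1}$ on the limit domain with $\hat U_1(0)=1=\max\hat U_1$. The half-space case is again ruled out by Gidas--Ni--Nirenberg moving planes and the classical non-existence for the critical equation on half-spaces with zero boundary data, so the limit domain is $\mathbb{R}^N$. The Morse-index bound is inherited by semi-continuity: for any $k$-dimensional subspace $W\subset C_0^\infty(\mathbb{R}^N)$ on which $\phi\mapsto\int|\nabla\phi|^2-(2^*-1)\int\hat U_1^{2^*-2}\phi^2$ is negative definite, the rescaled pull-backs $\hat{\epsilon}_n^{-(N-2)/2}\phi((\cdot-O_n)/\hat{\epsilon}_n)$ are admissible test functions for $Q_{\lambda_n,u_{p_n}}$, and by the scaling identities used below together with $\mu_n\to 0$ and uniform convergence $v_n^{p_n-2}\to\hat U_1^{2^*-2}$ on compact sets, the corresponding quadratic forms converge to the limit form; since $m(u_{p_n})\le 2$, this forces $k\le 2$.

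For (iii), the Aubin--Talenti profile $\hat U_1$ is linearly unstable: testing the limit quadratic form against $\hat U_1$ itself gives $\int|\nabla\hat U_1|^2-(2^*-1)\int\hat U_1^{2^*}=(2-2^*)\int\hat U_1^{2^*}<0$, and a standard smooth compactly supported cutoff produces $\phi^*\in C_0^\infty(\mathbb{R}^N)$ with $\int|\nabla\phi^*|^2-(2^*-1)\int\hat U_1^{2^*-2}\phi^{*2}<0$. Set $\hat\phi_{n,1}(x):=\hat{\epsilon}_n^{-(N-2)/2}\phi^*((x-O_n)/\hat{\epsilon}_n)$. The scaling identities
\[
\int|\nabla\hat\phi_{n,1}|^2 dx=\int|\nabla\phi^*|^2 dy,\ \ \int\hat\phi_{n,1}^2 dx=\hat{\epsilon}_n^2\int\phi^{*2} dy,\ \ \int u_{p_n}^{p_n-2}\hat\phi_{n,1}^2 dx=\int v_n^{p_n-2}\phi^{*2} dy
\]
reduce $Q_{\lambda_n,u_{p_n}}(\hat\phi_{n,1})$ to $\int|\nabla\phi^*|^2+\mu_n\int\phi^{*2}-(p_n-1)\int v_n^{p_n-2}\phi^{*2}$, which by (i) and (ii) converges to the strictly negative limit; choosing $\hat R_1$ so that $\supp\phi^*\subset B_{\hat R_1}$ and using $\delta_n\to+\infty$ guarantees $\supp\hat\phi_{n,1}\subset B_{\hat R_1\hat{\epsilon}_n}(O_n)\subset\Omega$ for large $n$. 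The main obstacle is (i): the critical limit equation on $\mathbb{R}^N$ admits positive bounded solutions when $\tilde\lambda=0$ (the bubbles) but none when $\tilde\lambda>0$, and the delicate point is establishing a posteriori enough integrability of $\hat U_1$ to access Pohozaev, which is the role of the exponential-barrier comparison.
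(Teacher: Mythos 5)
The proposal follows the same broad blow-up strategy as the paper, and your parts (ii) and (iii) are essentially the paper's argument. However, there is a genuine gap in your proof of (i).

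You pass to a bounded positive limit $\hat U_1$ solving $-\Delta\hat U_1+\tilde\lambda\hat U_1=\hat U_1^{2^*-1}$ on $\mathbb{R}^N$ with $\hat U_1(0)=1$ and assert that ``Moser iteration applied to the equation gives $\hat U_1(x)\to 0$ as $|x|\to\infty$.'' This is unjustified. Moser iteration yields local $L^\infty$ control or decay only once one has some global integrability ($\hat U_1\in L^q(\mathbb{R}^N)$ for a suitable $q$), and no such bound survives the blow-up rescaling: $\int_{\hat\Omega_{p_n}}\hat U_{p_n,1}^{2}\,dy=\hat\epsilon_n^{\,4/(p_n-2)-N}\,c\to+\infty$ since $4/(p_n-2)-N\to -2$, and a bound on $\int_\Omega u_{p_n}^{2^*}$ is precisely what we are trying to establish. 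Without an integrability input, bounded positive solutions of $-\Delta u+\tilde\lambda u=u^{2^*-1}$ on $\mathbb{R}^N$ need not decay at all; the constant $u\equiv\tilde\lambda^{1/(2^*-2)}$ is one. So the exponential-barrier step and the subsequent Pohozaev argument do not get off the ground.

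What the paper does differently, and what actually closes the gap, is to carry the Morse-index information into the proof of (i) itself. Rescaling by $\lambda_n^{-1/2}$ (rather than by $\hat\epsilon_n$), the limit $v_*$ solves $-\Delta v_*+v_*=v_*^{2^*-1}$ on $\mathbb{R}^N$ or a half-space, and by testing against rescaled functions orthogonal in $L^2(\Omega)$ one shows $m(v_*)\le m(u_{p_n})\le 2$. It is this finite-Morse-index bound on $v_*$ (a Farina/Dancer-type argument) that yields $\nabla v_*\in L^2$ and $\tfrac{1}{2^*}|v_*|^{2^*}-\tfrac12 v_*^2\in L^1$, which is exactly the integrability needed to invoke the Pohozaev identity on $\mathbb{R}^N$ and the Esteban--Lions theorem on a half-space, forcing $v_*\equiv 0$ and contradicting $v_*(0)=\bar\lambda^{-1/(2^*-2)}>0$. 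Your proposal invokes the Morse-index bound only later (for part (ii)); you need it already in (i), and you need to use it to derive integrability, not decay via Moser iteration. As a minor point, for the half-space alternative in (ii) the relevant classification is Farina's theorem for the critical Lane--Emden equation on half-spaces, not Gidas--Ni--Nirenberg (which concerns symmetry on $\mathbb{R}^N$ or balls).
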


\begin{proof}
(i)~ From (\ref{C1-Lapla}) and Lemma \ref{lower bound}, it follows that there exists $\bar{\lambda}\in[0,1]$ such that
\begin{eqnarray*}
\lambda_{n}(\hat{\epsilon}_{n})^{2}\to \bar{\lambda} ~~\mbox{as}~~ n\to+\infty.
\end{eqnarray*}

Now, we demonstrate that $\bar \lambda=0.$ Suppose, for contradiction, that $\bar \lambda \in (0,1].$ Define
\begin{equation*}
v_{p_{n}}(y):=\lambda_{n}^{-\frac{1}{p_{n}-2}}u_{p_{n}}(O_{n}+\lambda_{n}^{-\frac{1}{2}}y),\quad \forall y\in\check{\Omega}_{p_{n}}:=\lambda_{n}^{\frac{1}{2}}(\Omega-O_{n}).
\end{equation*}
Then $v_{p_{n}}$ satisfies
\begin{displaymath}
\left\{
\begin{aligned}
&-\Delta v_{p_{n}} + v_{p_{n}} = v_{p_{n}}^{p_{n}-1} \quad  &\mbox{in}& \  \check{\Omega}_{p_{n}},\\
& v_{p_{n}}(y) \leq v_{p_{n}}(0)=(\lambda_{n}(\hat{\epsilon}_{n})^{2})^{-\frac{1}{p_{n}-2}} \quad &\mbox{in}& \ \check{\Omega}_{p_{n}},\\
&v_{p_{n}}=0 \quad &\mbox{on}& \ \partial\check{\Omega}_{p_{n}}.
\end{aligned}
\right.
\end{displaymath}
Furthermore, $v_{p_{n}}\rightarrow v_{*}$ in $C^{1}$ uniformly on compact subsets of $\check{H}_{*}$, where $v_{*}$ solves
\begin{eqnarray}\label{10-3-2}
\left\{
\begin{aligned}
&-\Delta v_{*} + v_{*} =v_{*}^{2^{*}-1} \quad  &\mbox{in}& \ \check{H}_{*},\\
& v_{*}(y) \leq v_{*}(0)=\bar{\lambda}^{-\frac{1}{2^{*}-2}} \quad &\mbox{in}& \ \check{H}_{*},\\
&v_{*}=0 \quad &\mbox{on}& \ \partial\check{H}_{*}
\end{aligned}
\right.
\end{eqnarray}
with
\begin{equation*}
\check{H}_{*}=\left\{
\begin{aligned}
&\mathbb{R}^{N} \quad &~&\mbox{if} \ \lambda_{n}^{\frac{1}{2}}dist(O_{n},\partial\Omega)\rightarrow +\infty,\\
&\mathbb{R}^{N}_{L_1} \quad &~& \mbox{if} \ \lambda_{n}^{\frac{1}{2}}dist(O_{n},\partial\Omega)\rightarrow L_{1},
\end{aligned}
\right.
\end{equation*}
where $\mathbb{R}^{N}_{L_1}:=\{x=(x_{1}, \cdots, x_{N})\in \mathbb{R}^N: x_{N}>-L_{1}\}$.
By elliptic regularity up to the boundary, we obtain that $\{|\nabla v_{p_{n}}|\}$ is uniformly bounded. By standard arguments as in \cite{GS}, we deduce $L_{1}>0.$
Furthermore, we can verify that $m(v_{*})\leq 2$. Indeed, if $\zeta_1, \cdots, \zeta_k$ are orthogonal in $L^{2}(\check{H}_{*})$ and satisfy
\begin{equation*}
  \int_{\check{H}_{*}}|\nabla\zeta_i|^2 dy+\int_{\check{H}_{*}}\left(1-(2^{*}-1) v_{*}^{2^*-2}\right) \zeta_i^2 dy< 0, \forall i=1,\cdots,k,
\end{equation*}
then, by setting $\zeta_{i,n}(x)=\lambda_n^{\frac{N-2}{4}}\zeta_{i}(\lambda_n^{\frac{1}{2}}(x-O_n))$, we observe that
$\zeta_{i,n}$ are orthogonal in $L^{2}(\Omega)$ and
\begin{align*}
  &\int_{\Omega}|\nabla\zeta_{i,n}|^2dx+\int_{\Omega}\left(\lambda_n-(p_n-1)u_{p_n}^{p_n-2}\right)\zeta_{i,n}^2dx\nonumber\\
=&\int_{ \check{\Omega}_{p_{n}}}|\nabla\zeta_{i}|^2dy+\int_{ \check{\Omega}_{p_{n}}}\left(1-(p_n-1)\lambda_n^{-1}u_{p_n}^{p_n-2}(O_n+\lambda_n^{-\frac{1}{2}}y)\right)\zeta_{i}^2dy\nonumber\\
=&\int_{ \check{\Omega}_{p_{n}}}|\nabla\zeta_{i}|^2dy+\int_{ \check{\Omega}_{p_{n}}}\left(1-(p_n-1)v_{p_n}^{p_n-2}\right)\zeta_{i}^2dy\nonumber\\
\rightarrow&\int_{\check{H}_{*}}|\nabla\zeta_i|^2 dy+\int_{\check{H}_{*}}\left(1-(2^{*}-1) v_{*}^{2^*-2}\right) \zeta_i^2 dy< 0\label{10-3-3}
\end{align*}
as $n\rightarrow+\infty$ for all $i=1,\cdots,k$. Hence, $m(v_*)\leq m(u_{p_n})\leq 2$. 
This implies that $\nabla v_* \in L^2(\check{H}_{*})$ and $\frac{1}{2^*}|v_*|^{2^*}-\frac{1}{2}v_*^2\in L^1(\check{H}_{*}).$ If $\check{H}_{*}=\mathbb{R}^N$, then 
by the Pohozaev identity, problem (\ref{10-3-2}) admits only the trivial solution. If $\check{H}_{*}$ is a half-space, by Theorem I.1 in \cite{EL1982} (see also Theorem 1.25 in \cite{P2010}), problem (\ref{10-3-2}) again admits only the trivial solution.
In either case, a contradiction arises. Therefore, we conclude that $\bar\lambda=0.$

(ii)~ Clearly, $\hat{U}_{p_n,1}$ satisfies
\begin{displaymath}
\left\{
\begin{aligned}
&-\Delta \hat{U}_{p_n,1}+\lambda_{n}(\hat{\epsilon}_{n})^{2}\hat{U}_{p_n,1}=\hat{U}_{p_n,1}^{p_{n}-1}\quad  &\mbox{in}& \ \hat{\Omega}_{p_{n}},\\
& \hat{U}_{p_n,1}(y)\leq \hat{U}_{p_n,1}(0)=1 \quad &\mbox{in}& \ \hat{\Omega}_{p_{n}},\\
&\hat{U}_{p_n,1}=0\quad &\mbox{on}& \   \partial\hat{\Omega}_{p_{n}},
\end{aligned}
\right.
\end{displaymath}
and $\hat{U}_{p_n,1}\rightarrow \hat{U}_1$ in $C^{1}$ uniformly on compact subsets of $\hat{H}_{*}$, where $\hat{U}_{1}$ satisfies $m(\hat{U}_{1})\leq 2$ and solves
\begin{equation}\label{10-3-4}
\left\{
\begin{aligned}
&-\Delta \hat{U}_{1}=\hat{U}_{1}^{2^{*}-1}  \quad  &\mbox{in}& \ \hat{H}_{*},\\
& \hat{U}_{1}(y)\leq \hat{U}_{1}(0)=1 \quad &\mbox{in}&  \ \hat{H}_{*},\\
& \hat{U}_{1}=0\quad &\mbox{on}& \ \partial\hat{H}_{*}.
\end{aligned}
\right.
\end{equation}
Here $\hat{H}_{*}$ satisfies
\begin{equation*}
\hat{\Omega}_{p_{n}}\to \hat{H}_{*}=\left\{
  \begin{aligned}
&\mathbb{R}^{N}  \quad &\mbox{if}& \ \frac{dist(O_{n},\partial\Omega)}{\hat{\epsilon}_{n}}\rightarrow +\infty,\\
&\mathbb{R}^{N}_{L_2} \quad &\mbox{if}& \ \frac{dist(O_{n},\partial\Omega)}{\hat{\epsilon}_{n}}\rightarrow L_2
\end{aligned}
\right.
\end{equation*}
with $\mathbb{R}^{N}_{L_2}:=\{x=(x_{1}, \cdots, x_{N})\in \mathbb{R}^N: x_{N}>-L_{2}\}$ for some $L_2> 0.$ By Theorem 12 in \cite{F2007}, problem (\ref{10-3-4}) has only the trivial solution when $\hat{H}_{*}=\mathbb{R}^{N}_{L_2}.$ Therefore, we deduce $\hat{H}_{*}=\mathbb{R}^{N}$ and then (\ref{10-3-1}) holds.

(iii)~ By (\ref{10-3-4}), it is easily seen that $ Q_{0,\hat{U}_{1}}(\hat{U}_{1})<0$,
which implies that $m(\hat{U}_{1})\geq 1$.  Consequently, there exists a function $\hat{\phi}_{1}\in C^{\infty}_{0}(\mathbb{R}^{N})$ with $supp\hat{\phi}_{1}\subset B_{\hat{R}_1}$ for some $\hat{R}_1>0$ such that $Q_{0,\hat{U}_{1}}(\hat{\phi}_1)<0.$
We define the scaled function
\begin{eqnarray*}
\hat{\phi}_{n,1}(x):=(\hat{\epsilon}_{n})^{-\frac{N-2}{2}}\hat{\phi}_{1}\left(\frac{x-O_{n}}{\hat{\epsilon}_{n}}\right),~~\forall x\in \mathbb{R}^N.
\end{eqnarray*}
Clearly, $supp\hat{\phi}_{n,1}\subset B_{\hat{R}_1\hat{\epsilon}_{n}}(O_{n})$. 
Note that $B_{\hat{R}_1\hat{\epsilon}_{n}}(O_{n})\subset\Omega$ for sufficiently large $n$,
we have $\hat{\phi}_{n,1}\in C^{\infty}_{0}(\Omega)$. Then
\begin{align*}
  Q_{\lambda_{n},u_{p_{n}}}(\hat{\phi}_{n,1})
=&\int_{\Omega}|\nabla\hat{\phi}_{n,1}|^2dx+\int_{\Omega}\left(\lambda_n-(p_n-1)u_{p_n}^{p_n-2}\right)\hat{\phi}_{n,1}^2dx\nonumber\\
=&\int_{\hat{\Omega}_{p_{n}}}|\nabla\hat{\phi}_{1}|^2dy+\int_{ \hat{\Omega}_{p_{n}}}\left(\lambda_n(\hat{\epsilon}_{n})^2-(p_n-1)\hat{U}_{p_n,1}^{p_n-2}\right)\hat{\phi}_{1}^2dy\nonumber\\
\rightarrow&\int_{\mathbb{R}^N}|\nabla\hat{\phi}_1|^2 dy-(2^{*}-1)\int_{\mathbb{R}^N}\hat{U}_{1}^{2^*-2} \hat{\phi}_1^2 dy=Q_{0,\hat{U}_{1}}(\hat{\phi}_1)<0.
\end{align*}
\end{proof}

\begin{proof}[Proof of Theorem \ref{L-criti-blow1}]

The proof proceeds in two steps.

{\it Step 1. (\ref{D22}) holds for $k=1$ or there exists $\{\hat{\phi}_{n,2}\}\subset C^{\infty}_{0}(\Omega)$ with $supp\hat{\phi}_{n,2}\subset B_{\hat{R}_2\hat{\epsilon}_{n}^{(2)}}(O_{n}^{2})$ for some $\hat{R}_2>0$ such that for sufficiently large $n$,
\begin{equation}\label{9-30-5}
Q_{\lambda_{n}, u_{p_{n}}}(\hat{\phi}_{n,2})<0.
\end{equation}
}

Let $O_n^1=O_n$ and $\hat{\epsilon}_{n}^{(1)}=\hat{\epsilon}_n$.
If (\ref{D22}) does not hold for $k=1$, then there exists $\{O_n^2\}\subset\Omega$ such that
\begin{equation}\label{Y-J-0}
\max\limits_{x\in\Omega}\Theta_n^1(x)=\Theta_n^1(O_n^2)
=\big(d_n^1(O_n^2)\big)^{\frac{2}{p_n-2}(2-\frac{1}{N})}(\hat{\epsilon}_n^{(1)})^{-\frac{2}{p_n-2}(1-\frac{1}{N})}u_{p_n}(O_n^2)\to +\infty.
\end{equation}
Clearly, $O_n^2\neq O_n^1$. Define 
$$
A_n(x):=(\hat{\epsilon}_n^{(1)})^{-\frac{2}{p_n-2}(1-\frac{1}{N})}u_{p_n}(x),~ \forall x\in\Omega.
$$
 By (\ref{Y-J-0}), it follows that $A_n(O_n^2)\to+\infty$. Set
\begin{equation*}
 \hat{\epsilon}_n^{(2)}:=A_n(O_n^2)^{-\frac{p_n-2}{2(2-\frac{1}{N})}}.
\end{equation*}
It is easy to observe that
\begin{equation}\label{D11}
\hat{\epsilon}_n^{(2)}\geq \hat{\epsilon}_n^{(1)}, \quad \hat{\epsilon}_n^{(2)}\to 0\quad \mbox{and}\quad \frac{|O_n^2-O_n^1|}{\hat{\epsilon}_n^{(2)}}\to+\infty.
\end{equation}
Define
\begin{equation}\label{D13}
  \hat{U}_{p_n,2}(y):=(\hat{\epsilon}_{n}^{(2)})^{\frac{4}{p_{n}-2}(1-\frac{1}{2N})}A_{n}(O_{n}^2+\hat{\epsilon}_{n}^{(2)} y), \quad \forall y\in\hat{\Omega}_{p_{n}}^{2}:=\frac{\Omega-O_{n}^2}{\hat{\epsilon}_{n}^{(2)}}.
\end{equation}
By direct computations, we have
\begin{equation*}\label{D26-Lapla}
  \left\{
  \begin{aligned}
  & -\Delta\hat{U}_{p_n,2}+\lambda_n(\hat{\epsilon}_n^{(2)})^{2}\hat{U}_{p_n,2}=\left(\frac{\hat{\epsilon}_n^{(1)}}{\hat{\epsilon}_n^{(2)}}\right)^{2(1-\frac{1}{N})}\hat{U}_{p_n,2}^{p_n-1} \quad &\mbox{in}& \  \hat{\Omega}_{p_{n}}^{2},\\
  &\hat{U}_{p_n,2}(0)=1, \quad\hat{U}_{p_n,2}=0 \quad &\mbox{on}& \  \partial\hat{\Omega}_{p_{n}}^{2}.
  \end{aligned}
  \right.
\end{equation*}
To derive (\ref{9-30-5}), we seek to show that $\hat{U}_{2}$ satisfies
\begin{equation}\label{Y-J-5}
\left\{
\begin{aligned}
 &-\Delta \hat{U}_2=\hat{\delta} \hat{U}_2^{2^*-1} \quad  \mbox{in} \ \mathbb{R}^{N},\\
&\hat{U}_2(0)=1
\end{aligned}
\right.
\end{equation}
for some $\hat{\delta}\in(0,1].$
Indeed, if (\ref{Y-J-5}) holds, we can verify that $Q_{0,\hat{U}_{2}}(\hat{U}_{2})<0$,
which implies that $m(\hat{U}_{2})\ge 1$. Consequently, there exists $\hat{\phi}_2\in C^{\infty}_{0}(\mathbb{R}^{N})$ such that $supp\hat{\phi}_2\subset B_{\hat{R}_2}$ for some $\hat{R}_2>0$ and $ Q_{0,\hat{U}_2}(\hat{\phi}_2)<0$.
Set 
$$
\hat{\phi}_{n,2}(x):=(\hat{\epsilon}_{n}^{(2)})^{-\frac{N-2}{2}}\hat{\phi}_{2}\left(\frac{x-O_{n}^2}{\hat{\epsilon}_{n}^{(2)}}\right),~~\forall x\in \mathbb{R}^N.
$$
Clearly, $supp\hat{\phi}_{n,2}\subset B_{\hat{R}_2\hat{\epsilon}_{n}^{(2)}}(O_{n}^{2})$.
  Using similar arguments as in Lemma \ref{L-criti-blow2} (iii), we deduce that for sufficiently large $n$, $\hat{\phi}_{n,2}\in C^{\infty}_{0}(\Omega)$, and then (\ref{9-30-5}) holds.

To prove (\ref{Y-J-5}), we first establish the following result:
\begin{equation}\label{Y-J-3}
 \lambda_n(\hat{\epsilon}_n^{(2)})^2\to 0.
\end{equation}
To derive (\ref{Y-J-3}), we need to rule out the following two cases.

Case (i). $\lambda_n(\hat{\epsilon}_n^{(2)})^2\to\tilde{\lambda}\in(0,+\infty)$. We divide it into the following two subcases.

(a). $O_n^2\to O^2\in\Omega$. 
In this case, we have
$$
\frac{dist(O_n^{2},\partial\Omega)}{\hat{\epsilon}_n^{(2)}}\to +\infty,
$$ 
which implies that
$\hat{\Omega}_{p_{n}}^{2}\to\mathbb{R}^N$.
By (\ref{Y-J-0}) and (\ref{D13}), we deduce
\begin{align}
\hat{U}_{p_n,2}(y)=\frac{u_{p_n}(O_{n}^2+\hat{\epsilon}_{n}^{(2)}y)}{u_{p_n}(O_{n}^{2})}
\leq \left(\frac{d_n^1(O_n^2)}{d_n^1(O_{n}^2+\hat{\epsilon}_{n}^{(2)}y)}\right)^{\frac{4}{p_n-2}(1-\frac{1}{2N})}
=\left(\frac{|O_n^2-O_n^1|}{|O_{n}^2+\hat{\epsilon}_{n}^{(2)}y-O_n^1|}\right)^{\frac{4}{p_n-2}(1-\frac{1}{2N})}.\label{D5}
\end{align}
Observe that for any  $y\in \hat{\Omega}_{p_{n}}^{2},$ 
\begin{equation*}
|O_{n}^2-O_n^1|-\hat{\epsilon}_{n}^{(2)}|y|\leq |O_{n}^2+\hat{\epsilon}_{n}^{(2)}y-O_n^1|\leq |O_{n}^2-O_n^1|+\hat{\epsilon}_{n}^{(2)}|y|.
\end{equation*}
Combining this with (\ref{D11}), we conclude that for any bounded subset $D\subset\mathbb{R}^{N},$
\begin{equation}\label{c}
\frac{|O_{n}^2+\hat{\epsilon}_{n}^{(2)}y-O_n^1|}{|O_n^2-O_n^1|}\to 1, \ \forall y\in D.
\end{equation}
Hence, $\hat{U}_{p_n,2}\to \hat{U}_2$ uniformly in $C^{1}$ on compact subsets of $\mathbb{R}^{N}$. By (\ref{D11}), we have $\frac{\hat{\epsilon}_n^{(1)}}{\hat{\epsilon}_n^{(2)}}\in(0,1]$, and thus
$\hat{U}_2$ satisfies
\begin{equation}\label{D6}
  \left\{
  \begin{aligned}
  & -\Delta \hat{U}_{2}+\tilde{\lambda} \hat{U}_{2}=\hat{\delta} \hat{U}_{2}^{\frac{N+2}{N-2}} \quad &\mbox{in}& \ \mathbb{R}^{N},\\
  & \hat{U}_{2}(0)=1,  
  \end{aligned}
  \right.
\end{equation}
where 
\begin{equation}\label{D18}
\hat{\delta}:=\lim\limits_{n\to+\infty}\left(\frac{\hat{\epsilon}_n^{(1)}}{\hat{\epsilon}_n^{(2)}}\right)^{2(1-\frac{1}{N})}\in[0,1].
\end{equation}
We claim that $\hat{\delta}>0$. In fact, if not, we would assume $\hat{\delta}=0$. From (\ref{D5})-(\ref{D6}), we have $\hat{U}_{2}(0)=\max\limits_{x\in \mathbb{R}^N}\hat{U}_{2}(x)$, which implies $-\Delta \hat{U}_{2}(0)\ge0$. Given the fact that $\tilde \lambda>0$, this leads to a contradiction.

Now we show that $m(\hat{U}_{2})\leq 2$. It suffices to prove that if $m(\hat{U}_2)\geq 3$, then $m(u_{p_n})\geq 3$.  We consider functions $\xi_1,\xi_2,\xi_3\in C^{\infty}_{0}(\mathbb{R}^{N})$ such that $supp\xi_i\subset B_{R_i}(Q_{i})\subset\Omega$ for some $R_i>0$, and $B_{R_i}(Q_i)\cap B_{R_j}(Q_{j})=\emptyset$ for $i\neq j$.
Assume that the following hold
\begin{equation}\label{Y-J-7}
  \int_{\mathbb{R}^{N}}|\nabla\xi_i|^2 dy + \int_{\mathbb{R}^{N}}\left(\tilde{\lambda} -(2^*-1)\hat{\delta}\hat{U}_{2}^{\frac{4}{N-2}} \right)\xi_i^2dy<0, \ \forall i=1,2,3.
\end{equation}
For each $i$, we define
$$
\xi_{i,n}(x):=\left(\hat{\epsilon}_n^{(2)}\right)^{-\frac{N-2}{2}}\xi_{i}\left(\frac{x-O_n^2}{\hat{\epsilon}_n^{(2)}}\right),~~\forall x\in \mathbb{R}^N.
$$
Clearly, $supp\xi_{i,n}\subset I_{i,n}:=B_{R_i\hat{\epsilon}_n^{(2)}}(O_n^2+\hat{\epsilon}_n^{(2)}Q_{i})$.
Since $\hat{\Omega}_{p_{n}}^{2}\to\mathbb{R}^{N}$, 
we deduce that $I_{i,n}\subset\Omega$ for sufficiently large $n$.
Furthermore, 
$I_{i,j,n}:=I_{i,n}\cap I_{j,n}=\emptyset.$ Then we have
\begin{equation*}
\begin{aligned}
\int_{\Omega}\xi_{i,n}(x)\xi_{j,n}(x)dx
=&\int_{I_{i,j,n}}\xi_{i,n}(x)\xi_{j,n}(x)dx=0, \quad i\neq j.
\end{aligned}
\end{equation*}
It follows that $\xi_{1,n},\xi_{2,n},\xi_{3,n}$ are orthogonal in $L^{2}(\Omega)$.
Hence, by (\ref{Y-J-7}), we obtain
\begin{align*}
  &\int_{\Omega}|\nabla\xi_{i,n}|^2dx+\int_{\Omega}\left(\lambda_n-(p_n-1)u_{p_n}^{p_n-2}\right)\xi_{i,n}^2dx\nonumber\\
=&\int_{B_{R_i}(Q_{i})}|\nabla\xi_{i}|^2dy+\int_{B_{R_i}(Q_{i})}\left(\lambda_n(\hat{\epsilon}_n^{(2)})^{2}-(p_n-1)(\hat{\epsilon}_n^{(2)})^{2}u_{p_n}^{p_n-2}(O_n^2+\hat{\epsilon}_{n}^{(2)}y)\right)\xi_{i}^2dy\nonumber\\
=&\int_{\mathbb{R}^{N}}|\nabla\xi_{i}|^2dy+\int_{ \mathbb{R}^{N}}\left(\lambda_n(\hat{\epsilon}_n^{(2)})^{2}-(p_n-1)\left(\frac{\hat{\epsilon}_n^{(1)}}{\hat{\epsilon}_n^{(2)}}\right)^{2(1-\frac{1}{N})}\hat{U}_{p_n,2}^{p_n-2}\right)\xi_{i}^2dy\nonumber\\
\rightarrow&\int_{\mathbb{R}^{N}}|\nabla\xi_i|^2 dy  + \int_{\mathbb{R}^{N}}\left(\tilde{\lambda} -(2^*-1)\hat{\delta}\hat{U}_{2}^{\frac{4}{N-2}} \right)\xi_i^2dy<0, \ \forall i=1,2,3, \label{10-3-3}
\end{align*}
which implies that $m(u_{p_n})\geq 3$.  Consequently, we arrive at a contradiction. Therefore, we conclude that
$m(\hat{U}_2)\leq 2$.

 Setting $\hat{V}_2:=\hat{\delta}^{\frac{N-2}{4}}\hat{U}_2,$ we obtain
\begin{equation*}\label{D7}
  \left\{
  \begin{aligned}
  & -\Delta \hat{V}_{2}+\tilde{\lambda} \hat{V}_{2}=\hat{V}_{2}^{\frac{N+2}{N-2}} \quad &\mbox{in}& \ \mathbb{R}^{N},\\
  & \hat{V}_{2}(0)=\hat{\delta}^{\frac{N-2}{4}}.  
  \end{aligned}
  \right.
\end{equation*}
Since $m(\hat{V}_2)=m(\hat{U}_2)\leq 2$, by employing arguments similar as in Lemma \ref{L-criti-blow2}, we derive that the above equation admits only a trivial solution in $\mathbb{R}^{N}$, thereby yielding a contradiction.

(b). $O_n^2\to O^2\in\partial\Omega$. If $\frac{dist(O_n^{2},\partial\Omega)}{\hat{\epsilon}_n^{(2)}}\to +\infty$, then we can derive a contradiction as in case (a). Therefore, we now assume that
\begin{equation}\label{D14}
\frac{dist(O_n^{2},\partial\Omega)}{\hat{\epsilon}_n^{(2)}}\to L_3
\end{equation}
for some $L_3\geq 0$. By applying a space rotation, we can assume that the tangent space to $\partial\Omega$ at $O^2$ is parallel to the plane $\{x_N=0\}$.
For simplicity, we assume that $O^2$ is the origin and $(0,0,\cdots.-1)$ is the outer normal to $\partial\Omega$ at $O^2$. Since $\partial\Omega$ is smooth, there exists $r_0>0$ and a smooth function $\hat\gamma:\mathbb{R}^{N-1}\to \mathbb{R}$ such that
\begin{itemize}
  \item [(i)] $\hat\gamma(0)=0, \frac{\partial \hat\gamma}{\partial x_i}(0)=0$ with $i=1,\cdots,N-1,$ 
  \item [(ii)] $\Omega\cap B_{r_0}=\{(x',x_N)\in \mathbb{R}^N: x_N>\hat\gamma(x')\},\  \partial\Omega\cap B_{r_0}=\{(x',x_N)\in \mathbb{R}^N: x_N=\hat\gamma(x')\},$
\end{itemize}
where $x':=(x_1,\cdots,x_{N-1}).$
For $x=(x_1,\cdots,x_N)\in \Omega\cap B_{r_0},$ we define a map $y=\Phi(x)=(\Phi_1(x),\cdots,\Phi_N(x))$ by
\begin{equation*}
\left\{
  \begin{aligned}
  &\Phi_i(x)=x_i, \quad i=1,2,\cdots,N-1,\\
  &\Phi_N(x)=x_N-\hat\gamma(x').
  \end{aligned}
  \right.
\end{equation*}
From (ii), we deduce that $\Phi(\Omega\cap B_{r_0})\subset\mathbb{R}^{N}_{+}.$
Furthermore, by (i), we know that $\Phi$ has the inverse mapping $\Psi=\Phi^{-1}:B_{2\delta}^{+}\to \Omega\cap B_{\sigma}$ for some $\delta>0$ and $\sigma\in(0,r_0).$ Therefore, for any $y\in B_{2\delta}^{+}$, there exists $x\in \Omega\cap B_{\sigma}$ such that $x=\Psi(y),$ where $\Psi(y)=(\Psi_1(y),\Psi_2(y),\cdots,\Psi_N(y)).$

Set $w_{p_n}(y):=u_{p_n}(x)=u_{p_n}(\Psi(y)), y\in B_{2\hat{\delta}}^{+}$. By direct computation, $w_{p_n}$ satisfies
\begin{equation}\label{D12}
  \left\{
  \begin{aligned}
  &-\left(\sum_{i,j}a_{i,j}(y)(w_{p_n})_{y_{i}y_{j}}+\sum_{j}b_{j}(y)(w_{p_n})_{y_j}\right)+\lambda_n w_{p_n}=w_{p_n}^{p_n-1} \quad \mbox{in} \ B_{2\delta}^{+},\\
  & w_{p_n}=0 \quad \mbox{on} \  B_{2\delta}\cap \{y=(y',y_N):y_N=0\},
  \end{aligned}
  \right.
\end{equation}
and $m(w_{p_n})\leq 2$, where $a_{i,j}(y)=\sum_{k}(\Phi_{i})_{x_k}(\Psi(y))(\Phi_{j})_{x_k}(\Psi(y))$ and $b_{j}=(\Delta_{x}\Phi)(\Psi(y))$.

 We denote $O_n^2=((O_n^2)', O_{n,N}^2)$, where $(O_n^2)'\in\mathbb{R}^{N-1}$ and $O_{n,N}^2\in \mathbb{R}$.
By 
the definition of $\hat\gamma$, we have $\Phi(O^2)=\Phi(0)=0$,
and $\Phi(O_n^2)=((O_n^2)', l_n)$, where $l_n:=O_{n,N}^2-\hat\gamma((O_n^2)')$. 
Clearly, $|(O_n^2)'|\leq \delta$ and $0\leq l_n\leq \delta.$ 
Furthermore, we claim that
\begin{equation}\label{D16}
  \frac{l_n}{\hat{\epsilon}_n^{(2)}}\to l_0 \ \mbox{for} \ \mbox{some} \ l_0\geq 0.
\end{equation}
To establish this, we first take $M_n=(M_n',\hat\gamma(M_n'))\in\partial\Omega$ such that $dist(O_n^{2},\partial\Omega)=|O_n^{2}-M_n|$. Then, by (\ref{D14}), (i), the Mean Value theorem and the elementary inequality $(a-b)^2\ge \frac{a^2}{2}-b^2$, we obtain
\begin{equation}\label{D15}
2 L_3 \geq \frac{|O_n^{2}-M_n|}{\hat{\epsilon}_n^{(2)}}
=\frac{(|(O_n^2)'-M_n'|^2+|l_n+\hat\gamma((O_n^2)')-\hat\gamma(M_n') |^2)^{\frac{1}{2}}}{\hat{\epsilon}_n^{(2)}}\geq \frac{l_n}{2\hat{\epsilon}_n^{(2)}}
\end{equation}
for sufficiently large $n$. Hence, in view of
 $l_n,\hat{\epsilon}_n^{(2)}\geq 0$, we deduce that (\ref{D16}) holds.

We define
\begin{equation*}
  W_{p_n}(z):=(\hat{\epsilon}_n^{(1)})^{-\frac{2}{p_n-2}(1-\frac{1}{N})}(\hat{\epsilon}_{n}^{(2)})^{\frac{4}{p_{n}-2}(1-\frac{1}{2N})}w_{p_{n}}((O_n^2)'+\hat{\epsilon}_{n}^{(2)}z', \hat{\epsilon}_{n}^{(2)}z_N),~~ \forall z\in B_{\frac{\delta}{\hat{\epsilon}_n^{(2)}}}^{+}.
\end{equation*}
Clearly, $W_{p_n}$ satisfies
\begin{equation}\label{D10}
  \left\{
  \begin{aligned}
  &-\left(\sum_{i,j}\hat a_{n,i,j}(z)(W_{p_n})_{z_{i}z_{j}}+\hat{\epsilon}_n^{(2)}\sum_{j}\hat b_{n,j}(z)(W_{p_n})_{z_j}\right)+\lambda_n(\hat{\epsilon}_n^{(2)})^2 W_{p_n}=\left(\frac{\hat{\epsilon}_n^{(1)}}{\hat{\epsilon}_n^{(2)}}\right)^{2(1-\frac{1}{N})}W_{p_n}^{p_n-1} \quad \mbox{in} \ B_{\frac{\delta}{\hat{\epsilon}_n^{(2)}}}^{+},\\
  & W_{p_n}=0 \quad \mbox{on} \ B_{\frac{\delta}{\hat{\epsilon}_n^{(2)}}}\cap \{z=(z',z_N):z_N=0\},
  \end{aligned}
  \right.
\end{equation}
where $\hat a_{n, i,j}(z)=a_{i,j}((O_n^2)'+\hat{\epsilon}_n^{(2)}z', \hat{\epsilon}_n^{(2)}z_N)$
and $\hat b_{n,j}(z)=b_j((O_n^2)'+\hat{\epsilon}_n^{(2)}z',\hat{\epsilon}_n^{(2)}z_N)$.
By the definition of $w_{p_n}$ and (\ref{Y-J-0}), we deduce that
\begin{equation*}
\begin{aligned}
  W_{p_n}(z)&=(\hat{\epsilon}_n^{(1)})^{-\frac{2}{p_n-2}(1-\frac{1}{N})}(\hat{\epsilon}_{n}^{(2)})^{\frac{4}{p_{n}-2}(1-\frac{1}{2N})}u_{p_{n}}\left((O_n^2)'+\hat{\epsilon}_{n}^{(2)}z', \hat{\epsilon}_{n}^{(2)}z_N+\hat\gamma\big((O_n^2)'+\hat{\epsilon}_{n}^{(2)}z'\big)\right)\\
  &=\frac{u_{p_{n}}\left((O_n^2)'+\hat{\epsilon}_{n}^{(2)}z', \hat{\epsilon}_{n}^{(2)}z_N+\hat\gamma\big((O_n^2)'+\hat{\epsilon}_{n}^{(2)}z'\big)\right)}{u_{p_n}(O_n^2)}\\
  &\leq \left(\frac{d_n^1(O_n^2)}{d_n^1\left((O_n^2)'+\hat{\epsilon}_{n}^{(2)}z', \hat{\epsilon}_{n}^{(2)}z_N+\hat\gamma\big((O_n^2)'+\hat{\epsilon}_{n}^{(2)}z'\big)\right)}\right)^{\frac{4}{p_n-2}(1-\frac{1}{2N})}\\
  &=\left(\frac{\left|O_n^2-O_{n}^{1}\right|^2}{\left|(O_n^2)'+\hat{\epsilon}_{n}^{(2)}z'-(O_{n}^{1})'\right|^2+\left| \hat{\epsilon}_{n}^{(2)}z_N+\hat\gamma\big((O_n^2)'+\hat{\epsilon}_{n}^{(2)}z'\big)-O_{n,N}^{1}\right|^2}\right)^{\frac{2}{p_n-2}(1-\frac{1}{2N})}, ~\quad \forall z\in B_{\frac{\delta}{\hat{\epsilon}_n^{(2)}}}^{+},
  \end{aligned}
\end{equation*}
where $O_n^1=((O_{n}^{1})',O_{n,N}^{1})$ with $(O_{n}^{1})'\in\mathbb{R}^{N-1}.$ By (\ref{D15}), $\frac{\partial\hat{\gamma}}{\partial x_i}(0)=0$ and the Taylor expansion, we have
\begin{equation*}
\begin{aligned}
&\left|(O_n^2)'+\hat{\epsilon}_{n}^{(2)}z'-(O_{n}^{1})'\right|^2+\left| \hat{\epsilon}_{n}^{(2)}z_N+\hat\gamma\big((O_n^2)'+\hat{\epsilon}_{n}^{(2)}z'\big)-O_{n,N}^{1}\right|^2\\
\geq &\frac{1}{2}\left|(O_n^2)'-(O_{n}^{1})'\right|^2-(\hat{\epsilon}_{n}^{(2)})^2|z'|^2+\frac{1}{2}\left|O_{n,N}^2-O_{n,N}^{1}+ \hat{\epsilon}_{n}^{(2)}z_N+\hat\gamma'\big((O_n^2)'\big)\hat{\epsilon}_{n}^{(2)}z'\right|^2-l_n^2+o_n(1)\\
\geq &\frac{1}{2}\left|(O_n^2)'-(O_{n}^{1})'\right|^2-(\hat{\epsilon}_{n}^{(2)})^2|z'|^2+\frac{1}{4}\left|O_{n,N}^2-O_{n,N}^{1}\right|^2
-\frac{1}{2}\left|\hat{\epsilon}_{n}^{(2)}z_N+\hat\gamma'\big((O_n^2)'\big)\hat{\epsilon}_{n}^{(2)}z'\right|^2-16L_3^2(\hat{\epsilon}_{n}^{(2)})^2+o_n(1)\\
\geq &\frac{1}{4}\left|O_n^2-O_n^1\right|^2-\left(1+\left|\hat\gamma'\big((O_n^2)'\big)\right|^2\right)(\hat{\epsilon}_{n}^{(2)})^2|z|^2-16L_3^2(\hat{\epsilon}_{n}^{(2)})^2+o_n(1).
\end{aligned}
\end{equation*}
Here we have used the fact that $l_n=O_{n,N}^2-\hat\gamma\big((O_n^2)'\big)$.
Combing with the above inequalities, together with (\ref{D11}), we deduce that for any bounded subset $D^+\subset\mathbb{R}^{N}_{+}$,
\begin{equation*}
\begin{aligned}
 W_{p_n}(z)\leq &\left(\frac{\left|O_n^2-O_n^1\right|^2}{\frac{1}{4}\left|O_n^2-O_n^1\right|^2-\left(1+\left|\hat\gamma'\big((O_n^2)'\big)\right|^2\right)(\hat{\epsilon}_{n}^{(2)})^2|z|^2-16L_3^2(\hat{\epsilon}_{n}^{(2)})^2+o_n(1)}\right)^{\frac{2}{p_n-2}(1-\frac{1}{2N})}\\
 \leq & 8^{\frac{N-2}{2}(1-\frac{1}{2N})},~ \quad \forall z\in D^+
\end{aligned}
\end{equation*}
holds for sufficiently large $n$. Hence, $W_{p_n}\to W$ uniformly in $C^{1}$ on compact subsets of $\mathbb{R}^{N}_{+}$. By (\ref{D10}) and the fact that $\hat a_{n,i,j}(z)\to a_{i,j}(0)=\delta_{i,j}$, where $\delta_{i,j}=1$ if $i=j$,  $\delta_{i,j}=0$ if $i\neq j$, we find that $W$ satisfies
\begin{equation}\label{D9}
  \left\{
  \begin{aligned}
  & -\Delta W+\tilde{\lambda}W=\hat \delta W^{\frac{N+2}{N-2}} \quad \mbox{in} \ \mathbb{R}^{N}_{+},\\
  & W=0 \quad \mbox{on} \ \{z=(z',z_N)\in \mathbb{R}^N: z_N=0\},
  \end{aligned}
  \right.
\end{equation}
where $\hat{\delta}$ is defined in (\ref{D18}).

 Next, we show that $m(W)\leq 2.$ By contradiction, we assume that $m(W)\geq 3.$ We take functions $\hat{\eta}_1, \hat{\eta}_2, \hat{\eta}_3\in C^{\infty}_{0}(\mathbb{R}^{N}_{+})$ such that $supp\hat{\eta}_i\subset B_{\hat{R}_i}(\hat{Q}_{i})\subset B_{2\delta}^{+}$ and $\hat{I}_{i,j}:=B_{\hat{R}_i}(\hat{Q}_i)\cap B_{\hat{R}_j}(\hat{Q}_{j})=\emptyset, i\neq j$, where $i,j=1,2,3$ and $\hat{R}_i>0$.
Assume that the following hold
\begin{equation}\label{Y-J-9}
  \int_{\mathbb{R}^{N}_{+}}|\nabla\hat{\eta}_i|^2 dz  + \int_{\mathbb{R}^{N}_{+}}\left(\tilde{\lambda} -(2^*-1)\hat{\delta}W^{\frac{4}{N-2}} \right)\hat{\eta}_i^2dz<0, \ \forall i=1,2,3.
\end{equation}
Set
$$
\hat{\eta}_{i,n}(y):=(\hat{\epsilon}_n^{(2)})^{-\frac{N-2}{2}}\hat{\eta}_{i}\left(\frac{y'-(O_n^2)'}{\hat{\epsilon}_n^{(2)}},\frac{y_N}{\hat{\epsilon}_n^{(2)}}\right), \
~~\forall y=(y', y_N)\in \mathbb{R}^N_+.
$$
Clearly, $supp \hat{\eta}_{i,n}\subset \hat{I}_{i,n}:=B_{\hat{\epsilon}_n^{(2)}\hat{R}_i}\big((O_n^2)'+\hat{\epsilon}_n^{(2)}\hat{Q}_{i}',\hat{\epsilon}_n^{(2)}\hat{Q}_{i,N}\big)$.
Given that $|(O_n^2)'|\leq \delta$, using $\frac{\delta}{\hat{\epsilon}_n^{(2)}}\to+\infty$, it follows that
\begin{eqnarray}\label{0507-1}
B_{\hat{R}_i}(\hat{Q}_i) \subset B_{2\delta}^{+}\subset B_{\frac{2\delta}{\hat{\epsilon}_n^{(2)}}}^{+}\left(-\frac{(O_n^2)'}{\hat{\epsilon}_n^{(2)}},0\right)
\end{eqnarray}
for sufficiently large $n$.

Denote 
$$
z':=\frac{y'-(O_n^2)'}{\hat{\epsilon}_n^{(2)}},~~ z_N:=\frac{y_N}{\hat{\epsilon}_n^{(2)}}.
$$
For any $y\in \hat{I}_{i,n}$, we observe that $(z',z_N)\in B_{\hat{R}_i}(\hat{Q}_i)$.  In combination with (\ref{0507-1}), this implies that $y\in B_{2\delta}^{+}$. Consequently, for sufficiently large $n$, we have $\hat{I}_{i,n}\subset B_{2\delta}^{+}$. In addition,
by $\hat{I}_{i,j}=\emptyset$, we have $\hat{I}_{i,n}\cap \hat{I}_{j,n}=\emptyset, i\neq j$. Hence, $\hat{\eta}_{1,n}, \hat{\eta}_{2,n}, \hat{\eta}_{3,n}$ are orthogonal in $L^2(B_{2\delta}^{+})$.
Furthermore, by (\ref{D12}) and (\ref{Y-J-9}), we have
\begin{align*}
  &\int_{B_{2\delta}^{+}}-\left(\sum_{i,j}a_{i,j}(y)(\hat{\eta}_{\iota,n})_{y_{i}y_{j}}+\sum_{j}b_{j}(y)(\hat{\eta}_{\iota,n})_{y_j}\hat{\eta}_{\iota,n}\right)dy+\int_{B_{2\delta}^{+}}\left(\lambda_n-(p_n-1)w_{p_n}^{p_n-2}\right)\hat{\eta}_{\iota,n}^2dy\nonumber\\
=&\int_{B_{\hat{R}_i}(\hat{Q}_i)}-\left(\sum_{i,j}\hat a_{n,i,j}(z)(\hat{\eta}_{\iota})_{z_{i}z_{j}}+\hat{\epsilon}_n^{(2)}\sum_{j}\hat b_{n,j}(y)(\hat{\eta}_{\iota})_{z_j}\hat{\eta}_{\iota}\right)dz\nonumber\\
&~~~+\int_{B_{\hat{R}_i}(\hat{Q}_i)}\left(\lambda_n(\hat{\epsilon}_n^{(2)})^{2}-(p_n-1)\left(\frac{\hat{\epsilon}_n^{(1)}}{\hat{\epsilon}_n^{(2)}}\right)^{2(1-\frac{1}{N})}W_{p_n}^{p_n-2}\right)\hat{\eta}_{\iota}^2dz\nonumber\\
=&\int_{\mathbb{R}^{N}_{+}}-\left(\sum_{i,j}\hat a_{n,i,j}(z)(\hat{\eta}_{\iota})_{z_{i}z_{j}}+\hat{\epsilon}_n^{(2)}\sum_{j}\hat{b}_{n,j}(y)(\hat{\eta}_{\iota})_{z_j}\hat{\eta}_{\iota}\right)dz\nonumber\\
&~~~+\int_{\mathbb{R}^{N}_{+}}\left(\lambda_n(\hat{\epsilon}_n^{(2)})^{2}-(p_n-1)\left(\frac{\hat{\epsilon}_n^{(1)}}{\hat{\epsilon}_n^{(2)}}\right)^{2(1-\frac{1}{N})}W_{p_n}^{p_n-2}\right)\hat{\eta}_{\iota}^2dz\nonumber\\
\rightarrow&\int_{\mathbb{R}^{N}_{+}}|\nabla\hat{\eta}_{\iota}|^2 dz  + \int_{\mathbb{R}^{N}_{+}}\left(\tilde{\lambda} -(2^*-1)\hat\delta W^{\frac{4}{N-2}} \right)\hat{\eta}_{\iota}^2dz<0, \ \forall \iota=1,2,3.
\end{align*}
This implies that $m(w_{p_n})\geq 3$, which leads to a contradiction.
Thus, we conclude that $m(W)\leq 2$.  By similar arguments as in Lemma \ref{L-criti-blow2}, we can see that (\ref{D9}) has only the trivial solution. However, by the definition of $W_{p_n}$, we have $W_{p_n}(0,\cdots, 0, \frac{l_n}{\hat{\epsilon}_n^{(2)}})=1$. This together with (\ref{D16}) implies that  $W(0,\cdots, 0, l_0)=1$. Therefore, we obtain a contradiction.


Case (ii). $\lambda_n(\hat{\epsilon}_n^{(2)})^2\to+\infty.$
We define
\begin{equation*}
  V_{p_n}(y):=\frac{(\hat{\epsilon}_n^{(1)})^{\frac{-2}{p_n-2}(1-\frac{1}{N})}u_{p_n}(O_n^2+\lambda_n^{-\frac{1}{2}}y)}{A_n(O_n^2)},\quad \forall y\in\tilde{\Omega}_{p_n}:=\lambda_n^{\frac{1}{2}}(\Omega-O_n^2).
\end{equation*}
Then $V_{p_n}$ satisfies
\begin{equation}\label{Y-J-10}
\left\{
\begin{aligned}
  &-\Delta V_{p_n}(y)+V_{p_n}(y)=\frac{u_{p_n}^{p_n-2}(O_n^2)}{\lambda_n}V_{p_n}^{p_n-1}(y), \quad y\in\tilde{\Omega}_{p_n},\\
  &V_{p_n}(0)=1, \quad V_{p_n}(y)=0, \quad y\in\partial\tilde{\Omega}_{p_n}.
\end{aligned}
\right.
\end{equation}
By (\ref{Y-J-0}), we have 
\begin{equation}\label{Y-J-11}
  V_{p_n}(y)=\frac{u_{p_n}(O_n^2+\lambda_n^{-\frac{1}{2}}y)}{u_{p_n}(O_n^2)}\leq\left(\frac{|O_n^2-O_n^1|}{|O_{n}^2+\lambda_n^{-\frac{1}{2}}y-O_n^1|}\right)^{\frac{4}{p_n-2}(1-\frac{1}{2N})}, \ \forall y\in\tilde{\Omega}_{p_n}.
\end{equation}
By (\ref{D11}) and $\lambda_n(\hat{\epsilon}_n^{(2)})^2\to +\infty$, we deduce for any bounded subsets $D\subset\mathbb{R}^{N}$,
\begin{equation}\label{Y-J-12}
  \frac{|O_{n}^2+\lambda_n^{-\frac{1}{2}}y-O_n^1|}{|O_{n}^2-O_n^1|}\to 1
\end{equation}
holds for any $y\in D$. Then $V_{p_n}\to V_0$ in $C^{1}$ uniformly on compact subsets of $\mathbb{R}^{N}$.
Furthermore, by the definition of $A_n(O_n^2)$ and $\hat{\epsilon}_n^{(2)}$, using (\ref{D11}), we get
\begin{equation*}\label{Y-J-4}
\frac{u_{p_n}^{p_n-2}(O_n^2)}{\lambda_n}\to 0.
\end{equation*}
Combining this with (\ref{Y-J-10}), we obtain
\begin{equation*}\label{Y-J-2}
\left\{
\begin{aligned}
  &-\Delta V_0(y)+V_0(y)=0, &\quad& y\in\tilde{\Omega}_0.\\
  &V_0(0)=1, \ V_0=0, &\quad& y\in\partial\tilde{\Omega}_0,
\end{aligned}
\right.
\end{equation*}
where $\tilde{\Omega}_0$ denotes $\mathbb{R}^{N}$ or $\mathbb{R}^{N}_{L_4}:=\{x=(x_1,\cdots,x_N)\in \mathbb{R}^N: x_N>L_4\}$ with $L_4\geq 0$. 
On the other hand, by (\ref{Y-J-11})-(\ref{Y-J-12}), we deduce that $0$ is the maximum point of $V_0,$ which implies that 
\begin{equation*}
-\Delta V_0(0)+V_0(0)>1.
\end{equation*}
This produces a contradiction.
Therefore, (\ref{Y-J-3}) holds.

Now we prove (\ref{Y-J-5}). We observe that  
 $\hat{\Omega}_{p_{n}}^{2}\to \mathbb{R}^N$ or $\hat{\Omega}_{p_{n}}^{2}\to \mathbb{R}^N_{L_5}:=\{x=(x_1,\cdots,x_N)\in \mathbb{R}^N: x_N>L_5\}$ with $L_5\geq 0.$  If the latter case holds, by (\ref{Y-J-3}) and arguments analogous to those in (\ref{D6}), we obtain
\begin{equation*}
\left\{
\begin{aligned}
 &-\Delta \hat{U}_2=\hat{\delta} \hat{U}_2^{2^*-1} &\quad&  \mbox{in} \ \mathbb{R}^N_{L_5},\\
&\hat{U}_2(0)=1,\ \hat{U}_2=0 &\quad&  \mbox{on} \ \partial\mathbb{R}^N_{L_5},
\end{aligned}
\right.
\end{equation*}
where $\hat{\delta}$ is defined in (\ref{D18}).
However, according to Theorem 12 in \cite{F2007}, the above equation admits only the trivial solution. This yields a contradiction.
Therefore, we conclude that (\ref{Y-J-5}) holds.

{\it Step 2. If (\ref{9-30-5}) holds, then (\ref{D22}) holds for $k=2$.}

We assume by contradiction that (\ref{D22}) does not hold for $k=2$. Then there exists $\{O_n^3\}\subset\Omega$ such that
\begin{equation}\label{Y-J-1}
\max\limits_{x\in\Omega}\Theta_n^2(x)=\Theta_n^2(O_n^3)
=\big(d_n^2(O_n^3)\big)^{\frac{2}{p_n-2}(2-\frac{1}{N})}(\hat{\epsilon}_n^{(1)})^{-\frac{2}{p_n-2}(1-\frac{1}{N})}u_{p_n}(O_n^3)\to +\infty.
\end{equation}
Recall the definition of $A_n(x)$, we have 
$$
A_n(O_n^3)=(\hat{\epsilon}_n^{(1)})^{-\frac{2}{p_n-2}(1-\frac{1}{N})}u_{p_n}(O_n^3)\to +\infty.
$$
Denote $$
\hat{\epsilon}_{n}^{(3)}:=A_n(O_n^3)^{-\frac{p_n-2}{2(2-\frac{1}{N})}}.
$$
Using similar arguments as in Step 1, we derive
\begin{equation}\label{D17-Lapla}
\hat{\epsilon}_{n}^{(3)}\ge \hat{\epsilon}_{n}^{(1)}, \quad  \hat{\epsilon}_{n}^{(3)}\to 0,  \quad \frac{|O_n^3-O_n^i|}{\hat{\epsilon}_{n}^{(3)}}\to+\infty \quad \mbox{and} \quad  \frac{|O_n^3-O_n^i|}{\hat{\epsilon}_{n}^{(i)}}\to+\infty, \quad  \forall i\in\{1,2\}.
\end{equation}
Define
\begin{equation*}
  \hat{U}_{p_n,3}(y):=(\hat{\epsilon}_{n}^{(3)})^{\frac{4}{p_{n}-2}(1-\frac{1}{2N})}A_{n}(O_{n}^3+\hat{\epsilon}_{n}^{(3)} y), \quad \forall y\in\hat{\Omega}_{p_{n}}^{3}:=\frac{\Omega-O_{n}^3}{\hat{\epsilon}_{n}^{(3)}}.
\end{equation*}
Then $\hat{U}_{p_n,3}$ satisfies
\begin{equation*}\label{D3-Lapla}
  \left\{
  \begin{aligned}
  & -\Delta \hat{U}_{p_n,3}+\lambda_n(\hat{\epsilon}_n^{(3)})^{2} \hat{U}_{p_n,3}=\left(\frac{\hat{\epsilon}_n^{(1)}}{\hat{\epsilon}_n^{(3)}}\right)^{2(1-\frac{1}{N})}\hat{U}_{p_n,3}^{p_n-1} \quad &\mbox{in}& \  \hat{\Omega}_{p_{n}}^{3},\\
  & \hat{U}_{p_n,3}(0)=1, \quad \hat{U}_{p_n,3}=0 \quad &\mbox{on}& \  \partial\hat{\Omega}_{p_{n}}^{3}.
  \end{aligned}
  \right.
\end{equation*}
Using similar arguments as before, we derive $\lambda_n(\hat{\epsilon}_{n}^{(3)})^{2}\to 0$ and $\hat{U}_{p_n,3}\to \hat{U}_3$ in $C^{1}$ uniformly on compact subsets of $\mathbb{R}^{N}$, where $\hat{U}_3$ satisfies $m(\hat{U}_3)\leq 2$ and
\begin{equation*}\label{D19}
  \left\{
  \begin{aligned}
  & -\Delta \hat{U}_{3}=\tilde{\delta} \hat{U}_{3}^{\frac{N+2}{N-2}} &\quad &\mbox{in} \ \mathbb{R}^{N},\\
  & \hat{U}_{3}(0)=1,
  \end{aligned}
  \right.
\end{equation*}
where 
\begin{equation*}\label{D18'}
\tilde{\delta}:=\lim\limits_{n\to+\infty}\left(\frac{\hat{\epsilon}_n^{(1)}}{\hat{\epsilon}_n^{(3)}}\right)^{2(1-\frac{1}{N})}\in(0,1].
\end{equation*}
Furthermore, there exists $\hat{\phi}_3\in C^{\infty}_{0}(\mathbb{R}^{N})$ such that $supp\hat{\phi}_3\subset B_{\hat{R}_3}$, and $Q_{0,\hat{U}_3}(\hat{\phi}_3)<0$ for some $\hat{R}_3>0$.
Define $$\hat{\phi}_{n,3}(x):=(\hat{\epsilon}_{n}^{(3)})^{-\frac{N-2}{2}}\hat{\phi}_3\left(\frac{x-O_{n}^{3}}{\hat{\epsilon}_{n}^{(3)}}\right),~~ \forall x\in \mathbb{R}^N.
$$ 
It is clear that $supp\hat{\phi}_{n,3}\subset B_{\hat{R}_3\hat{\epsilon}_{n}^{(3)}}(O_n^{3})$. By similar arguments as above, we get for sufficiently large n, $\hat{\phi}_{n,3}\in C^{\infty}_{0}(\Omega)$ and $Q_{\lambda_n,u_{p_{n}}}(\hat{\phi}_{n,3})<0.$
By (\ref{D17-Lapla}), we derive that
 $\{\hat{\phi}_{n,i}\}_{i=1}^{3}$ have disjoint supports and are pairwise orthogonal in $L^{2}(\Omega).$
Hence, we deduce $m(u_{p_n})\geq 3.$ This yields a contradiction. The proof is complete.
\end{proof}

\begin{proof}[Proof of Theorem \ref{above-zero}]
By contradiction, we assume $\lambda_{n}\rightarrow +\infty.$
Consider $\hat{U}_{p_n,1}$ as defined in Lemma  \ref{L-criti-blow2} (ii) and $\hat{U}_{p_n,2}$ as defined in (\ref{D13}). We then have $\hat{U}_{p_n,i}\to \hat{U}_{i}$ in $C^1$ uniformly on compact subsets of $\mathbb{R}^{N}$, where $i\in\{1,k\}$ and $k\in\{1,2\}.$ 
It follows that for any $2<r<2^*$,
\begin{equation}\label{D39}
  \sum_{i=1}^{k}\int_{B_{\hat{R}_i}}|\hat{U}_{p_n,i}|^{r}dx\leq \sum_{i=1}^{k}\hat C^{r}\hat{R}_i^{N},
\end{equation}
where $\hat C:=\max\limits_{i\in\{1,k\}}\{\max\limits_{x\in B_{\hat{R}_i}}\hat{U}_{p_n,i}(x)\}$ and $\hat{R}_i$ is defined in the proof of Theorem \ref{L-criti-blow1}.

Set $m:=2^{*}(1-\frac{3}{8N})$. Clearly, $2<m<2^*$.
In view of $\|u_{p_n}\|^2_2 =c$, we deduce that 
\begin{equation*}\label{D32'}
\int_{\Omega}|u_{p_n}|^{m}dx\geq c^{\frac{m}{2}}|\Omega|^{1-\frac{m}{2}}.
\end{equation*}
Furthermore, since $p_n\to2^*$, for sufficiently large $n$, it is easy to very that $\frac{2m}{p_n-2}-N<0$.
Hence, by (\ref{D39}) and the fact that $\hat{\epsilon}_{n}^{(i)}\to 0,$ we have
\begin{equation}\label{D34}
  (\hat{\epsilon}_n^{(i)})^{\frac{2m}{p_n-2}-N}\int_{\Omega}|u_{p_n}|^{m}dx-\sum_{i=1}^{k}\int_{B_{\hat{R}_i}}|\hat{U}_{p_n,i}|^{m}dx\to+\infty.
\end{equation}

Next, we will divide the proof into two cases. By applying Theorem \ref{L-criti-blow1}, we will show that the left-hand side of (\ref{D34}) is bounded. This will lead to a contradiction. Hence, we conclude that $\{\lambda_n\}$ is bounded from above. 

Case (i). $k=1.$ Denote $r_{n,1}:=\hat{R}_1\hat{\epsilon}_{n}^{(1)}$. By Theorem \ref{L-criti-blow1}, we have
\begin{align*}
  &(\hat{\epsilon}_n^{(1)})^{\frac{2m}{p_n-2}-N}\int_{\Omega}|u_{p_n}|^{m}dx-\int_{B_{\hat{R}_1}}|\hat{U}_{p_n,1}|^{m}dx\\
=&(\hat{\epsilon}_n^{(1)})^{\frac{2m}{p_n-2}-N}\int_{\Omega \setminus B_{r_{n,1}}(O_n^1)} |u_{p_n}|^{m}dxdx\\
\leq&(\hat{\epsilon}_{n}^{(1)})^{\frac{2m}{p_{n}-2}-N}\int_{\Omega\setminus B_{r_{n,1}}(O_{n}^{1})}\Bigg|\frac{C(\hat{\epsilon}_{n}^{(1)})^{\frac{2}{p_{n}-2}(1-\frac{1}{N})}}{|x-O_n^1|^{\frac{2}{p_n-2}(2-\frac{1}{N})}}\Bigg|^{m}dx\\
=&C(\hat{\epsilon}_{n}^{(1)})^{-N}\int_{\Omega\setminus B_{r_{n,1}}(O_{n}^{1})}\Bigg|\frac{x-O_{n}^{1}}{\hat{\epsilon}_{n}^{(1)}}\Bigg|^{-\frac{2m}{p_n-2}(2-\frac{1}{N})}dx\\
=& C\int_{\mathbb{R}^{N}\setminus B_{\hat{R}_1}}|y|^{-\frac{2m}{p_n-2}(2-\frac{1}{N})} dy\\
\leq& C\hat{R}_1^{N-\frac{2m}{p_n-2}(2-\frac{1}{N})}.
\end{align*}
Using $p_n<2^*$ and the choose of $m$, we obtain that for sufficiently large $n$, $N-\frac{2m}{p_n-2}(2-\frac{1}{N})<0$.
Then
\begin{equation*}
  (\hat{\epsilon}_n^{(1)})^{\frac{2m}{p_n-2}-N}\int_{\Omega}|u_{p_n}|^{m}dx-\int_{B_{\hat{R}_1}}|\hat{U}_{p_n,1}|^{m}dx
  \leq C'
\end{equation*}
for some $C'>0.$ This contradicts (\ref{D34}).

Case (ii). $k=2.$  Denote $r_{n,i}:=\hat{R}_i\hat{\epsilon}_{n}^{(i)}, i\in\{1,2\}$. 
 In view of Lemma \ref{L-criti-blow2} (ii) and (\ref{Y-J-5}), we have
\begin{equation*}\label{D35}
  \frac{dist(O_{n}^i,\partial\Omega)}{\hat{\epsilon}_{n}^{(i)}}\rightarrow +\infty.
\end{equation*}
This together with (\ref{D11}), we deduce for sufficiently large $n$,
$$B_{r_{n,i}}(O_{n}^{i})\subset\Omega \quad \mbox{and} \quad B_{r_{n,i}}(O_{n}^{i})\cap  B_{r_{n,j}}(O_{n}^{j})=\emptyset, \quad  i,j\in\{1,2\},\  i\neq j.
$$
For $n$ sufficiently large such that $\frac{2m}{p_n-2}-N<0$, by the fact that $\hat{\epsilon}_n^{(1)}\leq \hat{\epsilon}_n^{(2)}$ and Theorem \ref{L-criti-blow1}, we have
\begin{align*}
&{}(\hat{\epsilon}_{n}^{(2)})^{\frac{2m}{p_n-2}-N}\int_{\Omega}|u_{p_{n}}|^{m}dx-\sum_{i=1}^{2}\int_{B_{\hat{R}_i}}|\hat{U}_{p_n,i}|^{m}dx\\
=&{}(\hat{\epsilon}_{n}^{(2)})^{\frac{2m}{p_n-2}-N}\left(\int_{B_{r_{n,1}}(O_n^1)}|u_{p_{n}}|^{m}dx+\int_{B_{r_{n,2}}(O_n^2)}|u_{p_{n}}|^{m}dx+\int_{\Omega\setminus\bigcup_{i=1}^{2}B_{r_{n,i}}(O_n^i) }|u_{p_{n}}|^{m}dx\right)\\
&{}~~~~~~~~~~~~-\int_{B_{\hat{R}_1}}|\hat{U}_{p_{n},1}|^{m}dx-\int_{B_{\hat{R}_2}}|\hat{U}_{p_{n},2}|^{m}dx\\
\leq&{} (\hat{\epsilon}_{n}^{(1)})^{\frac{2m}{p_n-2}-N}\int_{B_{r_{n,1}}(O_n^1)}|u_{p_{n}}|^{m}dx+(\hat{\epsilon}_{n}^{(2)})^{\frac{2m}{p_n-2}-N}\Bigg(\frac{\hat{\epsilon}_n^{(2)}}{\hat{\epsilon}_n^{(1)}}\Bigg)^{\frac{2m}{p_n-2}(1-\frac{1}{N})}\int_{B_{r_{n,2}}(O_n^2)}|u_{p_{n}}|^{m}dx\\
&{}~~~~~~~~~~~~+(\hat{\epsilon}_{n}^{(2)})^{\frac{2m}{p_n-2}-N}\int_{\Omega\setminus\bigcup_{i=1}^{2}B_{r_{n,i}}(O_n^i) }|u_{p_{n}}|^{m}dx
-(\hat{\epsilon}_n^{(1)})^{\frac{2m}{p_n-2}}\int_{B_{\hat{R}_1}}|u_{p_{n}}(O_{n}^1+\hat{\epsilon}_{n}^{(1)}x)|^{m} dx \\ &{}~~~~~~~~~~~~-(\hat{\epsilon}_n^{(1)})^{-\frac{2m}{p_n-2}(1-\frac{1}{N})}(\hat{\epsilon}_{n}^{(2)})^{\frac{2m}{p_{n}-2}(2-\frac{1}{N})}\int_{B_{\hat{R}_2}}|u_{p_{n}}(O_{n}^2+\hat{\epsilon}_{n}^{(2)}x)|^{m}dx \\
=&{}(\hat{\epsilon}_{n}^{(2)})^{\frac{2m}{p_n-2}-N}\int_{\Omega\setminus\bigcup_{i=1}^{2}B_{r_{n,i}}(O_n^i) }|u_{p_{n}}|^{m}dx\\
\leq&{} (\hat{\epsilon}_{n}^{(2)})^{\frac{2m}{p_n-2}-N}\int_{\Omega\setminus\bigcup_{i=1}^{2}B_{r_{n,i}}(O_n^i) }\Bigg|\frac{C(\hat{\epsilon}_{n}^{(1)})^{\frac{2}{p_n-2}(1-\frac{1}{N})}}{\min\limits_{i\in\{1,2\}}|x-O_n^i|^{\frac{2}{p_n-2}(2-\frac{1}{N})}}\Bigg|^{m}dx\\
\leq &{}C_1(\hat{\epsilon}_{n}^{(2)})^{\frac{2m}{p_n-2}-N}\sum_{i=1}^{2}\int_{\Omega\setminus\bigcup_{i=1}^{2}B_{r_{n,i}}(O_n^i) }\frac{(\hat{\epsilon}_{n}^{(i)})^{\frac{2m}{p_n-2}(1-\frac{1}{N})}}{|x-O_n^i|^{\frac{2m}{p_n-2}(2-\frac{1}{N})}}dx\\
= &{}C_1\Bigg((\hat{\epsilon}_{n}^{(2)})^{\frac{2m}{p_{n}-2}-N}\int_{\Omega\setminus\bigcup_{i=1}^{2}B_{r_{n,i}}(O_n^i) }(\hat{\epsilon}_{n}^{(1)})^{-\frac{2m}{p_n-2}}\bigg|\frac{x-O_n^1}{\hat{\epsilon}_n^{(1)}}\bigg|^{-\frac{2m}{p_n-2}(2-\frac{1}{N})}dx\\
&{}~~~~~~~~~~~~~~~+(\hat{\epsilon}_{n}^{(2)})^{\frac{2m}{p_{n}-2}-N}\int_{\Omega\setminus\bigcup_{i=1}^{2}B_{r_{n,i}}(O_n^i) }(\hat{\epsilon}_{n}^{(2)})^{-\frac{2m}{p_n-2}}\bigg|\frac{x-O_n^2}{\hat{\epsilon}_n^{(2)}}\bigg|^{-\frac{2m}{p_n-2}(2-\frac{1}{N})}dx\Bigg)\\
\leq &{}C_1\Bigg((\hat{\epsilon}_{n}^{(1)})^{\frac{2m}{p_{n}-2}-N}\int_{\Omega\setminus\bigcup_{i=1}^{2}B_{r_{n,i}}(O_n^i) }(\hat{\epsilon}_{n}^{(1)})^{-\frac{2m}{p_n-2}}\bigg|\frac{x-O_n^1}{\hat{\epsilon}_n^{(1)}}\bigg|^{-\frac{2m}{p_n-2}(2-\frac{1}{N})}dx\\
&{}~~~~~~~~~~~~~~~+(\hat{\epsilon}_{n}^{(2)})^{-N}\int_{\Omega\setminus\bigcup_{i=1}^{2}B_{r_{n,i}}(O_n^i) }\bigg|\frac{x-O_n^2}{\hat{\epsilon}_n^{(2)}}\bigg|^{-\frac{2m}{p_n-2}(2-\frac{1}{N})}dx\Bigg)\\
= &{}C_1\Bigg((\hat{\epsilon}_{n}^{(1)})^{-N}\int_{\Omega\setminus\bigcup_{i=1}^{2}B_{r_{n,i}}(O_n^i) }\bigg|\frac{x-O_n^1}{\hat{\epsilon}_n^{(1)}}\bigg|^{-\frac{2m}{p_n-2}(2-\frac{1}{N})}dx+(\hat{\epsilon}_{n}^{(2)})^{-N}\int_{\Omega\setminus\bigcup_{i=1}^{2}B_{r_{n,i}}(O_n^i) }\bigg|\frac{x-O_n^2}{\hat{\epsilon}_n^{(2)}}\bigg|^{-\frac{2m}{p_n-2}(2-\frac{1}{N})}dx\Bigg)\\
\leq  &{}C_1 \left(\int_{\mathbb{R}^{N}\setminus B_{\hat{R}_1}}|y|^{-\frac{2m}{p_n-2}(2-\frac{1}{N})} dy+ \int_{\mathbb{R}^{N}\setminus B_{\hat{R}_2}}|y|^{-\frac{2m}{p_n-2}(2-\frac{1}{N})} dy\right)\\
\leq&{}C_1\left(\hat{R}_1^{N-\frac{2m}{p_n-2}(2-\frac{1}{N})}+\hat{R}_2^{N-\frac{2m}{p_n-2}(2-\frac{1}{N})}\right).
\end{align*}
Since $N-\frac{2m}{p_n-2}(2-\frac{1}{N})<0$ for sufficiently large $n$, it follows that
\begin{equation*}
  (\hat{\epsilon}_{n}^{(2)})^{\frac{2m}{p_{n}-2}-N}\int_{\Omega}|u_{p_{n}}|^{m}dx- \sum_{i=1}^{2}\int_{B_{\hat{R}_{i}}}|\hat{U}_{p_n,i}|^{m}dx\le C''
\end{equation*}
for some $C''>0$. 
Thus, we obtain a contradiction with (\ref{D34}). The proof is complete.
\end{proof}

\section{Proof of Theorem \ref{crit-theo-Lapla}}
In this section, we complete the proof of Theorem \ref{crit-theo-Lapla}. By
(\ref{C6-Lapla}), Lemma \ref{AB-Lapla} and Theorem \ref{above-zero}, it follows that $\{u_{p_{n}}\}$ is bounded in $H_{0}^{1}(\Omega)$. Furthermore, we may assume that $\lambda_{n}\to\hat{\lambda}$ for some $\hat{\lambda}\in\mathbb{R}$.
Using standard arguments as in \cite{W1996}, we can infer the existence of a function $\hat{u}\in \mathcal{S}_{c}$ such that $u_{p_n}\rightharpoonup \hat{u}$ in $H^{1}_{0}(\Omega)$, and $\hat u$ satisfies the equation
\begin{equation}\label{4-2.1}
-\Delta \hat{u}+\hat{\lambda} \hat{u}=\hat{u}^{2^{*}-1}, \quad x\in\Omega
\end{equation}
for some $\hat{\lambda}\in\mathbb{R}.$

In what follows, we show that $u_{p_n}\to \hat{u}$ in $H^{1}_{0}(\Omega)$.
To this end, we extend $u_{p_n}$ and $\hat{u}$ by zero outside $\Omega$ and treat them as elements of $D^{1,2}(\mathbb{R}^{N})$. By Theorem 3.1 and Corollary 3.2 in \cite{T2007} (see also \cite{DS2002,T2013}), we have the following profile decomposition.
\begin{lemma}
There exist sequence $\{U_j\}_{j=1}^{k}\subset D^{1,2}(\mathbb{R}^{N})$, $\{x_{n,j}\}_{j=1}^{k}\subset\overline{\Omega}$ and $\{\sigma_{n,j}\}_{j=1}^{k}\subset(0,+\infty)$ such that
\begin{equation}\label{4-3.7}
  u_{p_n}=\hat{u}+\sum_{j=1}^{k}g_{n}^{(j)}U_{j}+r_n,
\end{equation}
where the transformation $g_n^{(j)}$ is defined by $$g_n^{(j)}u:=g_{\sigma_{n,j},x_{n,j}}u=\sigma_{n,j}^{\frac{N-2}{2}}u(\sigma_{n,j}(\cdot-x_{n,j})), \forall u\in D^{1,2}(\mathbb{R}^{N}).$$
Furthermore, one has that $(g_{n}^{(j)})^{-1}u_{p_n}\rightharpoonup U_{j}$ in $D^{1,2}(\mathbb{R}^{N})$,
$\|r_{n}\|_{L^{2^*}(\mathbb{R}^{N})}\to 0$ and $\sigma_{n,j}\to +\infty$.  
\end{lemma}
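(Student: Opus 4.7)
The plan is to apply the abstract Solimini--Tintarev profile decomposition for bounded sequences in $D^{1,2}(\mathbb{R}^{N})$ (Theorem 3.1 and Corollary 3.2 of \cite{T2007}) to $\{u_{p_n}\}$, extended by zero outside $\Omega$. The required boundedness of $\{u_{p_n}\}$ in $D^{1,2}(\mathbb{R}^{N})$ follows from Theorem \ref{above-zero} together with (\ref{C6-Lapla}) and Lemma \ref{AB-Lapla}. The abstract theorem produces profiles, centers $y_{n,j}\in\mathbb{R}^{N}$, scales $s_{n,j}>0$, and a remainder with $\|r_n\|_{L^{2^*}(\mathbb{R}^{N})}\to 0$, together with the weak convergence $(g_{s_{n,j},y_{n,j}})^{-1}u_{p_n}\rightharpoonup U_j$ and pairwise asymptotic orthogonality of the transformations. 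Three identifications remain: isolating $\hat u$ as one profile, forcing $\sigma_{n,j}\to +\infty$ for the others, and placing the centers in $\overline\Omega$.

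First, I would split off the weak limit. Since $u_{p_n}\rightharpoonup \hat u$ in $D^{1,2}(\mathbb{R}^{N})$, the component of the decomposition associated with the identity transformation equals $\hat u$; extracting it as a separate summand yields the form (\ref{4-3.7}) with the remaining profiles $U_j$, for $j=1,\ldots,k$, all carrying non-trivial transformations.

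Second, I would verify $\sigma_{n,j}\to +\infty$ for each $j\geq 1$. Asymptotic orthogonality confines each non-trivial $g_n^{(j)}$ to one of three regimes: $\sigma_{n,j}\to 0$, $\sigma_{n,j}\to +\infty$, or $\sigma_{n,j}$ bounded with centers drifting. Because $\Omega$ is bounded, the compact embedding $H_0^1(\Omega)\hookrightarrow L^r(\Omega)$ for $r\in[1,2^*)$ gives strong convergence $u_{p_n}\to \hat u$ in every such $L^r$. Consequently, the bounded-scale regime with drifting centers would make the pullback $(g_n^{(j)})^{-1}u_{p_n}$ vanish in $L^r_{loc}$, forcing $U_j\equiv 0$; similarly the $\sigma_{n,j}\to 0$ regime shrinks the pulled-back support (relative to $L^2$-testing) after translation, again yielding $U_j\equiv 0$. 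Hence only $\sigma_{n,j}\to +\infty$ is admissible for non-trivial profiles.

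Third, the centers $x_{n,j}$ may be placed in $\overline\Omega$. Since $(g_n^{(j)})^{-1}u_{p_n}$ is supported in $\sigma_{n,j}(\overline\Omega - x_{n,j})$, the non-triviality of the weak limit $U_j$ forces $\dist(x_{n,j},\overline\Omega)=o(1/\sigma_{n,j})$, so a negligible translation arranges $x_{n,j}\in\overline\Omega$ without altering any limit. Finiteness of $k$ follows from the $D^{1,2}$-energy decoupling $\|\nabla\hat u\|_2^2+\sum_{j=1}^{k}\|\nabla U_j\|_2^2 \leq \limsup_n\|\nabla u_{p_n}\|_2^2$ together with the Sobolev lower bound $\|\nabla U_j\|_2^2\geq S^{N/2}$ carried by each non-zero profile. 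The hard part will be Step 2, ruling out bounded or vanishing scales: this is exactly where the bounded domain hypothesis (via the compact subcritical embedding and the fixed support of $u_{p_n}$) replaces the more delicate concentration-compactness needed on $\mathbb{R}^{N}$, while the remaining steps are routine consequences of the cited abstract theorem.
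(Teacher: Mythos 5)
Your proposal follows the paper's approach: the paper likewise obtains the decomposition by directly invoking the abstract Solimini--Tintarev profile decomposition (Theorem 3.1 and Corollary 3.2 of \cite{T2007}) applied to $\{u_{p_n}\}$ extended by zero, with boundedness in $D^{1,2}(\mathbb{R}^N)$ coming from (\ref{C6-Lapla}), Lemma \ref{AB-Lapla} and Theorem \ref{above-zero}. The identifications you spell out---isolating $\hat u$, ruling out vanishing or bounded scales via the fixed bounded support, and placing centers in $\overline\Omega$---are the standard details the paper leaves to the reader; the one minor slip is that non-triviality of $U_j$ only forces $\dist(x_{n,j},\overline\Omega)=O(\sigma_{n,j}^{-1})$ rather than $o(\sigma_{n,j}^{-1})$, but after passing to a subsequence the resulting bounded translation merely replaces $U_j$ by a translate, leaving the conclusion intact.
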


Following Lemma 2.5 in \cite{GLW}, we obtain the following result. 
\begin{lemma}\label{Estimate1}
  Suppose that $\{u_{p_n}\}$ has a profile decomposition (\ref{4-3.7}). Then $U_j$ satisfies either the following differential inequality:
  \begin{equation}\label{4-7.6}
    \int_{\mathbb{R}^{N}}\nabla U_{j}\nabla\varphi dx\leq \int_{\mathbb{R}^{N}}U_{j}^{2^*-1}\varphi dx, \quad \forall \varphi\in C^{\infty}_{0}(\mathbb{R}^{N}), \ \varphi\geq 0,
  \end{equation}
  or there exists $L\geq 0$ such that
  \begin{equation}\label{4-7.7}
   \int_{\mathbb{R}^{N}_{L}}\nabla U_{j}\nabla\varphi dx\leq \int_{\mathbb{R}^{N}_{L}}U_{j}^{2^*-1}\varphi dx, \quad \forall \varphi\in C^{\infty}_{0}(\mathbb{R}^{N}_{L}), \ \varphi\geq 0,
  \end{equation}
  where $\mathbb{R}^{N}_{L}:=\{x=(x_{1},\ldots, x_{N})\in \mathbb{R}^N: x_{N}>-L\}$ and $U_{j}=0$ outside of the domain $\mathbb{R}^{N}_{L}$.
\end{lemma}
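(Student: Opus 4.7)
The plan is to derive the differential inequality for $U_j$ by rescaling the equation (\ref{C1-Lapla}) satisfied by $u_{p_n}$ via $(g_n^{(j)})^{-1}$, testing against $g_n^{(j)}\varphi$, and passing to the limit. I would set $v_{n,j} := (g_n^{(j)})^{-1} u_{p_n}$, so that $v_{n,j} \rightharpoonup U_j$ in $D^{1,2}(\mathbb{R}^N)$, and introduce the rescaled domain $\Omega_{n,j} := \sigma_{n,j}(\Omega - x_{n,j})$. A direct change of variables in (\ref{C1-Lapla}) shows that $v_{n,j}$ satisfies
\begin{equation*}
-\Delta v_{n,j} + \sigma_{n,j}^{-2}\lambda_n v_{n,j} = \sigma_{n,j}^{\alpha_n} v_{n,j}^{p_n-1} \quad \mbox{in } \Omega_{n,j}, \qquad v_{n,j} = 0 \ \text{on } \partial \Omega_{n,j},
\end{equation*}
where $\alpha_n := \tfrac{(N-2)(p_n - 2^*)}{2} \le 0$, so $\sigma_{n,j}^{\alpha_n} \le 1$ for large $n$. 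This inequality $\sigma_{n,j}^{\alpha_n}\le 1$ is the structural reason we will end up with an inequality rather than an equality.

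Following the dichotomy used in the proof of Theorem \ref{above-zero}, I would split into two cases according to the behavior, along a subsequence, of $\sigma_{n,j} \dist(x_{n,j}, \partial\Omega)$. If $\sigma_{n,j}\dist(x_{n,j},\partial\Omega) \to +\infty$, then for every compact $K \subset \mathbb{R}^N$ we have $K \subset \Omega_{n,j}$ for all large $n$, so every $\varphi \in C_0^\infty(\mathbb{R}^N)$ is eventually admissible and we aim at (\ref{4-7.6}). If instead $\sigma_{n,j}\dist(x_{n,j},\partial\Omega) \to L\ge 0$, then $x_{n,j}$ accumulates at $\partial\Omega$, and locally straightening $\partial\Omega$ exactly as in Case (ii)(b) of the proof of Theorem \ref{above-zero} shows that $\Omega_{n,j}$ converges to the half-space $\mathbb{R}^N_L$; the weak $D^{1,2}$-limit $U_j$ then vanishes outside $\mathbb{R}^N_L$, so I would test with $\varphi \in C_0^\infty(\mathbb{R}^N_L)$ to obtain (\ref{4-7.7}).

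Now fix $\varphi \in C_0^\infty(\mathbb{R}^N)$ (resp.\ $C_0^\infty(\mathbb{R}^N_L)$) with $\varphi \ge 0$. For $n$ large, $\supp(g_n^{(j)}\varphi) \subset \Omega$, so $g_n^{(j)}\varphi \in H^1_0(\Omega)$ is admissible in (\ref{C1-Lapla}); by scale invariance of the Dirichlet pairing the weak form reads
\begin{equation*}
\int_{\Omega_{n,j}} \nabla v_{n,j}\cdot \nabla \varphi\, dy + \sigma_{n,j}^{-2}\lambda_n \int_{\Omega_{n,j}} v_{n,j}\varphi\, dy = \sigma_{n,j}^{\alpha_n} \int_{\Omega_{n,j}} v_{n,j}^{p_n-1}\varphi\, dy.
\end{equation*}
The first term on the left tends to $\int \nabla U_j \cdot \nabla \varphi\, dy$ by weak $D^{1,2}$-convergence; the second term tends to $0$ since $\{\lambda_n\}$ is bounded (Corollary \ref{lower bound} and Theorem \ref{above-zero}), $\sigma_{n,j}\to +\infty$, and $\int v_{n,j}\varphi\,dy$ is uniformly bounded on $\supp \varphi$ by Hölder. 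For the right-hand side, extract a subsequence along which $\sigma_{n,j}^{\alpha_n}\to c \in [0,1]$; the compact embedding $D^{1,2}\hookrightarrow L^q_{\mathrm{loc}}$ for $q<2^*$ gives pointwise a.e.\ convergence $v_{n,j}\to U_j$, and the uniform $L^{2^*}$-bound together with Vitali's theorem on $\supp\varphi$ gives $\int v_{n,j}^{p_n-1}\varphi\, dy \to \int U_j^{2^*-1}\varphi\, dy$. Hence
\begin{equation*}
\int \nabla U_j\cdot \nabla \varphi\, dy = c \int U_j^{2^*-1}\varphi\, dy \le \int U_j^{2^*-1}\varphi\, dy,
\end{equation*}
which is exactly the claimed inequality in either (\ref{4-7.6}) or (\ref{4-7.7}).

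The main obstacle will be the passage to the limit in the nonlinear term $\int v_{n,j}^{p_n-1}\varphi\,dy$ when both the exponent $p_n\to 2^*$ and the base $v_{n,j}$ vary simultaneously; it requires equi-integrability of $\{v_{n,j}^{p_n-1}\}$ on $\supp\varphi$ extracted from the uniform $L^{2^*}$-bound, combined with pointwise a.e.\ convergence, executed through Vitali's theorem. A secondary technical point is the local boundary straightening needed to identify the limit domain as a half-space $\mathbb{R}^N_L$ in the boundary-concentration case, which I would import essentially verbatim from the argument already developed for Case (ii)(b) in the proof of Theorem \ref{above-zero}.
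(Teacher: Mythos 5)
Your proposal is correct and reconstructs what is surely the underlying argument: rescale $u_{p_n}$ by $(g_n^{(j)})^{-1}$, observe that the resulting profile $v_{n,j}$ solves $-\Delta v_{n,j}+\lambda_n\sigma_{n,j}^{-2}v_{n,j}=\sigma_{n,j}^{\alpha_n}v_{n,j}^{p_n-1}$ on $\Omega_{n,j}$ with $\alpha_n=\frac{(N-2)(p_n-2^*)}{2}\le 0$, then test and pass to the limit, with the inequality coming precisely from $\sigma_{n,j}^{\alpha_n}\to c\in[0,1]$ (since both $\alpha_n\to 0^-$ and $\sigma_{n,j}\to+\infty$, their interplay is not controlled), and the dichotomy full-space/half-space governed by whether $\sigma_{n,j}\,\dist(x_{n,j},\partial\Omega)$ diverges or converges. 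The paper itself gives no proof here, simply citing Lemma~2.5 of [GLW], so there is nothing to compare against beyond confirming that your argument is the expected standard one and that the two delicate points you flag (uniform integrability of $v_{n,j}^{p_n-1}$ via Vitali, and the local convergence $\Omega_{n,j}\to\mathbb{R}^N_L$ in the boundary-concentration case) are handled correctly.
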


By applying similar arguments as in Lemma 2.7 of \cite{GLW}, we deduce the following decay estimate for $U_j$.
\begin{lemma}\label{L^2}
Suppose that $U_j$ satisfies the inequality (\ref{4-7.6}) or (\ref{4-7.7}). Then there exists a constant $\tilde C_0$ such that
\begin{equation*}
U_j(x)\leq \frac{\tilde C_0}{(1+|x|^2)^{\frac{N-2}{2}}}.
\end{equation*}
\end{lemma}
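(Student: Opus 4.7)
The plan is to derive the pointwise decay estimate via a two-step argument: Moser iteration for local $L^\infty$ bounds, followed by a Kelvin transform to convert decay at infinity into boundedness near the origin. This is the classical strategy for subsolutions of the critical semilinear equation on $\mathbb{R}^N$.

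First, I would establish that $U_j \in L^\infty_{loc}$ by Moser iteration applied directly to (\ref{4-7.6}) or (\ref{4-7.7}). Fix a ball $B_R$ and a cutoff $\eta \in C_0^\infty(B_R)$. For parameters $M, \beta > 0$ set $U_M := \min(U_j, M)$ and insert the non-negative test function $\varphi = \eta^2 U_M^{2\beta} U_j$ into the subsolution inequality. Since $U_j \in D^{1,2}(\mathbb{R}^N) \hookrightarrow L^{2^*}(\mathbb{R}^N)$, the resulting energy estimate combined with the Sobolev embedding yields a reverse Hölder inequality of the form
\begin{equation*}
\|U_j\|_{L^{q_{k+1}}(B_{r_{k+1}})} \leq C_k \|U_j\|_{L^{q_k}(B_{r_k})}, \qquad q_{k+1}/q_k = 2^*/2,
\end{equation*}
on a shrinking sequence of radii. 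Iterating in the standard De Giorgi--Nash--Moser fashion produces $\|U_j\|_{L^\infty(B_{R/2})} \leq C(R)$, with the constant depending only on $\|U_j\|_{L^{2^*}}$. For the half-space case, the test function extends trivially by zero across $\partial \mathbb{R}^N_L$, so the same iteration applies.

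Next, I would apply the Kelvin transform $\tilde U_j(y) := |y|^{2-N} U_j(y/|y|^2)$. By the conformal invariance of the critical Laplacian, $\tilde U_j$ satisfies the analogous subsolution inequality in a punctured neighborhood of the origin (in the half-space case, inversion maps $\mathbb{R}^N_L$ into a region whose trace near $0$ is again well-suited to compactly supported non-negative test functions). Since $U_j \in D^{1,2}(\mathbb{R}^N)$ implies $\tilde U_j \in D^{1,2}$ near $0$, the iteration from the first step applies to $\tilde U_j$ on a small ball, yielding $\tilde U_j(y) \leq C$ for $|y| \leq 1$. Translating back gives $U_j(x) \leq C |x|^{2-N}$ for $|x| \geq 1$, which, combined with the uniform $L^\infty$ bound on $B_2$, produces the stated inequality $U_j(x) \leq \tilde C_0 (1 + |x|^2)^{-(N-2)/2}$.

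The main obstacle I anticipate is the bookkeeping in the Moser iteration in the half-space setting, especially checking that the Kelvin-transformed function genuinely inherits the subsolution inequality against the required class of test functions near the origin, with the appropriate boundary condition preserved under inversion. A secondary technical point is verifying that the isolated singularity introduced by the Kelvin inversion at the origin is removable for $\tilde U_j$, which follows from its membership in $D^{1,2}$ locally together with the bound just obtained.
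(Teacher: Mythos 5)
Your strategy---local $L^\infty$ bounds for the subsolution inequality, followed by a Kelvin transform to convert decay at infinity into boundedness near the origin---is the classical semilinear route, and it is essentially the argument the paper is invoking: the paper does not prove the lemma in-line but points to Lemma 2.7 of \cite{GLW}, whose arguments specialize to exactly this Kelvin-transform-plus-local-estimate scheme when the operator is the Laplacian, with the half-space case handled as you indicate by extending the nonnegative subsolution by zero (valid because the extension remains a weak subsolution against nonnegative test functions).

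One caveat worth making explicit: because the right-hand side carries the critical exponent, the iteration with $\varphi=\eta^2 U_M^{2\beta}U_j$ leads to a reverse H\"older inequality only after absorbing the borderline term $\int \eta^2 U_j^{2^*-2}(U_M^{\beta}U_j)^2$, and this absorption does \emph{not} come for free from finiteness of $\|U_j\|_{L^{2^*}}$---the H\"older/Sobolev pairing is scale-invariant, so the constant cannot be controlled by that norm alone. One needs either (a) smallness of $\|U_j\|_{L^{2^*}(B_\rho)}$, which after the Kelvin transform is available near the origin by shrinking $\rho$ (since $\tilde U_j\in L^{2^*}$), or (b) a Brezis--Kato splitting of $U_j^{2^*-2}$ into a small $L^{N/2}$ piece plus an $L^\infty$ piece, bootstrapping through $L^q$ for all finite $q$ before invoking standard Moser with a supercritical coefficient. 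Your phrasing that the constant ``depends only on $\|U_j\|_{L^{2^*}}$'' glosses over this; once corrected, the proof is sound and matches the intended argument.
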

Without loss of generality, we may assume that $\sigma_{n,1}=\inf\limits_{j=\{1,\ldots,k\}}\{\sigma_{n,k}\}$. For simplicity, we 
denote $\sigma_{n}=\sigma_{n,1}$ and $x_{n}=x_{n,1}$. Define $D_r(x):=B_r(x)\cap \Omega$.
Set $D_{n}:=D_{4\sigma_n^{-\frac{1}{2}}}(x_n)$.
We define a cut-off function $\phi_n\in C^{\infty}_{0}(\mathbb{R}^{N})$ such that $|\nabla \phi_n|\leq \frac{2}{3}\sigma_n^{\frac{1}{2}}$ and
\begin{equation*}
  \phi_n(x):=
  \left\{
  \begin{aligned}
  &1, &\quad& |x-x_n|\leq \sigma_n^{-\frac{1}{2}},\\
  &0, &\quad& |x-x_n|\geq 4\sigma_n^{-\frac{1}{2}}.
  \end{aligned}
  \right.
\end{equation*}

\begin{proof}[Completion of the proof of Theorem \ref{crit-theo-Lapla}]
It suffices to prove
\begin{equation}\label{S-1-Lapla}
u_{p_n} \to \hat{u} \quad \mbox{in} \ L^{2^*}(\Omega).
\end{equation}
By contradiction, we assume that there exists at least one $U_j\neq 0$ in (\ref{4-3.7}). Without loss of generality, we assume $U_1\neq 0$.
We now consider the following three cases:

Case (i). $x_n\to \hat{x}\in\Omega$. Since $\sigma_n\to+\infty$, it follows that $\sigma_n dist(x_n,\partial\Omega)\to+\infty$, and thus $\Omega_n:=\sigma_n(\Omega-x_n)\to \mathbb{R}^N.$
By Lemma \ref{L^2},  we have $U_1\in L^{2^*}(\mathbb{R}^N)$.
Given that $U_1\neq 0$, we can choose $L_1>0$ large enough such that 
\begin{equation*}\label{positive}
\int_{B_{L_1}}U_1^{2^*} dx >C_1>0
\end{equation*}
for some $C_1>0.$  Again, by Lemma \ref{L^2}, we have $U_1\leq \tilde C_0$. Taking $\frac{N}{N-2}<p<2^*$, we obtain
\begin{equation}\label{positive}
C_1<\int_{B_{L_1}}U_1^{2^*} dx\leq \tilde C_0^{2^*-p}\int_{B_{L_1}}U_1^{p} dx.
\end{equation}
Next, we show that for sufficiently large $n\in\mathbb{N}$,
\begin{equation}\label{S-2-Lapla}
  \int_{D_n} u_{p_n}^p \phi_n dx\geq  C_2\sigma_n^{\frac{p(N-2)-2N}{2}}
\end{equation}
for some $C_2>0$.
Indeed, by setting $D_n^{'}=D_{L_1\sigma_n^{-1}}(x_n)$, for sufficiently large $n$, we have $D_n^{'}\subset D_{\sigma_n^{-\frac{1}{2}}}(x_n)\subset D_n$.
By (\ref{4-3.7}) and the definition of $\phi_n$, we get
\begin{align*}
  \int_{D_n} u_{p_n}^p \phi_n dx
  &\geq  \int_{D_n^{'}} u_{p_n}^p dx\\
  &\geq \int_{D_n^{'}} \hat{u}^p dx+\int_{D_n^{'}}\left(g_{n}^{(1)}U_1\right)^p dx+\int_{D_n^{'}} \Big(\sum_{j\geq 2}^{k}g_{n}^{(j)}U_j\Big)^p dx+\int_{D_n^{'}} r_{n}^p dx\\
  &~~~+\sum_{i=0}^{p}\int_{D_n^{'}} \Big(\hat{u}+\sum_{j=1}^{k}g_{n}^{(j)}U_{j}\Big)^{p-i}r_n^i dx\\
  &\geq \int_{D_n^{'}}\Big(g_{n}^{(1)}U_1\Big)^p dx-\int_{D_n^{'}} \hat{u}^p dx-\int_{D_n^{'}} |r_{n}|^p dx-\sum_{i=0}^{p}\int_{D_n^{'}} \Big|\hat{u}+\sum_{j=1}^{k}g_{n}^{(j)}U_{j}\Big|^{p-i}|r_n|^i dx \nonumber\\
  &:=I_1+I_2+I_3+I_4.
\end{align*}
Here we have used that $\hat{u}$ and $g_{n}^{(j)}U_j$ are nonnegative.

For $I_1$, by the definition of $D_n^{'}$ and $g_{n}^{(1)}U_1$, we have
\begin{align*}
 I_1
= \int_{B_{L_1\sigma_n^{-1}}(x_n)\cap\Omega}\sigma_n^{\frac{N-2}{2}p} U_{1}^p(\sigma_n(x-x_n))dx
=\sigma_n^{\frac{p(N-2)-2N}{2}}\int_{B_{L_1}\cap\Omega_n }U_1^p(y)dy.
\end{align*}
Since $\Omega_n\to\mathbb{R}^{N}$, for large $n$, we have $B_{L_1}\subset\Omega_n$. 
Together with (\ref{positive}), we get
\begin{equation}\label{es-1}
  I_1=\sigma_n^{\frac{p(N-2)-2N}{2}}\int_{B_{L_1}}U_1^p(y)dy\geq \tilde C_0^{p-2^*}C_1\sigma_n^{\frac{p(N-2)-2N}{2}}.
\end{equation}
For $I_2$, since $\{\|u_{p_n}\|_{\infty}\}$ is uniformly bounded and $u_{p_n}\to \hat{u}$ a.e. in $\Omega$, we deduce
\begin{equation}\label{es-2}
I_2\geq -\|\hat{u}\|^p_{\infty}|D_n^{'}|\geq -\|\hat{u}\|^p_{\infty}L_1^{N}\sigma_n^{-N}.
\end{equation}
For $I_3$, by the H\"{o}lder inequality and $\|r_n\|_{L^{2^*}(\mathbb{R}^{N})}\to 0$, it follows that
\begin{align}\label{es-3}
I_3\geq -\left(\int_{D_n^{'}} r_{n}^{2^*} dx\right)^{\frac{p}{2^*}}\left(\int_{D_n^{'}} 1 dx\right)^{\frac{2^*-p}{2^*}}\geq -\left(\int_{D_n^{'}} r_{n}^{2^*} dx\right)^{\frac{p}{2^*}}(L_1^{N}\sigma_n^{-N})^{\frac{2N-p(N-2)}{2N}}=o_n(1)\sigma_n^{\frac{p(N-2)-2N}{2}}.
\end{align}
For $I_4$, by the definition of $g_{n}^{(j)}U_j$, we have 
\begin{align}
I_4
=-\sum_{i=0}^{p}\int_{D_n^{'}}\left|u_{p_n}-r_n\right|^{p-i}|r_n|^i dx
& \geq -\sum_{i=0}^{p}\int_{D_n^{'}} \left(|u_{p_n}|+|r_n|\right)^{p-i}|r_n|^i dx\nonumber\\
&\geq -\sum_{i=0}^{p} 2^{p} \|u_{p_n}\|_{\infty}^{p-i}\left(\int_{D_n^{'}} r_{n}^{2^*} dx\right)^{\frac{i}{2^*}}\left(\int_{D_n^{'}} 1 dx\right)^{\frac{2^*-i}{2^*}}-o_n(1)\sigma_n^{\frac{p(N-2)-2N}{2}}\nonumber\\
&\ge -\sum_{i=0}^{p} 2^{p} \|u_{p_n}\|_{\infty}^{p-i}o_n(1)\sigma_n^{\frac{p(N-2)-2N}{2}}-o_n(1)\sigma_n^{\frac{p(N-2)-2N}{2}}.\label{es-4}
\end{align}
Thus, by (\ref{es-1})-(\ref{es-4}), we obtain \eqref{S-2-Lapla}. 

On the other hand, by the definition of $\phi_n$, we obtain
\begin{equation*}
  \int_{D_n} u_{p_n}^p \phi_n dx\leq \|u_{p_n}\|^p_{\infty}\big|B_{4\sigma_n^{-\frac{1}{2}}}(x_n)\big|\leq \|u_{p_n}\|^p_{\infty}4^N\sigma_n^{-\frac{N}{2}}.
\end{equation*}
This contradicts (\ref{S-2-Lapla}) for sufficiently large $n$.
Therefore, we conclude that $u_{p_n}=\hat{u}+r_n$ and (\ref{S-1-Lapla}) holds.

Case (ii). $x_n\to \hat{x}\in\partial\Omega$ and $\sigma_n dist(x_n,\partial\Omega)\to+\infty$. In this case, we have $\Omega_n\to \mathbb{R}^N.$ The arguments follow similarly to Case (i).

Case (iii). $x_n\to \hat{x}\in\partial\Omega$ and $\sigma_n dist(x_n,\partial\Omega)\to d\geq 0$. Clearly, $\Omega_n\to \mathbb{R}^{N}_{d}$, where $\mathbb{R}^{N}_{d}$ is defined in Lemma \ref{Estimate1}. 
As before,  we denote $D_{r,d}(x):=B_{r,d}(x)\cap \Omega$, where $B_{r,d}(x):=B_r(x)\cap \mathbb{R}^{N}_{d}$.
Set $D_{n,d}:=D_{4\sigma_n^{-\frac{1}{2}},d}(x_n)$.
We define a cut-off function $\tilde{\phi}_n\in C^{\infty}_{0}(\mathbb{R}^{N}_{d})$ such that $|\nabla \tilde{\phi}_n|\leq \frac{2}{3}\sigma_n^{\frac{1}{2}}$ and
\begin{equation*}
  \tilde{\phi}_n(x):=
  \left\{
  \begin{aligned}
  &1, &\quad& |x-x_n|\leq \sigma_n^{-\frac{1}{2}},\\
  &0, &\quad& |x-x_n|\geq 4\sigma_n^{-\frac{1}{2}}.
  \end{aligned}
  \right.
\end{equation*}
Using (\ref{4-7.7}) and Lemma \ref{L^2}, we deduce that $U_1\in L^{2^*}(\mathbb{R}^{N}_{d})$.
By arguments similar to those in Case (i), we can prove that for sufficiently large $n\in\mathbb{N}$,
\begin{equation*}\label{es-5}
  \int_{D_{n,d}} u_{p_n}^p \tilde{\phi}_n dx\geq  C_3\sigma_n^{\frac{p(N-2)-2N}{2}}
\end{equation*}
for some $C_3>0$, where $p$ is chosen as in Case (i). On the other hand, we have
\begin{equation*}\label{es-6}
  \int_{D_{n,d}} u_{p_n}^p \tilde{\phi}_n dx\leq \|u_{p_n}\|^p_{\infty}4^N\sigma_n^{-\frac{N}{2}}.
\end{equation*}
Combining these two inequalities, we derive a contradiction since $\sigma_n\to+\infty$, which implies that (\ref{S-1-Lapla}) holds. Together with (\ref{C1-Lapla}) and (\ref{4-2.1}), we get
\begin{equation}\label{D23}
  u_{p_n}\to \hat{u} \quad  \mbox{in} \quad  H^{1}_{0}(\Omega).
\end{equation}
Furthermore, by $u_{p_n}>0$ and the strong maximum principle, we deduce $\hat{u}>0.$


Finally, we prove that $\hat{u}\neq u_1^*$. By (\ref{D23}) and the definition of $c_{1,p_n}$, for any given $n\in \mathbb{Z}^+$ and sequence $\{\epsilon_n\}$ with $\epsilon_n\to0$, there exists $\gamma_{p_n}\in\Gamma$ such that
\begin{equation}\label{S-5-Lapla} 
   J(\hat{u})+o_n(1)=J_{p_n}(u_{p_n})=c_{1,p_n}\geq J_{p_n}(\gamma_{p_n}(t_n))-\epsilon_n,
\end{equation}
where $t_n$ satisfies $J_{p_n}(\gamma_{p_n}(t_n))=\max\limits_{t\in[0,1]}J_{p_n}(\gamma_{p_n}(t)).$  By Lemma \ref{MP-La-1} and Lemma \ref{AB-Lapla}, we have $c_{1,2^*}\ge 2c\lambda_1(\Omega)$. Then, using (\ref{D30}) and $p_n\to 2^*$, we derive
\begin{align}
  J_{p_n}(\gamma_{p_n}(t_n))-\epsilon_n&=J(\gamma_{p_n}(t_{n,*}))+J_{p_n}(\gamma_{p_n}(t_n))-J_{p_n}(\gamma_{p_n}(t_{n,*}))-J(\gamma_{p_n}(t_{n,*}))+J_{p_n}(\gamma_{p_n}(t_{n,*}))-\epsilon_n\nonumber\\
  &\geq J(\gamma_{p_n}(t_{n,*}))+\frac{1}{2^*}\int_{\Omega}|\gamma_{p_n}(t_{n,*})|^{2^*}dx-\frac{1}{p_n}\int_{\Omega}|\gamma_{p_n}(t_{n,*})|^{p_n}dx-\epsilon_n\nonumber\\
  &\geq c_{1,2^*}+o_n(1)-\epsilon_n> \frac{3}{2}c\lambda_1(\Omega)>c\lambda_1(\Omega)\geq J(u_1^*),\label{S-6-Lapla}
\end{align}
where $t_{n,*}$ satisfies $J(\gamma_{p_n}(t_{n,*}))=\max\limits_{t\in[0,1]}J(\gamma_{p_n}(t))$. By (\ref{S-5-Lapla})-(\ref{S-6-Lapla}), we obtain $J(\hat{u})>J(u_1^*)$, which implies that $\hat{u}\neq u_{1}^{*}$.
Hence, $\hat{u}$ is a second positive normalized solution at the mountain pass level to problem (\ref{1.1-Lapla}) for $\hat{\lambda}\in\mathbb{R}$. This completes the proof.
\end{proof}

\section*{Acknowledgements}
This work is partially supported by the NSFC (12471102), NSF of Jilin Province (20250102004JC),
Research Project of the Education Department of Jilin Province (JJKH20250296KJ), and Beijing
Natural Science Foundation (No. 1242007).

\section*{Data availability}
 Data sharing not applicable to this article as no datasets were generated or analysed during
the current study.

\section*{Declarations}
Conflict of interest The authors declare that they have no conflict of interest.

{\small
}

\begin{thebibliography}{99}







\bibitem{Ag2013}
G. P. Agrawal, Nonlinear fiber optics, 5th ed., Academic, Oxford, 2013.

\bibitem{AJM2022}
C.~O. Alves, C.~Ji and O.~H.~Miyagaki, Normalized solutions for a Schr\"{o}dinger equation with critical growth in $\mathbb{R}^{N},$ {\it  Calc. Var. Partial Differential Equations} {\bf 61} (2022), no. 1, Paper No. 18, 24 pp.




\bibitem{BC2013}
W.~ Bao and Y. Cai, Mathmatical theory and numerical methods for Bose-Einstein condensation, {\it Kinet. Relat. Models} {\bf 6} (2013), 1-135.







\bibitem{BP2014}
T. Bartsch and M. Parnet, Nonlinear Schr\"odinger equations near an infinite well potential, {\it Calc. Var. Partial Differential Equations} {\bf 51} (2014), no. 1-2, 363-379.

\bibitem{BQZ}
T.~Bartsch, S. J. Qi and W. M. Zou, Normalized solutions to Sch\"odinger equations with potential and inhomogeneous nonlinearities on large convex domains,  {\it Math. Ann. } {\bf 390} (2024), no. 3, 4813-4859. 


\bibitem{BS1} T.~Bartsch and N.~Soave, A natural constraint approach to normalized solutions of nonlinear Schr\"{o}dinger equations and systems, {\it J. Funct. Anal.} {\bf 272} (2017), no. 12, 4998-5037.



\bibitem{B2002}
W.~Beckner, Asymptotic estimates for Gagliardo-Nirenberg embedding constants, {\it Potential Anal.} {\bf 17} (2002), no.3, 253-266.



\bibitem{BM} B.~Bieganowski and J.~Mederski, Normalized ground states of the nonlinear Schr\"{o}dinger equation with at least mass critical growth, {\it J. Funct. Anal.} {\bf 280} (2021), no. 11, Paper No. 108989, 26 pp.




\bibitem{BCJS} J.~Borthwick, X.~J.~Chang, L.~Jeanjean and N.~Soave, Normalized solutions of $L^{2}$- supercritical NLS equations on noncompact metric with localized nonlinearties, {\it Nonlinearity} {\bf 36} (2023), no.7, 3776-3795.

\bibitem{BCJS1} J.~Borthwick, X.~J.~Chang, L.~Jeanjean and N.~Soave, Bounded Palais-Smale sequences with Morse type information for some constrained functionals, {\it Trans. Amer. Math. Soc.} {\bf 377} (2024), no. 6, 4481-4517.










\bibitem{Caze82} T.~Cazenave and P.-L.~Lions, Orbital stability of standing waves for some nonlinear Schr\"odinger equations, {\it Comm. Math. Phys.} {\bf 85} (1982), no. 4, 549-561.

\bibitem{CJS2024} X.~J.~Chang, L.~Jeanjean and N.~Soave, Normalized solutions of $L^{2}$-supercritical NLS equations on compact metric graphs, {\it Ann. Inst. H. Poincar\'{e} Anal. Non Lin\'{e}aire}  {\bf 41} (2024), no. 4, 933-959.

\bibitem{CLY2023} X.~J.~Chang, M.~T.~Liu and D.~K.~Yan, Normalized ground state solutions of nonlinear Schr\"odinger equations involving exponential critical growth,
{\it J. Geom. Anal.} {\bf 33} (2023), no. 3, Paper No. 83, 20 pp.


\bibitem{CRZ2025} X.~J.~Chang, V. D. R$\check{a}$dulescu and Y. X. Zhang, Solutions with prescribed mass for $L^2$-supercritical NLS
equations under Neumann boundary conditions, {\it Proc. Roy. Soc. Edinburgh Sect. A.}  Online. DOI: 10.1017/prm.2025.24.

\bibitem{CT2024}
S. T. Chen and X. H. Tang,
Another look at Schr\"odinger equations with prescribed mass,
{\it J. Differential Equations} {\bf 386} (2024), 435-479.



\bibitem{CCV2024}
M. Cirant, A. Cosenza and G. Verzini, Ergodic mean field games: existence of local minimizers up to the Sobolev critical case, {\it Calc. Var. Partial Differential Equations} {\bf 63} (2024), no. 5, Paper No. 134, 23 pp.

\bibitem{DS2002} G. Devillanova and S. Solimini, Concentration estimates and multiple solutions to elliptic problems at critical growth, {\it Adv. Differential Equations} {\bf 7} (2002), no. 10, 1257-1280.


\bibitem{EP} P.~Esposito and M.~Petralla, Pointwise blow-up phenomena for a Dirichlet problem, {\it Comm. Partial Differential Equations} {\bf 36} (2011), no. 9, 1654-1682.


\bibitem{EL1982}
M. J. Esteban and P.-L. Lions, Existence and nonexistence results for semilinear elliptic problems in
unbounded domains, {\it Proc. Roy. Soc. Edinburgh Sect. A} {\bf 93} (1982), no. 1-2, 1-14.


\bibitem{F2007} A.~Farina, On the classification of solutions of the Lane-Emden equation on unbounded domains of $\mathbb{R}^{N}$, {\it J. Math. Pures Appl.} {\bf 87} (2007), no. 5, 537-561.


\bibitem{FM2001}
G. Fibich and F. Merle, Self-focusing on bounded domains,  {\it Phys. D } {\bf 155} (2001), no. 1-2,  132-158.

\bibitem{GS}
B.~Gidas, J.~Spruck, A priori bounds for positive solutions of nonlinear elliptic equations, {\it Comm. Partial Differential Equations} {\bf 6} (1981),  no. 8, 883-901.



\bibitem{GLW} Y.~X.~Guo, J.~Q.~Liu and Z.-Q.~Wang,
On a Brezis-Nirenberg type quasilinear problem,
{\it J. Fixed Point Theory Appl.} {\bf 19} (2017), no. 1, 719-753.



\bibitem{IT2019}
N. Ikoma and K. Tanaka, A note on deformation argument for $L^{2}$ normalized solutions of nonlinear Schr\"{o}dinger equations and systems, {\it Adv. Differential Equations} {\bf 24} (2019), 609-646.

\bibitem{J1999}
L.~Jeanjean, On the existence of bounded {P}alais-{S}male sequences
and application to a {L}andesman-{L}azer-type problem set on
$\mathbb{R}^N$,
{\it  Proc. Roy. Soc. Edinburgh Sect. A} {\bf 129} (1999),
 787-809.

\bibitem{J} L.~Jeanjean, Existence of solutions with prescribed norm for semilinear elliptic equations, {\it Nonlinear Anal.} {\bf 28} (1997), no. 10, 1633-1659.


\bibitem{JJLV2022}
L.~Jeanjean, J.~Jendrej, T.~T.~Le and N.~Visciglia, Orbital stability of ground states for a Sobolev critical Schr\"odinger equation, {\it J. Math. Pures Appl.} {\bf 164} (2022), 158-179.

\bibitem{JL2022}
L.~Jeanjean and T.~T.~Le, Multiple normalized solutions for a Sobolev critical Schr\"odinger equations, {\it Math. Ann.} {\bf 384} (2022), 101-134.

\bibitem{JL} L.~Jeanjean and S.~S.~Lu, A mass supercritical problem revisited, {\it Calc. Var. Partial Differential Equations} {\bf 59} (2020), no. 5, Paper No. 174, 43 pp.



\bibitem{L2021} X.~F.~Li, Existence of normalized ground states for the Sobolev critical Schr\"{o}dinger equation with combined nonlinearities, {\it Calc. Var. Partial Differential Equations} {\bf 60} (2021), no. 5, Paper No. 169, 14 pp. 



\bibitem{MS2022}
J.~Mederski and J. Schino, Least energy solutions to a cooperative system of Schr\"odinger equations with prescribed $L^2$-bounds; at least $L^2$-critical growth, {\it Calc. Var. Partial Differential Equations} {\bf 61} (2022), no. 1, Paper No. 10, 31 pp.

\bibitem{N-1966}
 L. Nirenberg, An extended interpolation inequality,
 {\it Ann. Scuola Norm. Sup. Pisa Cl. Sci. (3)}
{\bf 20}, (1966), 733-737.


\bibitem {NTV} B.~Noris, H.~Tavares and G.~Verzini, Existence and orbital stability of the ground states with prescribed mass for the $L^{2}$-critical and supercritical NLS on bounded domains, {\it Anal. PDE} {\bf 7} (2014), no. 8, 1807-1838.

\bibitem{NTV2015}
B.~Noris, H.~Tavares and G.~Verzini,
Stable solitary waves with prescribed $L^2$-mass for the cubic Schr\"odinger system with trapping potentials,
{\it Discrete Contin. Dyn. Syst.} {\bf 35} (2015), no. 12, 6085-6112.

 \bibitem{NTV2019}
B.~Noris, H.~Tavares and G.~Verzini, Normalized solutions for nonlinear Schr\"odinger systems on bounded
domains, {\it Nonlinearity} {\bf 32} (2019), no. 3, 1044-1072.



\bibitem{PPVV2021}
B. Pellacci, A. Pistoia, G. Vaira and G. Verzini, Normalized concentrating solutions to nonlinear elliptic problems, {\it J. Differential Equations} {\bf 275} (2021), 882-919.

\bibitem{P2010}
M. Petralla, Asymptotic analysis for a singularly perturbed Dirichlet problem, Ph.D. thesis, University of Rome 3, Rome, Italy, 2010.

\bibitem{P2013}
M. Petralla, Morse index and critical/super-critical Dirichlet problems with a large parameter, {\it Nonlinear Anal.} {\bf 76} (2013), 14-40.

\bibitem{PV}  D.~Pierotti and G.~Verzini, Normalized bound states for the nonlinear Schr\"{o}dinger equation in bounded domains, {\it Calc. Var. Partial Differential Equations} {\bf 56} (2017), no. 5, Paper No. 133, 27 pp.

\bibitem{PVY} D.~Pierotti, G.~Verzini and J. W.~Y, Normalized solutions for Sobolev critical Schr\"{o}dinger equations on bounded domains, {\it  SIAM J. Math. Anal.}
{\bf 57} (2025), no. 1, 262-285.





\bibitem{S1} N.~Soave, Normalized ground states for the NLS equation with combined nonlinearities, {\it J. Differential Equations} {\bf 269} (2020), no. 9, 6941-6987.

\bibitem{S2} N.~Soave, Normalized ground states for the NLS equation with combined nonlinearities: the Sobolev critical case, {\it J. Funct. Anal.} {\bf 279} (2020), no. 6, 108610, 43 pp.



\bibitem{SZ2024}
L.~Song and W.~Zou, Two positive normalized solutions on star-shaped bounded domains to the Brezis-Nirenberg problem, I: Existence, arXiv:2404.11204v1, 2024.



\bibitem{Str1-1988}
M. Struwe, Critical points of embeddings of $H_0^{1,n}$ into Orlicz spaces, {\it Ann. Inst. H. Poincar\'e
Anal. Non Lin\'eaire} {\bf 5} (1988), 425-464.

\bibitem{Str2-1988}
M. Struwe, The existence of surfaces of constant mean curvature with free boundaries, {\it Acta
Math.} {\bf 160} (1988), 19-64.

\bibitem{Str3-1988}
M. Struwe, Une estimation asymptotique pour le mod\'ele de Ginzburg-Landau,  {\it  C. R. Acad.
Sci. Paris S\'er. I Math.} {\bf 317}(1993), 677-680.

\bibitem{T2013}
C. Tintarev, Concentration analysis and compactness: Concentration analysis and applications to PDE, 117-141, Trends Math., Birkh\"auser/Springer, Basel, 2013.

\bibitem{T2007}
C. Tintarev and K.H. Fineseler, Concentration compactness, functional analytic
 grounds and applications, London, Imperial College Press, 2007.


\bibitem{WC2024}
Q. Wang and X. J. Chang, Normalized solutions of $L^2$-supercritical Kirchhoff equations in bounded domains, {\it J. Geom. Anal.} {\bf 34} (2024), no. 12, Paper No. 358, 30 pp.

\bibitem{WW2022}
J.~C.~Wei and Y.~Z.~Wu, Normalized solutions for Schr\"{o}dinger equations with critical sobolev exponent and mixed nonlinearities, {\it J. Funct. Anal.} {\bf 283} (2022), no. 6, Paper No. 109574, 46 pp.

\bibitem{W1996} M.~Willem, Minimax theorems, Boston, Birkh\"auser, 1996.





\end{thebibliography}
\end{document}